   \newtheorem{theorem}{Theorem}[section]
   \newtheorem{proposition}[theorem]{Proposition}
   \newtheorem{lemma}[theorem]{Lemma}
   \newtheorem{corollary}[theorem]{Corollary}
\theoremstyle{definition}
   \newtheorem{definition}[theorem]{Definition}
   \newtheorem{remark}[theorem]{Remark}
\newcommand{\RR}{{\mathbb{R}}}
\newcommand{\DD}{{\mathbb{D}}}
\newcommand{\QQ}{{\mathbb{Q}}}
\newcommand{\ZZ}{{\mathbb{Z}}}
\newcommand{\cA}{{\mathcal A}}
\newcommand{\cF}{{\mathcal F}}
\newcommand{\cG}{{\mathcal G}}
\newcommand{\cL}{{\mathcal L}}
\newcommand{\fM}{{\mathfrak{M}}}
\newcommand{\zz}{{\underline{o}}}
\newcommand{\Gr}{\operatorname{Gr}}
\newcommand{\img}{\operatorname{img}}
\newcommand{\res}{\operatorname{res}}
\newcommand{\Id}{\operatorname{Id}}
\newcommand{\Span}{\operatorname{Span}}
\newcommand{\Hom}{{\operatorname{Hom}}}
\newcommand{\Homd}{\operatorname{Hom}^\bullet}
\newcommand{\cHomd}{\operatorname{\mathcal Hom}^\bullet}
\newcommand{\Ext}{{\operatorname{Ext}}}
\newcommand{\codim}{\operatorname{codim}}
\newcommand{\isom}{\simeq}
\newcommand{\LO}{Lefschetz operation }
\newcommand{\Po}{Poincare }
\newcommand{\HL}{Hard Lefschetz }
\newcommand{\st}{\operatorname{star}}
\newcommand{\prim}{\operatorname{Prim}}
\newcommand{\Cd}{C^{\bullet}}
\newcommand{\Int}{\operatorname{Int}}
\newcommand{\tensor}{\otimes}
\newcommand{\setmin}{\,\protect%
\begin{picture}(8,10)\qbezier(1,5.5)(4,4.)(7,2.5)\end{picture}\,}
\begin{document}
\title{Relative Hard Lefschetz Theorem for Fans.}

\author{Kalle Karu}
\thanks{This work was partially supported by NSERC Discovery grant.}
\address{Mathematics Department\\ University of British Columbia \\
  1984 Mathematics Road\\
Vancouver, B.C. Canada V6T 1Z2}
\email{karu@math.ubc.ca}

\begin{abstract}
We prove the Relative Hard Lefschetz theorem and the Relative Hodge-Riemann bilinear relations for combinatorial intersection cohomology sheaves on fans.
\end{abstract}

\maketitle


\tableofcontents

\section{Introduction.}

We consider fans as in the theory of toric varieties \cite{Oda,Fulton} and the combinatorial intersection cohomology sheaves on fans \cite{BBFK, BL1}. Our main goal is to prove the Relative Hard Lefschetz (RHL) theorem and the Relative Hodge-Riemann (RHR) bilinear relations for such sheaves. 
To a rational fan one can associate a toric variety. Then the combinatorial RHL and RHR theorems can be deduced from the corresponding theorems in geometry (see for example \cite{dCMM}). When the fan is not rational then there is no associated toric variety and the theorems give new information about the combinatorics of fans.

Let us introduce more notation and describe the main results. Recall from \cite{BBFK, BL1} that we give a fan $\Sigma$ the structure of a ringed space (over $\RR$) and consider the category $\fM = \fM(\Sigma)$ of graded locally free flabby sheaves of finite type on $\Sigma$. For every cone $\sigma\in\Sigma$ there is an indecomposable sheaf $\cL^\sigma$ in $\fM$. Moreover, every sheaf $\cF$ in $\fM$ is a direct sum of shifted copies of the sheaves $\cL^\sigma$. We write this decomposition as 
\[ \cF = \bigoplus_{\sigma\in\Sigma} W_\sigma\tensor \cL^\sigma, \]
for some graded vector spaces $W_\sigma$. 

The sheaf $\cL_\Sigma = \cL^\zz$, where $\zz$ is the minimal cone in $\Sigma$, is called the intersection cohomology sheaf of $\Sigma$. The global sections of this sheaf form the equivariant intersection cohomology of $\Sigma$, $IH_T(\Sigma)$. Let $V$ be the vector space containing $\Sigma$ and $A$ the ring of polynomial functions on $V$. By convention, linear functions in $A$ have degree $2$. For a complete fan $\Sigma$, $IH_T(\Sigma)$ is a free $A$-module. The quotient of $IH_T(\Sigma)$ by the maximal homogeneous ideal $m \subset A$ gives the intersection cohomology $IH(\Sigma)$.

We are mainly interested in the case where $\pi:\hat\Sigma\to\Sigma$ is a subdivision of fans and $\cF = \pi_* \cL_{\hat\Sigma}$. The sheaf $\cF$ lies in $\fM(\Sigma)$ and has a decomposition as above. The dimension vectors of the graded spaces $W_\sigma$ can be found combinatorially from the posets of the fans and the map $\pi$ on the posets. These vectors were called local $h$-vectors by Stanley \cite{Stanley}. The combinatorics of these vectors has been studied extensively, see for example \cite{KatzStapledon}. The main results of the relative theorems state that the spaces $W_\sigma$ behave like the intersection cohomology spaces of projective varieties: they satisfy \Po duality, Hard Lefschetz theorem and Hodge-Riemann bilinear relations.

Let $\pi: \hat\Sigma \to \Sigma$ be a subdivision of fans, and let $\cF= \pi_* \cL_{\hat\Sigma}$ be decomposed as above. A piecewise linear function $\hat{l}$ on $\hat\Sigma$ defines a degree $2$ map $\hat{l}: W_\sigma\to W_\sigma$ for every $\sigma$. We say that $\hat{l}$ is relatively strictly convex with respect to $\pi$ if it is strictly convex on the inverse image of every $\sigma\in\Sigma$.

\begin{theorem}[Relative Hard Lefschetz] \label{thm-RHL}
Let $\pi: \hat\Sigma\to\Sigma$ be a subdivision, $\hat{l}$ relatively strictly convex with respect to $\pi$, and $\pi_*(\cL_{\hat\Sigma}) = \oplus_\sigma W_\sigma \otimes \cL^\sigma$. Then $\hat{l}$ defines a Lefschetz operation on $W_\sigma$ for every $\sigma\in\Sigma$, i.e.
\[ \hat{l}^i: W_\sigma^{\dim\sigma-i} \to W_\sigma^{\dim\sigma+i}\]
is an isomorphism for every $i>0$.
\end{theorem}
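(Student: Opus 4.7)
The plan is to deduce Theorem \ref{thm-RHL} for a fixed cone $\sigma \in \Sigma$ by reducing to the absolute Hard Lefschetz theorem on a suitable completion of the local picture near $\sigma$. Since $W_\sigma$ depends only on the restriction of $\pi$ to the subfan $\Sigma_{\leq \sigma}$ of faces of $\sigma$, I first pass to this local setting, replacing $\Sigma$ by $\Sigma_{\leq \sigma}$ and $\hat\Sigma$ by $\pi^{-1}(\sigma)$, and run an induction on $\dim \sigma$.

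Next, I complete $\sigma$ to a complete fan $\Pi$ in $V_\sigma = \Span \sigma$ and extend $\pi$ to a subdivision $\hat\pi \colon \hat\Pi \to \Pi$ that restricts to the given $\pi^{-1}(\sigma) \to \sigma$ over $\sigma$. Then I extend $\hat{l}$ to a \emph{strictly} convex piecewise linear function $\hat{L}$ on $\hat\Pi$ as follows. Fix any piecewise linear extension $\hat{l}'$ of $\hat{l}$ to $\hat\Pi$ and any strictly convex function $l_0$ on $\Pi$, and set $\hat{L} = \hat{l}' + N\, \hat\pi^* l_0$ with $N$ large. Strict convexity of $\hat{L}$ outside $\hat\pi^{-1}(\sigma)$ is supplied by $N l_0$, while on $\hat\pi^{-1}(\sigma)$ strict convexity comes from $\hat{l}$; adding $\hat\pi^* l_0$ there is harmless because $l_0$ is linear on $\sigma$ and hence on every cone of $\hat\pi^{-1}(\sigma)$.

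The absolute Hard Lefschetz theorem applied to $(\hat\Pi, \hat{L})$ then gives that $\hat{L}$ acts as a Lefschetz operator on $IH(\hat\Pi)$. Via the decomposition
\[ IH(\hat\Pi) = \bigoplus_{\tau \in \Pi} W_\tau \otimes \bigl(\cL^\tau(\Pi)/m\cL^\tau(\Pi)\bigr), \]
the summand at $\tau = \sigma$ contributes a canonical copy of $W_\sigma$, since $\sigma$ is top-dimensional in $\Pi$ and so $\cL^\sigma(\Pi)/m\cL^\sigma(\Pi)$ reduces to a one-dimensional piece in the appropriate degree. Moreover, $N\, \hat\pi^* l_0$ acts via multiplication by $Nl_0 \in m$ and so vanishes modulo $m$, so on this summand the action of $\hat{L}$ is the action induced by $\hat{l}'$, which in turn restricts to the prescribed action of $\hat{l}$ on $W_\sigma$.

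The main obstacle is the last point: transferring the Lefschetz property from the whole $IH(\hat\Pi)$ to the single direct summand $W_\sigma$. The operator $\hat{L}$ need not be block-diagonal with respect to the decomposition, and the core of the proof is to show it is block-triangular with respect to the partial order on cones in $\Pi$, so that the $W_\sigma$ summand — sitting at the top of this filtration — inherits the Lefschetz isomorphisms. This is where the induction on $\dim \sigma$ is crucial: RHL for all $W_\tau$ with $\dim \tau < \dim \sigma$, combined with Poincar\'e duality for those $W_\tau$, is used to eliminate the off-diagonal contributions and identify the $W_\sigma$ piece cleanly inside $IH(\hat\Pi)$.
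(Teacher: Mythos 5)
Your strategy of completing $\sigma$ to a complete fan $\hat\Pi$ in $\Span\sigma$, extending $\hat l$ to a globally strictly convex $\hat L$, and invoking absolute Hard Lefschetz for $(\hat\Pi,\hat L)$ is appealing but has a genuine gap at exactly the point you flag as ``the core of the proof.'' The problem is that Hard Lefschetz for the total space $IH(\hat\Pi)$ does not, in general, descend to the graded pieces of a filtration that the operator respects. If $\hat L$ is only upper-triangular with respect to the (perverse) filtration on $IH(\hat\Pi)$, then HL for $\hat L$ says nothing about HL for its diagonal blocks: a filtered degree-$2$ endomorphism can satisfy HL on the total space while its induced map on an associated graded piece is zero. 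So even after you correctly observe that multiplication by $\hat\pi^* l_0$ is $\cA_\Pi$-linear (hence block-diagonal and congruent to $0\bmod m$ on the $\sigma$-block), you cannot conclude that the induced action of $\hat L$ on the $W_\sigma$-block is a Lefschetz operator just because $\hat L$ is one on the ambient $IH(\hat\Pi)$. This implication runs the wrong way; it is precisely the content one is trying to prove. Also note the relevant triangularity is not with respect to the cone poset on $\Pi$ (other maximal cones of $\Pi$ are not comparable to $\sigma$ and still contribute), but with respect to the degree-shift filtration $\tau_{\le p}$ studied in Section~\ref{sec-tau}; using the cone poset would already fail to give a filtration.

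The paper avoids this by never trying to extract the graded HL from a total HL. Instead, it proves the Hodge-Riemann relations simultaneously and runs an induction on dimension that strictly \emph{drops} the dimension: for a convex fan $\Phi$ (of dimension $n$) it projects the boundary $\partial\Phi$ from an auxiliary ray to a complete fan of dimension $n-1$, and reduces Theorem~\ref{thm-convex}(2) to Theorem~\ref{thm-complete}(2) in dimension $n-1$ (a statement about $l\cdot IH$ on a complete fan for a merely convex $l$). The positivity of the HR quadratic form is what makes the induction close: primitive classes for the local problem are identified with primitive classes in the completed fan up to a correction term, and HR gives a sign-definite inequality that forces the correction to vanish. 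Your outline stays in dimension $\dim\sigma$, does not introduce the auxiliary statement about $l\cdot IH$, and does not involve HR at all, so there is no engine to run the induction. Unless you find an independent argument for block-triangular HL-descent (which is essentially RHL itself), the proposal does not close.
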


The RHL theorem holds more generally for projective morphisms of fans. However, the case of projective morphisms can be reduced to the case of subdivisions as follows. Recall that a morphism of fans $\pi:\hat\Sigma\to\Sigma$ is defined by a linear map of vector spaces $p: \hat{V}\to V$, where $\hat\Sigma$ lies in $\hat{V}$ and $\Sigma$ lies in $V$. The morphism is proper if $p^{-1} (|\Sigma|) = |\hat\Sigma|$, where $|\Sigma|$ denotes the support of $\Sigma$. The morphism is projective if, in addition, there exists a piecewise linear function $\hat{l}$ on $\hat\Sigma$ that is relatively strictly convex with respect to $\pi$. Given a proper morphism $\pi:\hat\Sigma\to\Sigma$, we construct a new fan $\Sigma'$ in $\hat{V}$,
\[ \Sigma' = \{ p^{-1}(\sigma) | \sigma\in\Sigma\}.\]
This is not a fan as defined in \cite{Oda, Fulton} because its cones are not pointed (that means, not strictly convex). However, the theory of sheaves on it works the same way as for ordinary fans. Now $\pi: \hat\Sigma\to\Sigma$ factors through $\Sigma'$ and the map $\hat\Sigma\to\Sigma'$ is a subdivision to which Theorem~\ref{thm-RHL} applies. 

All theorems in this section about subdivisions $\pi: \hat\Sigma\to\Sigma$ are also valid when the fan $\Sigma$ has non-pointed cones, but the fan $\hat\Sigma$ is an ordinary fan with pointed cones. If $\hat\Sigma$ has non-pointed cones, then $\dim \sigma$ has to be replaced with $\dim\sigma-\dim\hat\zz$, where $\hat\zz$ is the smallest cone in $\hat\Sigma$. The Lefshetz isomorphism is then 
\[ \hat{l}^i: W_\sigma^{\dim\sigma-\dim\hat\zz-i} \to W_\sigma^{\dim\sigma-\dim\hat\zz+i}.\]
This version of the RHL theorem is stated in Theorem~\ref{thm-RHL1} below. 

The RHL theorem is proved together with the Relative Hodge-Riemann (RHR) bilinear relations. To define these relations we need the \Po duality pairing in the intersection cohomology. This pairing induces a nondegenerate symmetric bilinear pairing $\langle\cdot,\cdot\rangle$ on $W_\sigma$ for $\sigma\in\Sigma$:
 \[ W_\sigma^{\dim\sigma-i} \times W_\sigma^{\dim\sigma+i} \to \RR.\]
 Using the pairing and the map $\hat{l}$, we define the quadratic form $Q_{\hat{l}}$ on $W_\sigma^{\dim\sigma - i}$:
 \[ Q_{\hat{l}} (w) = \langle \hat{l}^i w, w\rangle.\]
 
 \begin{theorem}[Relative Hodge-Riemann bilinear relations] \label{thm-RHR}
 Let $\pi: \hat\Sigma\to\Sigma$ be a subdivision, $\hat{l}$ relatively strictly convex with respect to $\pi$, and $\pi_*(\cL_{\hat\Sigma}) = \oplus_\sigma W_\sigma \otimes \cL^\sigma$. Then the quadratic form 
 \[ (-1)^{\frac{\dim\sigma-i}{2}} Q_{\hat{l}}\]
 is positive definite on the primitive part of $W_\sigma^{\dim\sigma - i}$:
 \[ \prim_{\hat{l}} W_\sigma^{\dim\sigma - i} = \ker \big(\hat{l}^{i+1}: W_\sigma^{\dim\sigma-i} \to W_\sigma^{\dim\sigma+i+2}\big)\] 
 for every $i\geq 0$ and $\sigma\in\Sigma$.
 \end{theorem}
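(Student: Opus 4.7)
I would prove Theorem~\ref{thm-RHR} jointly with Theorem~\ref{thm-RHL} by induction on $\dim\Sigma$. RHR forces $\hat{l}^i$ to be injective on each primitive subspace $\prim_{\hat{l}}W_\sigma^{\dim\sigma-i}$ (since $Q_{\hat{l}}$ is nondegenerate there), and the standard Lefschetz decomposition then propagates this injectivity to the full $sl_2$-structure on $W_\sigma$, so RHR together with the induction hypothesis in lower degrees yields RHL. Conversely, RHL is needed to keep $\prim_{\hat{l}}$ transverse to $\hat{l}\cdot W_\sigma$ as $\hat{l}$ varies, so the two statements have to be established together.

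The backbone of the argument is a connectedness/signature trichotomy inside the open convex (hence connected) cone
\[
C_\pi = \{ \hat{l} : \hat{l} \text{ is strictly convex on each } \pi^{-1}(\sigma),\ \sigma\in\Sigma\}
\]
of relatively strictly convex piecewise linear functions on $\hat\Sigma$. Fix $\sigma$ and $i\geq 0$ and let $U_\sigma^i\subset C_\pi$ be the locus where $(-1)^{(\dim\sigma-i)/2}Q_{\hat{l}}$ is positive definite on $\prim_{\hat{l}}W_\sigma^{\dim\sigma-i}$. Since $Q_{\hat{l}}$ depends polynomially on $\hat{l}$, the primitive subspace varies continuously under RHL, and positive definiteness is an open condition, $U_\sigma^i$ is open. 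Under RHL the restriction of $Q_{\hat{l}}$ to $\prim_{\hat{l}}$ is nondegenerate (the Poincar\'e pairing restricts nondegenerately because $\hat{l}^i$ on $\prim_{\hat{l}}$ is injective with image transverse to $\hat{l}\cdot W_\sigma^{\dim\sigma+i-2}$), so the signature is locally constant and $U_\sigma^i$ is also closed. By connectedness of $C_\pi$, RHR for all $\hat{l}\in C_\pi$ will follow from RHR at a single point.

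To produce one such point I would degenerate $\hat{l}$ toward the boundary of $C_\pi$. Pick a strictly convex $l$ on $\Sigma$ (lifted to $\hat\Sigma$ via $\pi^*$) and a relatively strictly convex $\hat{l}_1$, and consider the family $\hat{l}_t = \hat{l}_1 + t\,\pi^*l$ as $t\to\infty$. In this limit the operator on $W_\sigma$ is dominated by $\pi^*l$, which factors through the absolute intersection cohomology of the star of $\sigma$ in $\Sigma$, while $\hat{l}_1/t$ contributes the subleading fibrewise correction. The primitive parts of $W_\sigma$ should then split asymptotically as tensor products of (a) primitive parts in the absolute intersection cohomology of a smaller fan near $\sigma$, controlled by the absolute Hodge-Riemann theorem for $\cL_\Sigma$, and (b) primitive parts in RHR for $\hat{l}_1$ applied to a subdivision of strictly smaller dimension, covered by the induction hypothesis. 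The sign $(-1)^{(\dim\sigma-i)/2}$ factors as the product of the two Hodge-Riemann signs, so positive definiteness holds in the limit. The principal obstacle is executing this limit precisely: one has to describe $W_\sigma$ explicitly through the sheaf $\cL^\sigma$ on the star of $\sigma$, decompose the Poincar\'e pairing compatibly with the degeneration, and invoke a mixed Hodge-Riemann lemma in the spirit of Gromov-Timorin to combine the two commuting convex classes $\pi^*l$ and $\hat{l}_1$ into a single positive-definite statement for $\hat{l}_t$ at large but finite $t$.
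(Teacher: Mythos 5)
Your connectedness argument on the cone $C_\pi$ is in the right spirit and is essentially the content of the paper's Lemma~\ref{lem-equiv2}: once RHL is known for all $\hat{l}\in C_\pi$, nondegeneracy of $Q_{\hat{l}}$ on $\prim_{\hat{l}}W_\sigma^{\dim\sigma-i}$ forces the signature to be locally constant, so RHR at one point propagates. But the proposal fails at precisely the step where it must produce that one point.

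The degeneration $\hat{l}_t = \hat{l}_1 + t\,\pi^*l$ does nothing to the operator on $W_\sigma$. Since $l$ is piecewise linear on $\Sigma$, its pullback $\pi^*l$ is \emph{globally linear} on $\pi^{-1}([\sigma])$, i.e.\ it lies in the degree-$2$ part of the maximal homogeneous ideal $m\subset A$. By Lemma~\ref{lem-prec-decomp}, $W_\sigma$ is a subquotient of $\overline{\cF}_\sigma = \cF_\sigma/m\cF_\sigma$, so multiplication by $\pi^*l$ acts by zero on $W_\sigma$. Thus $\hat{l}_t$ acts on $W_\sigma$ identically to $\hat{l}_1$ for every $t$: the ``asymptotic splitting'' you describe is vacuous, and there is no dominant term. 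If you instead meant to run the degeneration on the full $IH(\hat\Sigma)$, that is exactly the paper's Theorem~\ref{thm-rel-global-HR}, but its implication goes the \emph{opposite} way — it deduces HR on $IH(\hat\Sigma)$ assuming RHR on $W_\sigma$, so it cannot serve as the base case you need. The mixed Hodge--Riemann idea (Gromov--Timorin) also needs some primitive-definiteness input for $\hat{l}_1$ to get started, which is the statement you are trying to prove.

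What is genuinely missing is the chain of reductions that supplies the base case. The paper (a) shows the relative theorem is equivalent to a local version on a quasi-convex fan $\Phi=\pi^{-1}[\sigma]$ (Lemma~\ref{lem-phi-red}); (b) completes $\Phi$ to $\Phi^c$ by attaching a ray $\rho$ with $-\rho\in\Int|\Phi|$, thereby trading a primitive class $h\in W_\Phi$ for a primitive class $h+\chi_\rho g$ in $IH(\Phi^c)$ and reducing to a Hodge--Riemann statement on the \emph{image} $l\cdot IH(\hat\Sigma)$ for a complete fan of dimension $n-1$ (Theorem~\ref{thm-complete}); and (c) proves that statement by McMullen-style star-subdivision continuity, first in the simplicial case via the identity $\hat{l}=\sum a_\rho\chi_\rho$ and the local product structure at rays, then in general by passing through a barycentric subdivision. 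None of this structure — the reduction to convex fans, the boundary completion, the $l\cdot IH$ theorem, the simplicial reduction — appears in your sketch, and without it the induction has no anchor.
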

 
 Note that $W_\sigma$ in the theorem is nonzero only in even degrees, hence $\frac{\dim\sigma-i}{2}$ is an integer in the nontrivial cases. 
 

We also give two local versions of the RHL and RHR theorems. Theorems~\ref{thm-RHL}-\ref{thm-RHR} can be reduced to these local versions. 

Consider an n-dimensional fan $\Phi$ with boundary $\partial\Phi$. We say that $\Phi$ is convex if the support $|\Phi|$ is convex. Convex fans are special cases of quasi-convex fans studied in \cite{BBFK}. On a convex (or quasi-convex) fan $\Phi$ we consider global sections of $\cL_\Phi$ that vanish on $\partial\Phi$,
\[ \cL_\Phi(\Phi,\partial\Phi) \subset  \cL_\Phi(\Phi).\]
The space of sections and the space of sections vanishing on the boundary are both free $A$-modules. Reducing modulo the maximal homogeneous ideal $m \subset A$ gives a linear map of graded vector spaces 
\[ IH(\Phi,\partial\Phi)\to IH(\Phi).\]
Define $W_\Phi$ to be the image of this map. A piecewise linear function $\hat{l}$ on $\Phi$ defines a degree $2$ map $\hat{l}: W_\Phi \to W_\Phi$. The \Po duality pairing
\[ IH^{n-i}(\Phi,\partial\Phi) \times IH^{n+i}(\Phi) \to \RR \]
restricts to a nondegenerate symmetric bilinear pairing $\langle \cdot,\cdot\rangle$ on $W_\Phi$. Define the quadratic form $Q_{\hat{l}}$ on $W_\Phi^{n - i}$ by
\[ Q_{\hat{l}} (w) = \langle \hat{l}^i w, w\rangle.\]

\begin{theorem} \label{thm-convex}
 Let $\Phi$ be a convex fan of dimension $n$ and $\hat{l}$ a strictly convex piecewise linear function on $\Phi$. Let $W_\Phi = \img \big(IH(\Phi,\partial\Phi)\to IH(\Phi)\big)$.
 \begin{enumerate}
 \item The map $\hat{l}$ defines a Lefschetz operation on $W_\Phi$, i.e.
 \[ \hat{l}^i: W_\Phi^{n-i} \to W_\Phi^{n+i}\]
is an isomorphism for every $i>0$.
 \item The quadratic form
 \[ (-1)^{\frac{n-i}{2}} Q_{\hat{l}}\]
 is positive definite on the primitive part of $W_\Phi^{n - i}$:
 \[ \prim_{\hat{l}} W_\Phi^{n - i} = \ker \big(\hat{l}^{i+1}: W_\Phi^{n-i} \to W_\Phi^{n+i+2}\big)\]  
for every $i\geq 0$.
\end{enumerate}
 \end{theorem}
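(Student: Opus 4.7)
The plan is to prove parts (1) and (2) simultaneously by induction on $n = \dim \Phi$, with the case $n = 0$ trivial. Fix $n \geq 1$ and assume the theorem for all convex fans of dimension less than $n$. Applying the inductive hypothesis to the star fans $\Phi^\tau$ for cones $\tau \in \Phi$ with $\dim\tau > 0$---each a convex (or at worst quasi-convex) fan of dimension $n - \dim\tau < n$---yields the Lefschetz package for the local contributions at every $\tau \neq \zz$. This controls the part of $IH(\Phi)$ coming from $\partial\Phi$ and reduces the problem to proving HL and HR on the ``new'' summand $W_\Phi$ at the minimal cone.

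The cone $\cK(\Phi)$ of strictly convex PL functions on $\Phi$ is open and convex, hence connected. The subset $\cU \subseteq \cK(\Phi)$ where HL holds on $W_\Phi$ is open, since it is the complement of the zero locus of a polynomial (the determinant of $\hat{l}^i$ as a map between finite-dimensional graded pieces). On $\cU$, the primitive decomposition depends continuously on $\hat{l}$ and the quadratic form $Q_{\hat{l}}$ is non-degenerate on each primitive piece, so its signature is locally constant. Once HR is verified at a single $\hat{l} \in \cU$, it propagates to all of $\cU$, and a continuity/density argument then extends HL and HR to all of $\cK(\Phi)$.

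Thus it suffices to find one $\hat{l} \in \cK(\Phi)$ at which both HL and HR hold. I would produce it via a simplicial subdivision $\pi: \hat\Phi \to \Phi$ (always possible without changing the support) together with a PL function $\hat{l}'$ on $\hat\Phi$ that is strictly convex relative to $\pi$. For the simplicial convex fan $\hat\Phi$, $\cL_{\hat\Phi}$ is the structure sheaf and $W_{\hat\Phi}$ becomes a quotient of a Stanley-Reisner-type ring, for which HL and HR can be established by direct combinatorial means. For $l \in \cK(\Phi)$ and small $\epsilon > 0$, the perturbation $\pi^* l + \epsilon \hat{l}'$ is strictly convex on $\hat\Phi$. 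Combining with the first paragraph (controlling the $\tau \neq \zz$ summands), the decomposition of $\pi_* \cL_{\hat\Phi}$ then transfers HL and HR from $\hat\Phi$ to the $\cL^\zz$-summand on $\Phi$, i.e.\ to $W_\Phi$ equipped with the induced $\hat{l}$.

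The main obstacle is the simplicial base case used in the existence step: for non-rational $\hat\Phi$ there is no ambient toric variety, so HL and HR for the simplicial Stanley-Reisner ring must be proved by purely combinatorial means (e.g.\ via flips or a polytope-algebra argument in the spirit of McMullen). A secondary difficulty is arranging the simultaneous inductive proofs of Theorems \ref{thm-RHL}, \ref{thm-RHR}, and \ref{thm-convex} so that each invocation of the inductive hypothesis strictly decreases some well-founded complexity measure and no circularity arises.
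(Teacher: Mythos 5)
Your plan is genuinely different from the paper's, and the two key steps you lean on are precisely the ones that are not available, so the proposal has real gaps. The paper's proof of this theorem does not go through simplicial subdivisions of $\Phi$ at all. Instead it completes $\Phi$ to a complete fan $\Phi^c$ by adjoining a ray $\rho$ with $-\rho\in\Int|\Phi|$, so that $\hat{l}$ extends to a strictly convex function on $\Phi^c$. Given $h\in IH(\Phi,\partial\Phi)$ whose class in $W_\Phi$ is primitive, one corrects it by $\chi_\rho g$ with $g$ chosen so that $h+\chi_\rho g$ is primitive in $IH(\Phi^c)$, and one proves the key orthogonality identity $Q_{\hat{l}}(h+\chi_\rho g)=Q_{\hat{l}}(h)+Q_{\hat{l}}(l\cdot g)$, where the first and third forms live on $IH(\Phi^c)$ and on $l\cdot IH(\hat\Sigma)$ for $\hat\Sigma = \partial\Phi$ projected from $\rho$ (a complete fan of dimension $n-1$) and $l$ is the PL function whose graph is $\partial\Phi$. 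The first term has the correct sign by the already-known Hodge--Riemann relations on the complete fan $\Phi^c$; the second has the correct sign by Theorem~\ref{thm-complete} in dimension $n-1$. That is the actual inductive structure: Theorem~\ref{thm-convex} in dimension $n$ reduces to Theorem~\ref{thm-complete} in dimension $n-1$, not to a simplicial version of Theorem~\ref{thm-convex} itself.

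Two concrete problems with your route. First, the connectedness argument as written is circular at exactly the point that matters: you establish HL on an \emph{open} subset $\cU\subseteq\cK(\Phi)$ and HR on $\cU$, but the ``continuity/density argument'' to extend to all of $\cK(\Phi)$ is not available. Nondegeneracy is an open condition; HR propagating along a connected set in which HL already holds is fine (this is how the paper argues in Lemma~\ref{lem-equiv2}), but you cannot conclude $\cU=\cK(\Phi)$ merely because $\cU$ is open and $\cK(\Phi)$ is connected, since a limit of forms of fixed signature can degenerate at the boundary of $\cU$. Second, your proposed witness relies on HL and HR for a simplicial convex fan with boundary, which you flag as ``the main obstacle'' and then defer to ``flips or polytope-algebra arguments.'' But for non-rational convex fans with boundary this is essentially the full strength of the statement you are trying to prove, not a simpler base case; the paper's simplicial reduction (Lemma~\ref{lem-mcmullen} and the surrounding lemmas) occurs in the proof of Theorem~\ref{thm-complete} and rests on Lemma~\ref{lem-loc-prod}, the barycentric star-subdivision argument, and the inductive use of the lower-dimensional theorems; it does not make the local contribution on $W_\Phi$ any more elementary. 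Finally, the first paragraph's appeal to star fans $\Phi^\tau$ for $\tau\neq\zz$ doesn't reduce the problem: Theorem~\ref{thm-convex} is already intrinsically a statement about $W_\Phi$ (the $\zz$-summand), so there is nothing to ``strip off'' before arriving at it.
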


If $\Phi$ has non-pointed cones, then $n$ has to be replaced with $n-\dim \zz_\Phi$ in the statement of the theorem.

We consider the support of a convex fan $\Phi$ as a cone $\sigma$ (not necessarily pointed). Then $\Phi$ is a subdivision of $\sigma$. Moreover, $W_\sigma = W_\Phi$ with the  same action of $\hat{l}$ and the same quadratic form $Q_{\hat{l}}$. This shows the equivalence of Theorem~\ref{thm-convex} to the RHL and RHR theorems.


For the final version of the RHL and RHR theorems we consider a complete fan $\hat\Sigma$ of dimension $n$. Assume that we have piecewise linear functions $l$ and $\hat{l}$ on $\hat\Sigma$, where $\hat{l}$ is strictly convex and $l$ is only convex. We consider $l\cdot IH(\hat\Sigma)$, the image of $l$ acting on $IH(\hat\Sigma)$. Since the actions of $l$ and $\hat{l}$ commute, $\hat{l}$ defines a degree $2$ map on $l\cdot IH(\hat\Sigma)$. We use the \Po pairing on $IH(\hat\Sigma)$ to define the quadratic form $Q_{\hat{l}}$ on $l\cdot IH^{n-i}(\hat\Sigma)$:
\[ Q_{\hat{l}}(lh) = \langle \hat{l}^{i-1} l h, h \rangle.\]

\begin{theorem} \label{thm-complete}
 Let $\hat\Sigma$ be a complete fan of dimension $n$, $\hat{l}$ a strictly convex and $l$ a convex piecewise linear function on $\hat\Sigma$. Let $W = l\cdot IH(\hat\Sigma)$.
 \begin{enumerate}
 \item The map $\hat{l}$ defines a Lefschetz operation on $W$, i.e.
 \[ \hat{l}^{i-1}: W^{n-i+2} \to W^{n+i}\]
is an isomorphism for every $i>1$.
 \item The quadratic form
 \[ (-1)^{\frac{n-i}{2}} Q_{\hat{l}}\]
 is positive definite on the primitive part of $W^{n - i+2}$:
 \[ \prim_{\hat{l}} W^{n - i+2} = \ker \big(\hat{l}^{i}: W^{n-i+2} \to W^{n+i+2}\big)\]
  for every $i\geq 1$.
\end{enumerate}
 \end{theorem}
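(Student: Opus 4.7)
The plan is to reduce Theorem~\ref{thm-complete} to Theorems~\ref{thm-RHL} and~\ref{thm-RHR} via a coarsening construction. Let $\Sigma$ be the coarsest complete fan in $V$ refined by $\hat\Sigma$ on each of whose cones $l$ is linear; then $l$ descends to a strictly convex piecewise linear function on $\Sigma$ (the case of $l$ globally linear on $V$ is trivial, since there $l\cdot IH(\hat\Sigma)=0$), and $\pi\colon\hat\Sigma\to\Sigma$ is a subdivision of complete fans. Since $\hat{l}$ is strictly convex on $\hat\Sigma$ it is automatically relatively strictly convex with respect to $\pi$, so Theorems~\ref{thm-RHL} and~\ref{thm-RHR} apply to the decomposition $\pi_*\cL_{\hat\Sigma}=\bigoplus_{\sigma\in\Sigma}W_\sigma\otimes\cL^\sigma$.

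Taking global sections over the complete fan $\Sigma$ and reducing modulo the maximal homogeneous ideal $m\subset A$ produces
\[ IH(\hat\Sigma)\;=\;\bigoplus_{\sigma\in\Sigma}W_\sigma\otimes V_\sigma,\qquad V_\sigma\;:=\;\cL^\sigma(\Sigma)/m. \]
Each $V_\sigma$ is a shifted intersection cohomology of an auxiliary complete fan associated with the link of $\sigma$, carrying a \Po pairing centered at $n-\dim\sigma$. Because $l$ is pulled back from $\Sigma$, it acts on the decomposition diagonally as $\Id_{W_\sigma}\otimes l|_{V_\sigma}$, and strict convexity of $l$ on $\Sigma$ together with Theorem~\ref{thm-convex} applied to the link of $\sigma$ makes $l$ a Lefschetz operator on each $V_\sigma$ of center $n-\dim\sigma$ with HR positivity on $l$-primitives. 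By Theorems~\ref{thm-RHL} and~\ref{thm-RHR}, $\hat{l}$ is a Lefschetz operator on each $W_\sigma$ of center $\dim\sigma$ with HR positivity on $\hat{l}$-primitives.

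In a suitably chosen splitting of the decomposition, the full $\hat{l}$-action on $IH(\hat\Sigma)$ takes the tensor-sum form $\hat{l}_{W_\sigma}\otimes\Id+\Id\otimes\hat{l}_{V_\sigma}$ on each summand, where $\hat{l}_{V_\sigma}$ is an induced Lefschetz operator on $V_\sigma$ of center $n-\dim\sigma$ commuting with $l|_{V_\sigma}$. Each $V_\sigma$ is then a bi-$\mathfrak{sl}_2$-module under the commuting pair $(l,\hat{l}_{V_\sigma})$, and $l\cdot V_\sigma$, being the non-primitive part under $l$, is a sub-bi-$\mathfrak{sl}_2$-module, hence in particular an $\hat{l}_{V_\sigma}$-Lefschetz submodule. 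Consequently $W=l\cdot IH(\hat\Sigma)=\bigoplus_\sigma W_\sigma\otimes(l\cdot V_\sigma)$ is a direct sum of tensor products of $\mathfrak{sl}_2$-Lefschetz modules for $\hat{l}$, and pure representation theory delivers part~(1). For part~(2), the primitive part of $W^{n-i+2}$ under $\hat{l}$ embeds into tensor products of $\hat{l}_{W_\sigma}$-primitives and $\hat{l}_{V_\sigma}$-primitives restricted to $l\cdot V_\sigma$; the HR signs $(-1)^{(\dim\sigma-i_W)/2}$ and $(-1)^{(n-\dim\sigma-i_V)/2}$ multiply to $(-1)^{(n-i)/2}$ when $i=i_W+i_V$, and the positivity on $W$ is the product of the two positivities.

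The principal obstacle is producing the splitting in which the $\hat{l}$-action decouples as the claimed tensor-sum with an internal Lefschetz operator $\hat{l}_{V_\sigma}$ on the $V_\sigma$-factor that commutes with $l|_{V_\sigma}$. This is the combinatorial counterpart of the compatibility between the decomposition theorem and the Lefschetz $\mathfrak{sl}_2$-action for projective morphisms of algebraic varieties, and is the step that needs genuine technical work in the sheaf-theoretic setting on fans. Once the splitting is in place, the Hard Lefschetz isomorphism and the Hodge--Riemann positivity on $W$ follow by standard $\mathfrak{sl}_2$-representation theory for tensor products of Lefschetz modules.
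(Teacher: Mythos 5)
Your proposal breaks down at the step you yourself flag as the principal obstacle, and the breakdown is not merely technical: the claimed structure does not exist. Multiplication by $\hat{l}$ on $\pi_*\cL_{\hat\Sigma}$ does \emph{not} preserve any direct-sum splitting $\bigoplus_\sigma W_\sigma\otimes\cL^\sigma$; it only preserves the perverse filtration $\tau_{\leq p}$, so it descends to the associated graded. On $\Gr\,\pi_*\cL_{\hat\Sigma}$ the induced action of $\hat{l}$ is $\hat{l}_W\otimes\Id$ (there is no $\Id\otimes\hat{l}_{V_\sigma}$ term at all), and the actual $\hat{l}$ differs from this by strictly lower-filtration terms which cannot be split away. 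The tensor-sum operator one \emph{can} extract from the decomposition and from Hard Lefschetz on $\Sigma$ is $\hat{l}_W\otimes\Id+\Id\otimes l$, but that only matches $l+\varepsilon^2\hat{l}$ after a degeneration $\varepsilon\to 0$. This is precisely the content of the paper's Theorems~\ref{thm-rel-global-HL}, \ref{thm-rel-global-HR}, \ref{thm-complete-HL}, \ref{thm-complete-HR}, which establish HL and HR on $l\cdot IH(\hat\Sigma)$ only for strictly convex functions of the special form $l+\varepsilon^2\hat{l}$ with $\varepsilon$ small; the paper explicitly states that this argument alone does not prove Theorem~\ref{thm-complete}.

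To finish, the paper uses two further ideas that your outline is missing entirely. First, Lemma~\ref{lem-equiv2}: granting part~(1) for all strictly convex $\hat{l}$, the form $Q_{\hat{l}}$ is nondegenerate, hence its signature is constant on the connected set of strictly convex functions, so it suffices to verify part~(2) for the one function $l+\varepsilon^2\tilde{l}$ handled by the degeneration argument. Second, and this is the real work, part~(1) for a general strictly convex $\hat{l}$ is established by reducing to the simplicial case via a sequence of star subdivisions (the barycentric subdivision), using a McMullen-type identity $Q_{\hat{l}}(\hat{h})=Q_{\tilde{l}}(\tilde{h})+Q_{\bar{l}}(\bar{h})$ together with the local Hodge--Riemann relations on $\overline{\st\rho}$ in lower dimension (Lemma~\ref{lem-mcmullen} and the subsequent lemmas). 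Your $\mathfrak{sl}_2$/tensor-product argument cannot substitute for this: there is simply no commuting Lefschetz pair $(l,\hat{l}_{V_\sigma})$ on $V_\sigma=IH(\st\sigma)$ compatible with the full $\hat{l}$-action, because the full $\hat{l}$-action is not block-diagonal for the decomposition. The coarsening set-up $\pi\colon\hat\Sigma\to\Sigma$ you use to invoke RHL/RHR is correct and is also what the paper does, but beyond that the proposal proves a different (and strictly weaker) statement than the theorem asserts.
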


When $l=\hat{l}$ is strictly convex then the theorem follows from the usual Hard Lefschetz theorem and Hodge-Riemann bilinear relations. If $\hat\Sigma$ has non-pointed cones, then again $n$ needs to be replaced with $n-\dim \hat\zz$ in the statement of the theorem.

We reduce Theorem~\ref{thm-convex} to Theorem~\ref{thm-complete}, with $\hat\Sigma$ being the fan $\partial\Phi$, projected from a ray in the interior of $\Phi$. In particular, $\hat\Sigma$ has smaller dimension than $\Phi$. Theorem~\ref{thm-complete} in turn is proved using the RHL and RHR theorems. We define $\Sigma$ to be the fan on which $l$ is strictly convex. Then $\hat\Sigma$ is a subdivision of $\Sigma$ and the space $W = l\cdot IH(\hat\Sigma)$ can be studied using the RHL and RHR theorems applied to this subdivision.

In the following sections we will generalize the RHL and RHR theorems. First, we will allow all fans to have non-pointed cones. Second, we consider not only the sheaf $\cF=\pi_* \cL_{\hat\Sigma}$, but also sheaves $\cF=\pi_* \cL^\tau$ for $\tau\in\hat\Sigma$.
It turns out that to get the cleanest forms of the theorems, we need to proceed as in the study of perverse sheaves on algebraic varieties. We define the perverse filtration $\tau_{\leq p} \cF$ on sheaves $\cF$ in $\fM$, and the categories $\fM^{\leq p}$ and $\fM^{\geq p}$. Then the category $\fM^0 = \fM^{\leq 0} \cap \fM^{\geq 0}$ is the category of perverse sheaves, which are finite direct sums of sheaves $\cL^\tau(\codim\tau)$ for $\tau\in\Sigma$. The shift in grading by $\codim \tau$ is analogous to what happens in geometry: on a smooth $n$-dimensional variety $X$ the sheaf $\QQ_X[n]$ is perverse. We prove the RHL theorem for all sheaves $\cF = \pi_* \cG$, where $\cG$ is in $\fM^0(\hat\Sigma)$.

The outline of the paper is as follows. In Section~2 we recall the definition of the category $\fM$, the decomposition theorem and the Hard Lefschetz theorem. In Section~3 we consider the intersection pairing from \cite{BL1,BL2}. We will adapt it to the case of fans with non-pointed cones and recall the Hodge-Riemann bilinear relations. In Section~4 we define the filtration $\tau_{\leq p}$ and state the RHL and RHR theorems. These theorems are proved in Section~5.

{\bf Acknowledgment.} I thank Balin Fleming for fruitful discussion during the preparation of this article.

\section{The category $\fM$.}

In this section we recall the definitions and basic properties of locally free flabby sheaves on a fan $\Sigma$ \cite{BBFK, BL1}. We will mainly use the notation from \cite{BL1}.

\subsection{Sheaves on fans.}
We work over the field $\RR$ and consider fans $\Sigma$ in a vector space $V\isom \RR^n$ as in the theory of toric varieties \cite{Oda, Fulton}. We do not require fans to be rational. 
In \cite{Oda, Fulton} cones in a fan are pointed (the largest subspace they contain is the zero space). We consider more general fans of the form 
\[ \Sigma \times\RR^m = \{ \sigma\times\RR^m| \sigma\in\Sigma\},\]
 where $\Sigma$ is a fan with pointed cones. We call a fan $\Sigma$ pointed if all its cones are pointed.

The {\em support} $|\Sigma|$ of a fan is the union of its cones. A fan is {\em complete} if its support is all of $V$, it is {\em convex} if its support is convex and full-dimensional in $V$. A full-dimensional fan with boundary $(\Sigma, \partial\Sigma)$ is {\em quasi-convex} (see \cite{BBFK}) if $\partial \Sigma$ is a fan over a generalized homology sphere. Complete fans are special cases of quasi-convex fans with $\partial\Sigma=\emptyset$. 

A continuous function $|\Sigma|\to \RR$ is called piecewise polynomial if it is polynomial on every cone of $\Sigma$. On a convex fan it makes sens to talk about convex piecewise linear functions. Such a function is strictly convex if the largest pieces where it is linear are the maximal cones of $\Sigma$.

If $\Sigma$ is a fan in $V$ and $\hat\Sigma$ a fan in $\hat{V}$, then a map of fans $\pi: \hat\Sigma\to\Sigma$ is defined by a linear map $p: \hat{V}\to V$ that takes each cone in $\hat\Sigma$ to some cone in $\Sigma$. Then  $\pi(\sigma)$ is defined as the smallest cone in $\Sigma$ containing $p(\sigma)$. The map $\pi$ is proper if $p^{-1}(|\Sigma|) = |\hat\Sigma|$. The map is a subdivision if it is proper, $\hat{V} = V$, and $p=\Id_V$.

The set of cones in a fan forms a poset, $\tau \leq \sigma$ means that $\tau$ is a face of $\Sigma$. We let $\zz = \zz_\Sigma \in \Sigma$ be the smallest cone of $\Sigma$. (For our generalized fans $\zz=\RR^m$.) Denote
\begin{align*}
[\sigma] &= \{\tau \leq \sigma\}, \\
\partial \sigma &= [\sigma]\setmin \{\sigma\}, \\
\st\sigma = \st_\Sigma \sigma &= \{\tau \in \Sigma| \tau\geq \sigma\}\\
[\st\sigma] = [\st_\Sigma \sigma] &= \{\tau \in \Sigma| \tau\leq \nu \geq \sigma \text{ for some $\nu$}\}.
\end{align*}
We consider a cone $\sigma$ always in a vector space $V$ and define $\codim\sigma = \dim V -\dim\sigma$.

In all notations we omit the subscript $\Sigma$ if the fan is clear from the context.

A fan $\Sigma$ is given the topology in which open sets are the subfans of $\Sigma$. Then, for example, $[\sigma]\subset \Sigma$ is open, it is the smallest open set containing $\sigma$; $\st \sigma \subset \Sigma$ is the closure of $\{\sigma\}$.

The usual notion of a {\em sheaf} $F$ of vector spaces on the topological space $\Sigma$ is equivalent to the following data
\begin{itemize}
  \item a vector space $F_\sigma$ for each $\sigma\in\Sigma$,
  \item a linear map $\res^\sigma_\tau: F_\sigma\to F_\tau$ for each $\tau\leq\sigma$,
\end{itemize}
such that $\res^\sigma_\sigma=  \Id_{F_\sigma}$ and $\res^\tau_\rho \circ \res^\sigma_\tau = \res^\sigma_\rho$ for $\rho\leq\tau\leq\sigma$. The vector spaces $F_\sigma$ are the stalks of the sheaf and the maps $\res^\sigma_\tau$ are the restriction maps.

If $S\subset \Sigma$ is a subfan and $F$ a sheaf on $\Sigma$, we write $F(S)$ for the space of sections,
\[ F(S)  = \{(f_\sigma)_{\sigma \in S} \in \oplus_\sigma F_\sigma | \res^\sigma_\tau (f_\sigma) = f_\tau \text{ for $\tau\leq\sigma \in S$}\}.\]
In particular, $F([\sigma]) = F_\sigma$, and $F(\Sigma)$ is the space of global sections. We also denote the space of global sections by $\Gamma(\Sigma,F)$.

Given a subdivisions of fans $\pi:\hat{\Sigma}\to\Sigma$ and a sheaf $F$ on $\hat\Sigma$, the pushforward sheaf $\pi_* F$ has stalks
\[ \pi_*(F)_\sigma =  F(\pi^{-1}([\sigma])) = 
\{(f_{\sigma'})_{\pi(\sigma') \in[\sigma]} | 
\res^{\sigma'}_{\tau'} (f_{\sigma'}) = 
f_{\tau'} \text{ for $\tau'\leq\sigma' \in \pi^{-1}([\sigma])$}\}.\]

A {\em morphism} of sheaves $\phi:F\to G$ on $\Sigma$ is a collection of linear maps $\phi_\sigma: F_\sigma\to G_\sigma$ for $\sigma\in\Sigma$, commuting with the restriction maps.

\subsection{Definition of the category $\fM$ and the decomposition theorem.}

The {\em structure sheaf} $\cA = \cA_\Sigma$ has stalk $\cA_\sigma$, the ring of polynomial functions on $\sigma$, and restriction maps the restrictions of functions. The sheaf $\cA_\Sigma$ is a sheaf of graded $\RR$-algebras. Its global sections are piecewise polynomial functions on $\Sigma$. We follow the convention that piecewise linear functions have degree $2$. Let $A$ be the ring of polynomial functions on the ambient space $V$. Then $\cA_\Sigma$ is also a sheaf of graded $A$-algebras.

There is an obvious notion of sheaves of $\cA_\Sigma$ modules on $\Sigma$. For such sheaves $\cF$ the stalks $\cF_\sigma$ have to be $\cA_\sigma$-modules and the restriction maps $\res^\sigma_\tau$ morphisms of $\cA_\sigma$-modules. A sheaf homomorphism $\phi: \cF\to \cG$ is a homomorphism of $\cA_\Sigma$-modules if $\phi_\sigma: \cF_\sigma\to \cG_\sigma$ is a morphism of $\cA_\sigma$-modules for each $\sigma\in\Sigma$. Note that morphisms of graded sheaves must preserve the grading.

Let $\fM = \fM(\Sigma)$ denote the full subcategory of sheaves of $\cA_\Sigma$ modules that are graded, locally free, flabby, and of finite type. This means that a sheaf of $\cA_\Sigma$-modules $\cF$ lies in $\fM$ if for every $\sigma\in\Sigma$
\begin{itemize}
  \item the stalk $\cF_\sigma$ is a graded free $\cA_\sigma$ module of finite rank,
  \item the restriction map
  \[ \cF_\sigma = \cF([\sigma]) \to \cF(\partial\sigma) \]
  is surjective.
\end{itemize}

Sheaves in $\fM$ have a very simple structure. For every $\tau\in\Sigma$ there exists an indecomposable sheaf $\cL^\tau = \cL^\tau_\Sigma$ in $\fM$. The sheaf $\cL^\tau$ is supported on $\st \tau$ and it is constructed inductively as follows. Set $\cL^\tau_\tau = \cA_\tau$, and for $\sigma>\tau$ let $\cL^\tau_\sigma$ be the graded free $\cA_\sigma$-module of smallest rank that surjects onto $\cL^\tau(\partial\sigma)$. (In other words, $\cL^\tau_\sigma\to \cL^\tau(\partial\sigma)$ is a minimal projective cover of the $\cA_\sigma$-module $\cL^\tau(\partial\sigma)$.) The sheaf $\cL^\zz$ is denoted $\cL_\Sigma$ and called the intersection cohomology sheaf of $\Sigma$.

\begin{theorem}[ Decomposition Theorem]
Every sheaf $\cF$ in $\fM$ can be decomposed as a finite direct sum of shifts of the sheaves $\cL^\tau$.  
\end{theorem}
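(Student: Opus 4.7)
The plan is to prove the decomposition by induction on the number of cones in $\Sigma$, peeling off one face-maximal cone at a time, and extracting each summand by exploiting the minimality (projective cover) built into the definition of $\cL^\tau_\sigma$.

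The base case $\Sigma=\{\zz\}$ is immediate: the category $\fM(\{\zz\})$ consists of graded free $\cA_\zz$-modules of finite rank, which are visibly finite direct sums of shifts of $\cA_\zz=\cL^\zz$. For the inductive step, I would pick a face-maximal cone $\sigma\in\Sigma$ and let $\Sigma' = \Sigma\setmin\{\sigma\}$, which is still a fan and is open in $\Sigma$. Applying the inductive hypothesis to the restriction $\cF|_{\Sigma'}\in\fM(\Sigma')$ gives
\[ \cF|_{\Sigma'} \cong \bigoplus_{\tau\in\Sigma'} W_\tau\tensor \cL^\tau_{\Sigma'} \]
for some finite-dimensional graded vector spaces $W_\tau$. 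Since the construction of $\cL^\tau$ is recursive in the star of $\tau$, and $\sigma$ is the only cone of $\Sigma$ outside $\Sigma'$, we have $\cL^\tau_\Sigma|_{\Sigma'} = \cL^\tau_{\Sigma'}$ for every $\tau\in\Sigma'$. Hence $\cG:=\bigoplus_{\tau\in\Sigma'} W_\tau\tensor \cL^\tau_\Sigma$ lies in $\fM(\Sigma)$ and satisfies $\cG|_{\Sigma'}\cong\cF|_{\Sigma'}$.

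The remaining task is to match the stalks at $\sigma$. Flabbiness of $\cF$ and $\cG$ provides surjections $\cF_\sigma\twoheadrightarrow\cF(\partial\sigma)$ and $\cG_\sigma\twoheadrightarrow\cG(\partial\sigma)$ from graded free $\cA_\sigma$-modules, and the previous step identifies their targets. The defining minimality of each $\cL^\tau_\sigma$ makes $\cG_\sigma\twoheadrightarrow\cG(\partial\sigma)$ a projective cover in the category of finitely generated graded $\cA_\sigma$-modules. The graded Nakayama lemma on the graded local ring $\cA_\sigma$ then forces any other surjection from a graded free module onto the same quotient to split as the projective cover plus a free complement. Applied to $\cF_\sigma$, this yields
\[ \cF_\sigma \cong \cG_\sigma \oplus (W_\sigma\tensor\cA_\sigma) \]
for a finite-dimensional graded vector space $W_\sigma$. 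Because $\sigma$ is face-maximal, $\cL^\sigma_\Sigma$ is supported only at $\sigma$ with stalk $\cA_\sigma$, so the stalkwise splitting extends automatically to a sheaf splitting $\cF\cong\cG\oplus(W_\sigma\tensor\cL^\sigma_\Sigma)=\bigoplus_{\tau\in\Sigma}W_\tau\tensor\cL^\tau_\Sigma$.

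The hard part will be the projective-cover/splitting step at $\sigma$: to carve out the summand $W_\sigma\tensor\cL^\sigma_\Sigma$ one must verify that the surjection $\cF_\sigma\twoheadrightarrow\cF(\partial\sigma)$ factors through the minimal such surjection $\cG_\sigma\twoheadrightarrow\cG(\partial\sigma)$ as a split epimorphism. This reduces to graded Nakayama applied to $\cA_\sigma$ together with the minimality clause in the definition of $\cL^\tau_\sigma$; once these two ingredients are in place, the splitting is essentially forced, and the finite-type assumption on $\cF$ ensures that each $W_\tau$ is finite-dimensional, so the resulting direct sum is finite.
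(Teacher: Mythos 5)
The paper only \emph{recalls} the Decomposition Theorem from the cited references \cite{BBFK,BL1} and gives no proof of its own, so there is no internal proof to compare against. Your inductive argument is the standard one from those sources and it is correct: peel off a face-maximal cone $\sigma$, note that $\Sigma'=\Sigma\setmin\{\sigma\}$ is an open subfan with $\cF|_{\Sigma'}\in\fM(\Sigma')$ and $\cL^\tau_\Sigma|_{\Sigma'}=\cL^\tau_{\Sigma'}$, apply induction, and then split off the new summand at $\sigma$ by comparing the two surjections onto $\cF(\partial\sigma)\cong\cG(\partial\sigma)$ using that $\cG_\sigma\to\cG(\partial\sigma)$ is a projective cover over the graded local ring $\cA_\sigma$. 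The one place that deserves an explicit line in a written-out version is the claim that the stalkwise splitting ``extends automatically'' to a sheaf splitting: one should note that any $\cA_\sigma$-linear splitting $s:\cG_\sigma\to\cF_\sigma$ of the lifted surjection $\cF_\sigma\twoheadrightarrow\cG_\sigma$ is automatically compatible with the restriction maps to $\partial\sigma$ (composing with the restriction and using commutativity of the lift shows both composites to $\cG(\partial\sigma)$ agree), and the kernel is a graded projective, hence free, $\cA_\sigma$-module vanishing on $\partial\sigma$, i.e.\ a shifted copy of $\cL^\sigma_\Sigma$. With that small check filled in, the proof plan is complete and matches the known argument.
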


We denote by $\cF(j)$ the shift in grading, $\cF(j)^i = \cF^{j+i}$, and write a decomposition of $\cF$ as
\[ \cF = \bigoplus_{i} \cL^{\tau_i}(j_i) = \bigoplus_{\tau\in\Sigma} W_\tau \otimes \cL^\tau,\]
where $W_\tau$ are some graded vector spaces. The decomposition is not unique. However, the dimensions of the graded pieces of $W_\tau$ are determined by $\cF$. 

When $\pi:\hat\Sigma\to\Sigma$ is a subdivision of fans, then the push-forward $\pi_*$ maps $\fM(\hat\Sigma)$ to $\fM(\Sigma)$. 

\subsection{Intersection cohomology.}

Let $\Sigma$ be a fan in $V\isom \RR^n$ and recall that $A \isom \RR[x_1,\ldots,x_n]$ is the ring of polynomial functions on $V$. A sheaf $\cF$ in $\fM=\fM(\Sigma)$ is a sheaf of $A$-modules and its set of global sections $\cF(\Sigma)$ is an $A$-module. We let 
\[ \overline{\cF(\Sigma)} = \cF(\Sigma)/m \cF(\Sigma),\]
where $m$ is the maximal homogeneous ideal in $A$, $m\isom (x_1,\ldots,x_n)$. We denote the intersection cohomology
\begin{align*}
  IH(\Sigma) &= \overline{\cL_\Sigma(\Sigma)},\\
  IH(\st \tau) &= \overline{\cL^\tau(\Sigma)}.
\end{align*}
These are graded vector spaces.

When the fan $\Sigma$ is complete, or more generally quasi-convex, then $\cF(\Sigma)$ is a free $A$-module for any $\cF$ in  $\fM$. For a quasi-convex fan $(\Sigma,\partial\Sigma)$ let $\cF(\Sigma,\partial\Sigma)$ be the set of global sections that vanish on $\partial\Sigma$,
\[\cF(\Sigma,\partial\Sigma) = \ker \big( \cF(\Sigma) \to \cF(\partial\Sigma)\big).\]
 This is again a free $A$-module. We denote
\begin{gather*}
IH(\Sigma,\partial\Sigma) = \overline{\cL_\Sigma(\Sigma,\partial\Sigma)}= \cL_\Sigma(\Sigma,\partial\Sigma)/ m \cL_\Sigma(\Sigma,\partial\Sigma),\\
IH(\st\tau,\partial\st\tau) = \overline{\cL^\tau(\Sigma,\partial\Sigma)}= \cL^\tau(\Sigma,\partial\Sigma)/ m \cL^\tau(\Sigma,\partial\Sigma).
\end{gather*}
When $\Sigma_1 \subset \Sigma_2$ is an inclusion of quasi-convex fans of the same dimension, we get naturally the inclusion 
\[ IH(\Sigma_1,\partial\Sigma_1) \to IH(\Sigma_2,\partial\Sigma_2),\]
and the surjection 
\[ IH(\Sigma_2) \to IH(\Sigma_1).\]

\subsection{Hard Lefschetz theorem}

Let $l\in\cA^2(\Sigma)$ be a piecewise linear function on $\Sigma$. Multiplication with $l$ defines a degree $2$ map on $\cL^\tau(\Sigma)$ and on $IH(\st\tau)$. The  following theorem was proved in \cite{Karu, BL2}.

\begin{theorem}[Hard Lefschetz] \label{thm-HL}
Let $\Sigma$ be a complete fan and $l\in\cA^2(\Sigma)$ strictly convex. Then multiplication with $l$ defines a \LO on $IH(\st\tau)$ for every $\tau\in\Sigma$:
\[ l^i : IH^{c-i}(\st\tau) \to IH^{c+i}(\st\tau)\]
is an isomorphism for every $i>0$, where $c=\codim\tau$.
\end{theorem}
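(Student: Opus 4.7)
The strategy is to prove HL simultaneously with the Hodge--Riemann bilinear relations (HR) by induction on $n=\dim\Sigma$, because the positivity in HR is the invariant that propagates through the inductive step whereas HL alone is not rigid enough. The combined statement reads: for every complete fan $\Sigma$ of dimension at most $n$, every strictly convex $l\in\cA^2(\Sigma)$, every $\tau\in\Sigma$, and every $i\geq 0$, the map $l^i:IH^{c-i}(\st\tau)\to IH^{c+i}(\st\tau)$ is an isomorphism and the quadratic form $(-1)^{(c-i)/2}\langle l^i h,h\rangle$ is positive definite on the primitive subspace of $IH^{c-i}(\st\tau)$, where $c=\codim\tau$.

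For the inductive step, the case $\tau\neq\zz$ reduces to lower dimension: the star $\st\tau$ together with the sheaf $\cL^\tau$ identifies with the intersection cohomology sheaf of a complete fan in the quotient space $V/\Span(\tau)$, of dimension $c<n$, on which $l$ induces a strictly convex function, so the inductive hypothesis applies directly. The heart of the argument is the remaining case $\tau=\zz$ with $c=n$. For it, take a simplicial subdivision $\pi:\hat\Sigma\to\Sigma$ together with a strictly convex $\hat l$ on $\hat\Sigma$ close to $\pi^*l$; the Decomposition Theorem gives $\pi_*\cL_{\hat\Sigma}=\bigoplus_\sigma W_\sigma\tensor\cL^\sigma$ with $W_\zz$ one-dimensional in degree $0$, so $IH(\Sigma)$ embeds as a distinguished direct summand of $IH(\hat\Sigma)$ stable under $l$. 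For the simplicial fan $\hat\Sigma$ one proves HL and HR directly by the bistellar flip argument: any two simplicial complete fans are connected by a sequence of flips and of paths within the cone of strictly convex functions, HL is an open condition, and the signature in HR is locally constant, so both properties propagate from a trivial base case.

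The remaining work is to transfer HL and HR from $IH(\hat\Sigma)$ to the summand $W_\zz\tensor IH(\Sigma)$, and then to upgrade from the specific $l$ to every strictly convex function by a continuity argument over the convex (hence connected) cone of strictly convex functions. The main obstacle is the transfer step: the decomposition $\pi_*\cL_{\hat\Sigma}=\bigoplus W_\sigma\tensor\cL^\sigma$ is not canonical, and one must verify that the intersection pairing of Section~3 restricts nondegenerately to the summand corresponding to $\zz$ with exactly the sign $(-1)^{(n-i)/2}$ required by HR. It is precisely this compatibility between the decomposition and the intersection pairing that forces the coupling of HL with HR throughout the induction and explains the parity conventions appearing in the statement.
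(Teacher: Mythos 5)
The paper does not itself prove this theorem: it is recalled as ``proved in \cite{Karu, BL2}.'' What the paper supplies around the statement is reduction scaffolding: the end of Section~2 identifies $IH(\st\tau)$ with $IH(\Sigma_\tau)$ for the non-pointed fan $\Sigma_\tau=\{\sigma+\Span\tau\mid\sigma\in\st\tau\}$, and Lemma~\ref{lem-HL-pointed} reduces non-pointed fans to pointed ones. Your reduction of the case $\tau\neq\zz$ to a complete fan in $V/\Span\tau$ is exactly this same move, so that part is in line with the paper.

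For the core case (pointed complete fan, $\tau=\zz$) your sketch reproduces the McMullen/Karu/Bressler--Lunts strategy that the paper cites, but two of its steps are materially underspecified. First, the bistellar flip step is not a continuity argument: openness of HL and local constancy of the HR signature only control what happens as $l$ varies over a \emph{fixed} simplicial fan, whereas a flip changes the fan discretely, and propagating HR across it requires the explicit transfer identity $Q_{\hat{l}}(\hat{h})=Q_{\tilde{l}}(\tilde{h})+Q_{\bar{l}}(\bar{h})$ of the type the present paper itself invokes in Section~5 for the relative theorems. Without that computation the induction over flips does not close. Second, you rightly flag that transferring HR from $IH(\hat\Sigma)$ to the summand $W_{\zz}\tensor IH(\Sigma)$ needs the intersection pairing to be compatible with the (non-canonical) decomposition $\pi_*\cL_{\hat\Sigma}=\bigoplus_\sigma W_\sigma\tensor\cL^\sigma$ and to restrict with the correct sign; this is precisely what the duality-functor machinery (culminating in results such as Lemma~\ref{lem-locPD}) is built to supply, and your proposal leaves it as an acknowledged obstacle rather than resolving it. So the architecture is the right one, but as written it is an outline with two essential gaps rather than a proof.
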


Hard Lefschetz theorem gives rise to the Lefschetz decomposition of $IH(\st\tau)$. Let the primitive cohomology be 
\[ \prim_l IH^{c-i}(\st\tau) = \ker l^{i+1} : IH^{c-i}(\st\tau) \to IH^{c+i+2}(\st\tau).\]
Then
\[ IH(\st\tau) = \bigoplus_{i\geq 0} \bigoplus_{j=0}^{i} l^j \prim_l IH^{c-i}(\st\tau).\]

Hard Lefschetz theorem also gives the first part of the following corollary. The second part follows from the duality discussed below.

\begin{corollary} \label{cor-degrees}
For any $\tau < \sigma$ in $\Sigma$,
\begin{enumerate}
  \item $\cL^\tau_\sigma = \cL^\tau([\sigma])$ is a free graded $\cA_\sigma$-module generated in degrees $< \codim\tau-\codim\sigma$.
  \item $\cL^\tau([\sigma],\partial\sigma)$ is a free graded $\cA_\sigma$-module generated in degrees $> \codim\tau-\codim\sigma$.
\end{enumerate}
\end{corollary}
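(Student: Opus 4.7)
The strategy is to derive part~(1) from Theorem~\ref{thm-HL} via an induction on $c := \codim\tau - \codim\sigma = \dim\sigma - \dim\tau$, and part~(2) from the \Po duality pairing developed in the next section. By Nakayama applied to the local ring $(\cA_\sigma, m_\sigma)$, together with the fact that $\cL^\tau_\sigma$ is by definition the minimal graded free $\cA_\sigma$-cover of $\cL^\tau(\partial\sigma)$, a homogeneous minimal generating set of $\cL^\tau_\sigma$ corresponds bijectively to a basis of the graded quotient $\overline{\cL^\tau(\partial\sigma)} := \cL^\tau(\partial\sigma)/m_\sigma\cL^\tau(\partial\sigma)$; hence part~(1) is equivalent to the degree-vanishing $\overline{\cL^\tau(\partial\sigma)}^k = 0$ for all $k \geq c$.

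To set up the induction for part~(1), first reduce to $\tau = \zz$ by passing to the quotient $V/\Span(\tau)$, which carries $\st\tau$ to a fan on which $\cL^\tau$ descends to the intersection cohomology sheaf $\cL$, with gradings preserved; now $c = \dim\sigma =: d$. Next, embed $[\sigma]$ as an open subfan of a complete fan $\Delta$ in $V = \Span(\sigma)$ of dimension $d$. The intrinsic local construction of the IH sheaf gives $\cL_\Delta|_{[\sigma]} = \cL_{[\sigma]}$, and in particular $\cL_\Delta(\partial\sigma) = \cL(\partial\sigma)$. Flabbiness of $\cL_\Delta$ then yields a surjection $\cL_\Delta(\Delta) \twoheadrightarrow \cL(\partial\sigma)$ that descends modulo $m = m_\sigma$ to $IH(\Delta) \twoheadrightarrow \overline{\cL(\partial\sigma)}$. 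Stratify $\cL(\partial\sigma)$ via a Mayer--Vietoris filtration from the face poset of $\partial\sigma$, whose associated graded is assembled from stalks $\cL_\nu$ for proper faces $\nu$ of $\sigma$. The inductive hypothesis bounds each $\cL_\nu$ by generators in degrees $< \dim\nu \le d-1$, and combining this with Theorem~\ref{thm-HL} applied to a strictly convex $l$ on $\Delta$, which makes $IH(\Delta) = \overline{\cL_\Delta(\Delta)}$ a \LO module symmetric about degree $d$, forces $\overline{\cL(\partial\sigma)}^k = 0$ for $k \ge d$, completing part~(1).

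For part~(2), the \Po duality pairing of the next section gives, on the quasi-convex fan $[\sigma]$, a perfect pairing of free graded $\cA_\sigma$-modules
\[ \cL^\tau_\sigma \otimes_{\cA_\sigma} \cL^\tau([\sigma],\partial\sigma) \longrightarrow \cA_\sigma(-2c), \]
with the shift $-2c$ fixed by the top nondegenerate degree. This identifies $\cL^\tau([\sigma],\partial\sigma)$ with the shifted graded $\cA_\sigma$-linear dual $\Hom_{\cA_\sigma}(\cL^\tau_\sigma,\cA_\sigma)(-2c)$, so part~(1) translates immediately into the assertion that the generators of $\cL^\tau([\sigma],\partial\sigma)$ lie in degrees $> c$, which is part~(2).

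The main obstacle is the Mayer--Vietoris bookkeeping in part~(1): turning the global Lefschetz symmetry on $IH(\Delta)$ into a local top-degree vanishing for $\overline{\cL(\partial\sigma)}$ requires carefully coordinating the inductive degree bounds at all proper faces of $\sigma$ with the induced filtration on $\cL(\partial\sigma)$, and in particular using both the symmetry of $IH(\Delta)$ and the partial vanishing coming from the induction. A secondary delicate point, for part~(2), is pinning down the exact degree shift in \Po duality when $\tau$ is not the minimal cone.
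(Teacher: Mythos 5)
Your reduction to $\tau = \zz$ and your derivation of part~(2) from part~(1) via the \Po duality pairing on the quasi-convex fan $[\sigma]$ are both correct and match the paper's indication ("the second part follows from the duality discussed below"); the degree shift $-2c$ is the right one, being $2\codim_{\Span\sigma}\tau$ when one views $[\sigma]$ as a fan in $\Span\sigma$. The Nakayama reduction of part~(1) to the vanishing $\overline{\cL(\partial\sigma)}^k = 0$ for $k \geq d := \dim\sigma$ is also correct.

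However, your argument for this vanishing has a genuine gap, which you yourself half-acknowledge as the ``Mayer--Vietoris bookkeeping.'' Embedding $[\sigma]$ in a complete $d$-dimensional fan $\Delta$ and quoting the \HL symmetry of $IH(\Delta)$ about degree $d$ cannot by itself give the bound: $IH(\Delta)$ is nonzero up to degree $2d$, and the surjection $IH(\Delta) \twoheadrightarrow \overline{\cL(\partial\sigma)}$ only gives vanishing above $2d$. The auxiliary filtration you propose also does not carry the weight you put on it: the associated graded of the natural (decreasing) filtration of $\cL(\partial\sigma)$ by skeleta has pieces $\cL([\nu],\partial\nu)$, not the stalks $\cL_\nu$, so controlling it needs part~(2) at smaller faces (circular, or at best forcing you to set up a simultaneous induction you have not stated), and even then quotienting a free $\cA_\nu$-module by the larger ideal $m_\sigma$ does not produce the advertised degree bound without more work. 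The key idea you are missing is a \emph{dimension drop}: project $\partial\sigma$ radially from an interior point of $\sigma$ to get a complete fan $\Lambda_\sigma$ of dimension $d-1$ in $\Span\sigma / \RR l$, where $l \in \cA_\sigma^2$ is any linear form positive on the interior of $\sigma$. Then $\cA_\sigma \cong A_{\Lambda_\sigma}[l]$, the graded space $\cL(\partial\sigma)$ is identified with $\cL(\Lambda_\sigma)$ with $l$ acting as a strictly convex piecewise linear function, and
\[
\overline{\cL(\partial\sigma)} \;=\; \cL(\partial\sigma)/m_\sigma\cL(\partial\sigma) \;\cong\; IH(\Lambda_\sigma)\big/\, l\cdot IH(\Lambda_\sigma).
\]
\HL for $\Lambda_\sigma$ (Theorem~\ref{thm-HL} in dimension $d-1$, no induction needed) makes $l: IH^{m-2}(\Lambda_\sigma)\to IH^m(\Lambda_\sigma)$ surjective for all $m \geq d$, which is exactly the vanishing you want. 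This is the intended one-step consequence of HL alluded to in the paper, with no Mayer--Vietoris filtration required.
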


\begin{theorem} \label{thm-mor}
Let $\phi: \cL^\sigma\to \cL^\tau(j)$ be a nonzero morphism in $\fM$. Then $j\geq \codim\tau-\codim\sigma$. If the equality holds, then $\sigma=\tau$, $j=0$ and $\phi_\sigma$ is nonzero.  
\end{theorem}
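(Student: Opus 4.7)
My strategy is to restrict $\phi$ to the stalk at a minimal cone where it is nonzero, observe that the image there lies in the boundary-vanishing sections, and invoke the degree bounds of Corollary~\ref{cor-degrees}. Since $\cL^\sigma$ (resp.\ $\cL^\tau$) is supported on $\st\sigma$ (resp.\ $\st\tau$), any cone $\rho$ with $\phi_\rho \neq 0$ must satisfy $\rho \geq \sigma$ and $\rho \geq \tau$. Let $\rho$ be minimal among such cones.

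First I would check that $\phi_\rho$ factors through the boundary-vanishing submodule $\cL^\tau([\rho],\partial\rho)(j) \subset \cL^\tau(j)_\rho$. For any proper face $\nu < \rho$, either $\nu \not\geq \sigma$, in which case $\cL^\sigma_\nu = 0$, or else $\phi_\nu = 0$ by the minimality of $\rho$. In either case, $\res^\rho_\nu \circ \phi_\rho = \phi_\nu \circ \res^\rho_\nu = 0$, so the image of $\phi_\rho$ vanishes on $\partial\rho$.

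Next I would pick a homogeneous generator $g$ of the free $\cA_\rho$-module $\cL^\sigma_\rho$ with $\phi_\rho(g) \neq 0$; such a $g$ exists because $\phi_\rho(\cL^\sigma_\rho)$ is $\cA_\rho$-generated by the images of the generators of $\cL^\sigma_\rho$. Set $d = \deg g$. By Corollary~\ref{cor-degrees}(1), $d = 0$ if $\sigma = \rho$ and $d < \codim\sigma - \codim\rho$ if $\sigma < \rho$. The element $\phi_\rho(g)$ is a nonzero homogeneous element of $\cL^\tau([\rho],\partial\rho)$ of degree $d+j$. By Corollary~\ref{cor-degrees}(2) this degree satisfies $d + j > \codim\tau - \codim\rho$ when $\tau < \rho$, whereas when $\tau = \rho$ we have $\cL^\tau([\rho],\partial\rho) = \cA_\rho$ and only $d + j \geq 0$. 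A short case analysis on the four possibilities for $(\sigma,\tau)$ relative to $\rho$ then yields $j \geq \codim\tau - \codim\sigma$, with strict inequality in every case except $\sigma = \tau = \rho$, $d = 0$, $j = 0$; in that last case $\phi_\sigma = \phi_\rho$ is nonzero, giving all three claims at equality. The only substantive step is the reduction to boundary-vanishing sections at the minimal cone $\rho$; the rest is routine degree bookkeeping via Corollary~\ref{cor-degrees}.
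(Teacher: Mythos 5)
Your proof is correct and follows essentially the same route as the paper: pick a minimal cone $\rho$ where $\phi_\rho\neq 0$, observe that $\phi_\rho$ then maps $\cL^\sigma_\rho$ into the boundary-vanishing sections $\cL^\tau([\rho],\partial\rho)(j)$, and read off the degree inequality from Corollary~\ref{cor-degrees}. You merely spell out the support constraints and the four-way case analysis that the paper compresses into the phrases ``with equality only if $\sigma=\nu$'' and ``with equality only if $\tau=\nu$.''
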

\begin{proof}
Let $\nu\in\Sigma$ be a minimal cone for which $\phi_\nu$ is nonzero. Then $\phi_\nu$ gives a nonzero degree $j$ map
\[ \cL^\sigma([\nu]) \to \cL^\tau([\nu],\partial\nu).\]
By Corollary~\ref{cor-degrees}, $\cL^\sigma([\nu])$ has generators in degrees $\leq \codim\sigma - \codim\nu$, with equality only if $\sigma=\nu$, and $\cL^\tau([\nu],\partial\nu)$ has generators in degrees $\geq \codim\tau-\codim\nu$, with equality only if $\tau=\nu$. It follows that 
\[ j \geq (\codim\tau-\codim\nu)- (\codim\sigma - \codim\nu) = \codim\tau-\codim\sigma,\]
and equality holds only if $\sigma=\nu=\tau$.
\end{proof}

The theorem can be restated as follows. If $\phi: \cL^\sigma(\codim\sigma) \to \cL^\tau(\codim\tau+j)$ is a nonzero morphism in $\fM$ then $j\geq 0$, and equality holds only if $\sigma=\tau$ and $\phi_\sigma \neq 0$.

\begin{corollary}
\[ \Hom_\fM(\cL^\tau,\cL^\tau) = \Hom_A(\cL^\tau_\tau,\cL_\tau^\tau) = \RR.\]
\end{corollary}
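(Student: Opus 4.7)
The plan is to prove both equalities at once by restricting any endomorphism to the stalk at $\tau$ and invoking Theorem~\ref{thm-mor} to show that this restriction is an isomorphism onto a one-dimensional space.

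First I would verify the right-hand equality $\Hom_A(\cL^\tau_\tau, \cL^\tau_\tau) = \RR$. Since morphisms in $\fM$ preserve degree, this Hom space consists of degree-$0$ $A$-module homomorphisms $\cA_\tau \to \cA_\tau$. The stalk $\cL^\tau_\tau = \cA_\tau$ is a free $\cA_\tau$-module of rank one, generated in degree $0$, and $\cA_\tau$ itself is a polynomial ring over $\RR$ in generators of positive degree (recall that linear functions have degree $2$). Thus any such homomorphism is determined by the image of the generator $1$, which must be a degree-$0$ element of $\cA_\tau$, i.e.\ a real scalar.

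Second, I would consider the restriction map
\[ r \colon \Hom_\fM(\cL^\tau, \cL^\tau) \longrightarrow \Hom_A(\cL^\tau_\tau, \cL^\tau_\tau), \qquad \phi \longmapsto \phi_\tau. \]
Surjectivity is immediate because every scalar $c \in \RR$ is the stalk at $\tau$ of $c \cdot \Id_{\cL^\tau}$, which is a morphism in $\fM$ since scalar multiplication commutes with all restriction and $\cA_\Sigma$-module structure maps. Injectivity is precisely the content of Theorem~\ref{thm-mor} applied with $\sigma = \tau$ and $j = 0$: any nonzero degree-preserving morphism $\phi \colon \cL^\tau \to \cL^\tau$ in $\fM$ must satisfy $\phi_\tau \neq 0$, so $\phi_\tau = 0$ forces $\phi = 0$.

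There is no real obstacle beyond these two steps, since Theorem~\ref{thm-mor} does the conceptual work. The only small point to watch is to keep the degree-preserving convention throughout, because $\Hom_A(\cA_\tau,\cA_\tau)$ without that restriction is the full ring $\cA_\tau$ rather than $\RR$; the corollary is really the statement that this Hom space is concentrated in degree $0$.
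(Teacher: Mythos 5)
Your proof is correct and is exactly the intended argument: the paper states the corollary right after Theorem~\ref{thm-mor} with no separate proof, because the injectivity-of-restriction point ($\phi_\tau=0 \Rightarrow \phi=0$) is precisely the equality case of that theorem, and the identification of the degree-zero $A$-endomorphisms of the cyclic graded module $\cA_\tau$ with $\RR$ is immediate. Nothing further to add.
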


We also mention a more precise form of the decomposition theorem.

\begin{lemma}\label{lem-prec-decomp}
Let $\cF$ in $\fM$ have a decomposition $\cF = \oplus_\tau W_\tau \otimes \cL^\tau$. Then
\[  W_\tau \isom \ker \big(\overline{\cF}_\tau \to \overline{\cF(\partial\tau)}\big)
  = \img \big(\overline{\cF([\tau],\partial\tau)} \to \overline{\cF}_\tau\big).\]
\end{lemma}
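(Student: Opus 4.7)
The plan is to push the decomposition $\cF = \oplus_\sigma W_\sigma \otimes \cL^\sigma$ through the two functors $\cdot_\tau$ and $\cdot(\partial\tau)$, then reduce modulo the maximal ideal $m \subset A$ and compare. The key input will be the minimality built into the inductive construction of each $\cL^\sigma$.

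First I would compute the pieces explicitly. Since $\cL^\sigma$ is supported on $\st\sigma$, the stalk $\cL^\sigma_\tau$ vanishes unless $\sigma \leq \tau$, so
\[ \cF_\tau = \bigoplus_{\sigma \leq \tau} W_\sigma \otimes \cL^\sigma_\tau. \]
For the boundary sections, no face in $\partial\tau$ lies in $\st\tau$, hence $\cL^\tau(\partial\tau)=0$ and
\[ \cF(\partial\tau) = \bigoplus_{\sigma < \tau} W_\sigma \otimes \cL^\sigma(\partial\tau).\]
The restriction $\cF_\tau \to \cF(\partial\tau)$ decomposes as the direct sum of maps $f_\sigma:\cL^\sigma_\tau \to \cL^\sigma(\partial\tau)$, where $f_\tau$ is the zero map from $\cA_\tau$ to $0$, and for $\sigma<\tau$ the map $f_\sigma$ is by definition the minimal projective cover used to build $\cL^\sigma_\tau$.

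Now I would exploit minimality. For $\sigma<\tau$, minimality says $\ker f_\sigma \subset m_\tau \cdot \cL^\sigma_\tau$, where $m_\tau$ is the maximal homogeneous ideal of $\cA_\tau$. Because any polynomial function in $I_\tau$ vanishes at the origin, $I_\tau \subset m$, so $m\cdot\cA_\tau = m_\tau$ and reduction by $m$ agrees with reduction by $m_\tau$ on $\cA_\tau$-modules. Therefore $\overline{f_\sigma}$ is an isomorphism for every $\sigma<\tau$, while $\overline{f_\tau}:\overline{\cA_\tau}=\RR\to 0$ is zero. Since reduction mod $m$ commutes with direct sums, the kernel of $\overline{\cF}_\tau \to \overline{\cF(\partial\tau)}$ is exactly $W_\tau \otimes \RR = W_\tau$, giving the first identification.

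For the second equality I would invoke flabbiness: by definition of $\fM$ the restriction $\cF_\tau \to \cF(\partial\tau)$ is surjective, so there is a short exact sequence
\[ 0 \to \cF([\tau],\partial\tau) \to \cF_\tau \to \cF(\partial\tau) \to 0.\]
Tensoring with $A/m$ (right exact) yields the exact sequence
\[ \overline{\cF([\tau],\partial\tau)} \to \overline{\cF}_\tau \to \overline{\cF(\partial\tau)} \to 0,\]
so the image on the left coincides with the kernel on the right, finishing the proof. The only genuinely nontrivial point is the kernel computation in the previous paragraph, where the main subtlety is that minimality of the projective cover is precisely what forces it to become an isomorphism after reducing modulo $m$; everything else is formal manipulation.
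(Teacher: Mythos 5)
Your proof is correct and follows essentially the same route as the paper's (which is stated very tersely): the key step in both is that for $\sigma<\tau$ the map $\cL^\sigma_\tau \to \cL^\sigma(\partial\tau)$ is by construction a minimal projective cover, hence becomes an isomorphism after reduction mod $m$, while the $\sigma=\tau$ summand reduces to $\RR\to 0$. Your extra remarks — identifying $m\cdot\cA_\tau$ with $m_\tau$, and deducing the second equality from flabbiness plus right-exactness of $\otimes_A A/m$ — are precisely the details the paper leaves implicit.
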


\begin{proof}
For any $\sigma\in\Sigma$ the map
\[ \overline{\cL^\tau_\sigma} \to \overline{\cL^\tau(\partial\sigma)} \]
is zero when $\sigma=\tau$ and an isomorphism otherwise. 
This gives the stated result.
\end{proof}

Note that the image of $W_\tau$ in  $\overline{\cF}_\tau$ is canonical, given by the lemma. However, the embedding $W_\tau\otimes \cL^\tau \subset \cF$ depends on the lifting of this subspace to an embedding of free $\cA_\tau$-modules.

As an example, when $\pi:\hat\Sigma\to\Sigma$ is a subdivision of fans, then in the decomposition $\pi_* \cL_{\hat\Sigma} = \oplus_\tau W_\tau \otimes \cL^\tau$ we have
\[ W_\tau \isom \img \big(IH(\hat\tau,\partial\hat\tau) \to IH(\hat\tau)\big),\]
where $\hat\tau \subset \hat\Sigma$ is the inverse image of $[\tau]$. When $\tau\in\Sigma$ is a maximal cone, then $W_\tau = \overline{W_\tau\tensor \cL^\tau(\Sigma)}$ is canonically a subspace of $IH(\hat\Sigma)$.

\subsection{Non-pointed fans} \label{sec-npfans}

The results listed above were proved in \cite{BBFK, BL1, BL2, Karu} for  pointed fans. However, it is not difficulty to extend these results to non-pointed fans.
We show some of the steps needed to reduce from the case of non-pointed fans to the case of pointed fans.

Let $\Sigma$ be a fan in $V$ and consider the fan $\Sigma\times\RR^m$ in $V\times \RR^m$, with projection $\pi: \Sigma\times\RR^m\to \Sigma$. Let $A$ be the ring of polynomial functions on $V$ and $B$ the ring of polynomial functions on $V\times\RR^m$. Define the pullback functor $\pi^*: \fM(\Sigma)\to \fM(\Sigma\times\RR^m)$ by 
\[ \pi^*(\cF) = \cF \otimes_A B.\]
It follows from the definitions that if $\cF$ lies in $\fM(\Sigma)$, then $\pi^*(\cF)$ lies in $\fM(\Sigma\times\RR^m)$. In particular, $\pi^*(\cL^\tau) = \cL^{\tau\times\RR^m}$. It is straight-forward to show that the decomposition theorem holds for non-pointed fans. This implies that the functor $\pi^*$ is essentially surjective. Since the operation $\cdot \otimes_A B$ is an exact functor on $A$-modules, we get for global sections
\[ \pi^*(\cF) (\Sigma\times\RR^m) = \cF(\Sigma)\otimes_A B,\]
and in case of a quasi-convex fan $\Sigma$, 
\[ \pi^*(\cF) (\Sigma\times\RR^m,\partial(\Sigma\times\RR^m)) = \cF(\Sigma,\partial\Sigma)\otimes_A B.\]

\begin{lemma}
Assume that $\Sigma$ is quasi-convex and let $\cF \in \fM(\Sigma)$. Then
\begin{enumerate}
  \item $\cF(\Sigma)$ is a free $A$-module.
  \item $\cF(\Sigma,\partial\Sigma)$ is a free $A$-module.
\end{enumerate}

\end{lemma}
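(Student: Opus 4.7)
The plan is to reduce to the case of pointed quasi-convex fans, where the statement is established in \cite{BBFK}. Write $\Sigma = \Sigma_0\times\RR^m$ with $\Sigma_0$ a pointed fan in a complementary subspace $V_0\subset V$, and let $\pi:\Sigma\to\Sigma_0$ be the projection of fans. Let $A_0$ denote the ring of polynomial functions on $V_0$, so that $A\isom A_0\otimes_\RR\RR[y_1,\ldots,y_m]$; in particular $A$ is a free $A_0$-module.

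The first task is to verify that $\Sigma_0$ is itself quasi-convex. Because $\partial\Sigma = \partial\Sigma_0\times\RR^m$, the two boundary fans differ only by a product with the contractible factor $\RR^m$, so the generalized homology sphere condition on $\partial\Sigma$ will descend to $\partial\Sigma_0$ via a routine link/suspension computation. I expect this step to be the only mildly technical point of the argument; the rest is essentially formal.

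Next I will lift $\cF$ along $\pi^*$. By the essential surjectivity of $\pi^*:\fM(\Sigma_0)\to\fM(\Sigma)$ noted just before the lemma, there exists $\cF_0\in\fM(\Sigma_0)$ with $\cF\isom\pi^*(\cF_0)$. The two displayed identities immediately preceding the lemma then give
\[ \cF(\Sigma) = \cF_0(\Sigma_0)\otimes_{A_0}A, \qquad \cF(\Sigma,\partial\Sigma) = \cF_0(\Sigma_0,\partial\Sigma_0)\otimes_{A_0}A. \]
Applying the pointed case of the lemma to $\Sigma_0$, the two $A_0$-modules on the right are free. Extending scalars along the free ring extension $A_0\to A$ preserves freeness, yielding the two freeness statements for $\cF$ as an $A$-module.
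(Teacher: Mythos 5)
Your argument is correct and is essentially the paper's own: write $\Sigma = \Sigma_0 \times \RR^m$ with $\Sigma_0$ pointed, lift $\cF$ along $\pi^*$, apply the known pointed quasi-convex case, and extend scalars along the free extension $A_0 \to A$. The point you flag about $\Sigma_0$ inheriting quasi-convexity is left implicit in the paper too, so you are in good company; otherwise the two proofs match step for step.
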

\begin{proof}
Let $\Sigma=\overline\Sigma \times\RR^m$ for some pointed fan $\overline\Sigma$ and $\pi:\Sigma\to\overline{\Sigma}$ the projection. Then $\cF\isom \pi^*(\cG)$ for some $\cG\in \fM(\overline\Sigma)$. From the pointed case we know that $\cG(\overline{\Sigma})$ and $\cG(\overline{\Sigma},\partial\overline\Sigma)$ are free $\bar{A}$-modules, where $\bar{A}$ is the ring of polynomial function on the ambient space of $\overline{\Sigma}$. Now $\cF(\Sigma)$ and $\cF(\Sigma,\partial\Sigma)$  are obtained from these by applying the operation $ \cdot\otimes_{\bar{A}} A $, and hence are free $A$-modules.
\end{proof}

Consider again the fan $\Sigma\times\RR^m$. Note that for any $\cF$ in $\fM(\Sigma)$,
\begin{equation}\label{eq-ih} 
\overline{ \pi^* \cF(\Sigma \times\RR^m)} = \overline{\cF(\Sigma)},\end{equation}
and when $\Sigma$ is quasi-convex, then 
\[ \overline{ \pi^* \cF(\Sigma \times\RR^m, \partial(\Sigma\times\RR^m))} = \overline{\cF(\Sigma,\partial\Sigma)}.\]
Let $l\in\cA^2(\Sigma\times\RR^m)$ act on $\overline{ \pi^* \cF(\Sigma \times\RR^m)}$ by multiplication. We may change $l$ by a linear function in $B$ without changing the action. Thus, we may assume that $l$ vanishes on the minimal cone $\zz \times\RR^m$, and hence is the pullback of some function $\overline{l}$ in $\cA^2(\Sigma)$. Now the two functions act the same way on the spaces (\ref{eq-ih}). Thus we get:

\begin{lemma} \label{lem-HL-pointed}
Theorem~\ref{thm-HL} (HL) for pointed fans implies the same theorem for general fans.
\end{lemma}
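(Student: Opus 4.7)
The plan is to show that every instance of HL for a non-pointed complete fan can be pulled back from an instance of HL for a pointed complete fan, using the machinery already established in Section~\ref{sec-npfans}. Let $\Sigma'$ be a complete non-pointed fan and write $\Sigma' = \overline\Sigma \times \RR^m$ for a pointed complete fan $\overline\Sigma$ in $V$, with projection $\pi: \Sigma' \to \overline\Sigma$. Any cone in $\Sigma'$ is of the form $\tau' = \tau\times\RR^m$ for $\tau \in \overline\Sigma$, and by the essential surjectivity of $\pi^*$ we have $\cL^{\tau'} = \pi^*(\cL^\tau)$. Applying equation (\ref{eq-ih}) to $\cF=\cL^\tau$ gives a canonical isomorphism of graded vector spaces
\[ IH(\st\tau') = \overline{\cL^{\tau'}(\Sigma')} \;\isom\; \overline{\cL^\tau(\overline\Sigma)} = IH(\st\tau).\]
Moreover $\codim \tau' = \codim \tau$ because the minimal cone is $\zz = \RR^m$ and codimension is measured in the ambient space.

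Next I would reduce the strictly convex piecewise linear function on $\Sigma'$ to one on $\overline\Sigma$. Let $l\in\cA^2(\Sigma')$ be strictly convex. Its restriction to the minimal cone $\RR^m$ is an honest linear function on $\RR^m$, which extends (via projection $V\times\RR^m \to \RR^m$) to a global linear function $l_0 \in B^2$. Replace $l$ by $l - l_0$. Subtracting a globally linear function preserves strict convexity (linear functions are linear on every cone, so the maximal linearity domains of $l-l_0$ are still exactly the maximal cones of $\Sigma'$), and subtracting an element of $m_B$ does not change the induced map on $\overline{\cL^{\tau'}(\Sigma')}$. After this modification $l$ vanishes on $\RR^m$, hence factors through the projection: $l = \pi^*(\bar l)$ for some $\bar l \in \cA^2(\overline\Sigma)$. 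Since the maximal cones of $\Sigma'$ are exactly the preimages under $\pi$ of the maximal cones of $\overline\Sigma$, the maximal linearity loci of $\bar l$ are the maximal cones of $\overline\Sigma$, so $\bar l$ is strictly convex on the pointed fan $\overline\Sigma$.

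Finally I would check compatibility of the Lefschetz maps. Multiplication by $l = \pi^*(\bar l)$ on $\cL^{\tau'}(\Sigma') = \cL^\tau(\overline\Sigma)\otimes_{\overline A} B$ is, after reducing modulo $m_B$, the same as multiplication by $\bar l$ on $\overline{\cL^\tau(\overline\Sigma)}= IH(\st\tau)$. Thus, under the identification $IH^{\bullet}(\st\tau') \isom IH^{\bullet}(\st\tau)$, the iterated map
\[ l^i : IH^{c-i}(\st\tau') \to IH^{c+i}(\st\tau') \qquad (c=\codim\tau'=\codim\tau)\]
is identified with $\bar l^i : IH^{c-i}(\st\tau) \to IH^{c+i}(\st\tau)$, which is an isomorphism by Theorem~\ref{thm-HL} applied to the pointed complete fan $\overline\Sigma$, the strictly convex function $\bar l$, and the cone $\tau$. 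This establishes HL for $\Sigma'$.

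The argument is essentially a bookkeeping exercise given the structural results already assembled in Section~\ref{sec-npfans}; the only non-trivial point worth spelling out is that subtracting the extended linear function $l_0$ preserves strict convexity and leaves the action on the reduction unchanged, so that the descent $l \mapsto \bar l$ is legitimate. Once that is in place, the lemma follows immediately from applying pointed-fan HL to $(\overline\Sigma, \bar l, \tau)$.
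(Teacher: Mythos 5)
Your proof is correct and follows the same route as the paper: identify the intersection cohomology spaces via equation~(\ref{eq-ih}), subtract a global linear function so that $l$ vanishes on the minimal cone and hence descends to a strictly convex $\bar l$ on the pointed fan, then invoke pointed-fan HL. The only difference is that you spell out explicitly that the subtraction preserves strict convexity and that the descended function is strictly convex on $\overline\Sigma$, which the paper leaves implicit.
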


Next let us consider subdivisions of fans and the functor $\pi_*$ in the non-pointed case.

\begin{lemma}
Let $\pi: \hat\Sigma \to \Sigma$ be a subdivision of fans. Then $\pi_*$ defines a functor from $\fM(\hat\Sigma)$ to $\fM(\Sigma)$.
\end{lemma}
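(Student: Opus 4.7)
The plan is to verify, for each cone $\sigma \in \Sigma$, the two defining properties of $\fM(\Sigma)$ for the pushforward $\pi_*\cF$: the stalk $(\pi_*\cF)_\sigma = \cF(\pi^{-1}([\sigma]))$ should be a free graded $\cA_\sigma$-module of finite rank, and the restriction
\[ (\pi_*\cF)_\sigma \to (\pi_*\cF)(\partial\sigma) = \cF(\pi^{-1}(\partial\sigma)) \]
should be surjective. Since $\pi$ is induced by $p = \Id_V$, the $\cA_\Sigma$-module structure on $\pi_*\cF$ is automatic from pulling back polynomial functions cone by cone.

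For the freeness, I would observe that $\pi^{-1}([\sigma]) = \{\tau \in \hat\Sigma : \tau \subset \sigma\}$ is a subfan of $\hat\Sigma$ whose support equals $\sigma$. Viewed as a fan in its own ambient space $\Span(\sigma)$ rather than in $V$, it is full-dimensional with convex support, hence a convex fan with boundary $\pi^{-1}(\partial\sigma)$, and therefore quasi-convex. The polynomial ring on $\Span(\sigma)$ is precisely $\cA_\sigma$, and the restriction $\cF|_{\pi^{-1}([\sigma])}$ lies in $\fM(\pi^{-1}([\sigma]))$. Applying the previous lemma (in its non-pointed form, just established) yields that $\cF(\pi^{-1}([\sigma]))$ is a free graded $\cA_\sigma$-module; finite rank follows because $\cF$ is of finite type and $\pi^{-1}([\sigma])$ contains only finitely many cones.

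For flabbiness, I would show that any $\cF \in \fM(\hat\Sigma)$ is flabby in the sheaf-theoretic sense, i.e.\ $\cF(U) \to \cF(U')$ is surjective for every inclusion of open subfans $U' \subset U$ of $\hat\Sigma$. One proves this by induction on the cardinality of $U \setminus U'$, adding a minimal cone $\tau \in U \setminus U'$ at a time and using the defining surjection $\cF_\tau \twoheadrightarrow \cF(\partial\tau)$ to extend sections across $[\tau]$. Applied to $\pi^{-1}(\partial\sigma) \subset \pi^{-1}([\sigma])$, this gives the required surjectivity. The main subtlety in the whole argument is the ambient-space identification in the freeness step: one must genuinely view $\pi^{-1}([\sigma])$ inside $\Span(\sigma)$ rather than in $V$ in order to extract freeness over $\cA_\sigma$ (and not merely over $A$) from the preceding lemma, and this is precisely why the non-pointed extension of that lemma was needed first, since $\sigma$ may be non-pointed even when $\hat\Sigma$ has only pointed cones.
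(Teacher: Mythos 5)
Your proof is correct and follows essentially the same route as the paper's: identify the stalk $(\pi_*\cF)_\sigma$ with $\cF(\pi^{-1}([\sigma]))$, view $\pi^{-1}([\sigma])$ as a quasi-convex fan in $\Span\sigma$ so that the preceding lemma gives freeness over $\cA_\sigma$, and note that flabbiness of $\cF$ gives the required surjectivity onto $\cF(\pi^{-1}(\partial\sigma))$. The only difference is that the paper dismisses the flabbiness step as clear, whereas you spell out the standard one-cone-at-a-time induction; your emphasis on why the non-pointed version of the freeness lemma is the relevant one matches the paper's own logic.
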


\begin{proof}
Let $\cF \in \fM(\hat\Sigma)$. Then $\pi_*(\cF)$ is clearly a graded flabby sheaf of $\cA_\Sigma$-modules. We need to show that it is locally free. For $\sigma\in\Sigma$ let $\hat\sigma\subset \hat\Sigma$ be the inverse image of $[\sigma]$. Then $\pi_*(\cF)_\sigma = \cF(\hat\sigma)$. However, $\hat\sigma$ is a quasi-convex fan in $\Span\sigma$ and hence $\cF(\hat\sigma)$ is a free $\cA_\sigma$-module.
\end{proof}

The Hard Lefschetz theorem, Theorem~\ref{thm-HL}, is usually stated for the intersection cohomology $IH(\Sigma)$ only.
The case of $IH(\st\tau)$ can be reduced to it as follows. Given $\tau\in\Sigma$, define a new fan 
\[ \Sigma_\tau  = \{ \sigma+\Span\tau | \sigma\in \st\tau\}.\]
Then $\Sigma_\tau$ and $\st\tau$ are isomorphic as ringed spaces over $A$, hence $IH(\st\tau)= IH(\Sigma_\tau)$. If $\Sigma$ is complete, then so is $\Sigma_\tau$. Now the \HL theorem for $IH(\Sigma_\tau)$ implies the same theorem for $IH(\st\tau)$.

\section{Intersection pairing.}

In this section we study the \Po duality pairing on $IH(\Sigma)$ constructed in \cite{BL1, BL2}. We will need the pairing for non-pointed fans. For a non-pointed fan $\Sigma$, the intersection pairing on it can be reduced to the case of a pointed fan easily. The difficulty lies in comparing the pairings on a non-pointed fan and its pointed subdivision. To handle this case, we need to explain the definitions and constructions of Bresler-Lunts \cite{BL1, BL2}. We will rewrite some of the proofs and refer to \cite{BL1, BL2} whenever possible. An alternative construction of the duality functor and intersection pairing is given in \cite{BBFK2}.

\subsection{Intersection pairing for pointed fans.}

When $\Sigma$ is a complete simplicial fan of dimension $n$ in $V$, then Brion \cite{Brion} constructs a nondegenerate symmetric A-bilinear pairing 
\[ \cA(\Sigma)\times\cA(\Sigma) \to A(-2n).\]
The pairing is given by multiplication of piecewise polynomial functions, followed by a morphism $\cA(\Sigma)\to A(-2n)$, determined by a volume form $\Omega\in \Lambda^n V^*$. 
This pairing gives the \Po duality pairing
\[ IH(\Sigma)\times IH(\Sigma) \to \RR(-2n).\]
Bressler and Lunts \cite{BL1} extend this pairing to the spaces $\cL_\Sigma(\Sigma)$ and $IH(\Sigma)$ for nonsimplicial $\Sigma$. They first construct a contravariant duality functor
\[ \DD: \fM \to \fM.\]
If $\Sigma$ is complete, then for any $\cF$ in $\fM$ there is a natural isomorphism
\begin{equation} \label{eq-duality}
   \DD(\cF)(\Sigma) \to \Hom_A (F(\Sigma), \omega),
\end{equation} 
where $\omega = \det V^* \tensor A$ is the dualizing module of $A$. 
The sheaf $\DD(\cL_\Sigma)$ is canonically isomorphic to $\cL_\Sigma$. The isomorphism $\omega \isom A(-2n)$ then gives a nondegenerate $A$-bilinear pairing 
\[ \cL_\Sigma(\Sigma)\times\cL_\Sigma(\Sigma) \to A(-2n)\]
and the \Po duality pairing
\[ IH(\Sigma)\times IH(\Sigma) \to \RR(-2n).\]
The choice of the volume form $\Omega$ is used in this construction twice: once in the isomorphism $\omega \isom A(-2n)$, and second time in the construction of the isomorphism (\ref{eq-duality}) in the role of giving an orientation on $V$. As a consequence, when the form $\Omega$ is changed by a nonzero constant, the pairing changes by a positive constant.

\Po duality pairing on quasi-convex fans was studied in \cite{BBFK}. In \cite{BL2} this pairing was described in terms of the duality functor $\DD$. In that case the isomorphism similar to (\ref{eq-duality}) gives a nondegenerate $A$-bilinear pairing
\[ \cL_\Sigma(\Sigma,\partial\Sigma)\times\cL_\Sigma(\Sigma) \to A(-2n),\]
inducing the \Po duality pairing 
\[ IH(\Sigma,\partial\Sigma)\times IH(\Sigma) \to \RR(-2n).\]

The canonical pairing of Bressler and Lunts agrees with Brion's pairing (up to a factor of $n!$) in the case of simplicial fans. The pairing is also compatible with subdivisions of fans. If $\hat\Sigma\to\Sigma$ is a subdivision, we can use the decomposition theorem to embed $\cL_\Sigma(\Sigma) \subset \cL_{\hat\Sigma}(\hat\Sigma)$. Then the pairing on $\cL_\Sigma(\Sigma)$ constructed above is equal to the restriction of the pairing on $\cL_{\hat\Sigma}(\hat\Sigma)$. We may choose $\hat\Sigma$ simplicial, represent elements in $\cL_\Sigma(\Sigma)$ as piecewise polynomial functions on $\hat\Sigma$, and pair them using Brion's method. This in particular shows that the pairing is symmetric and $\cA(\Sigma)$-bilinear. Similar statements hold for quasi-convex fans. We mention two results that can be proved by reducing to multiplication of functions in the simplicial case:
\begin{enumerate}
  \item If $f$ and $g$ are sections of $\cL_\Sigma$ with disjoint supports, then the pairing between them is zero.
  \item Let $\Sigma_1\subset\Sigma_2$ be an inclusion of quasi-convex fans of the same dimension. Then the pairings on $\Sigma_1$ and $\Sigma_2$ are compatible. Let $f\in \cL_{\Sigma_1}(\Sigma_1, \partial \Sigma_1)$ and $g\in \cL_{\Sigma_2}(\Sigma_2)$. Then
\[ \langle f,g\rangle_{\Sigma_1} = \langle f,g\rangle_{\Sigma_2}.\] 
   
\end{enumerate}

\subsection{Injective and Projective sheaves}

We will be working with derived categories of sheaves and derived functors. The category of sheaves on $\Sigma$ has enough injectives and projectives. Let $\RR_{[\sigma]}$ be the constant sheaf with stalk $\RR$ on $[\sigma]$, extended by zero to a sheaf on $\Sigma$, and let $\RR_{\st\sigma}$ be defined similarly. The sheaves $\RR_{[\sigma]}$ are projective and the sheaves   $\RR_{\st\sigma}$ are injective. Any sheaf can be resolved using direct sums of these injective or projective sheaves. For an arbitrary sheaf $F$
\[ \Hom(\RR_{[\sigma]}, F) = F_\sigma, \qquad \Hom(F, \RR_{\st\sigma}) = F^*_\sigma.\]

\subsection{The cellular complex} \label{sec-cell}

The bilinear pairings in \cite{BL1,BL2} are constructed with the help of the cellular complex. We  recall the definition of this complex.

Let us fix an orientation on every cone $\sigma\in\Sigma$. The cellular complex $\Cd(F) = \Cd(\Sigma,F)$ of a sheaf $F$ on $\Sigma$ is the complex 
\[ 0\to C^0\to C^1\to \cdots \to C^n\to 0,\]
where $C^i = \oplus_{\codim\sigma=i} F_\sigma$, and the map between $F_\sigma$ and $F_\tau$ in the complex is $\pm \res^\sigma_\tau$, where the sign depends on whether the orientations of $\sigma$ and $\tau$ agree or not.

Note that $\Cd(\Sigma, \cdot)$ defines a functor from the category of sheaves on $\Sigma$ to the category of complexes of vector spaces. This functor is exact. We extend it to a functor from the bounded derived category of sheaves on $\Sigma$ to the bounded derived category of complexes. 

Let $\Sigma$ be complete. Fix an orientation on $V$ and define the map
\begin{align*} 
\Gamma(\Sigma,F) &\to H^0(\Cd(\Sigma,F))\\
(f_\sigma)_\sigma &\mapsto (\pm f_\sigma)_{\codim\sigma=0},
\end{align*}
where the sign depends on whether the orientations of $\sigma$ and $V$ agree or not. It is easy to see that this map is an isomorphism. Moreover, since $\Cd$ is an exact functor, the map extends to a morphism in the derived category of complexes
\[ R\Gamma(F)\to \Cd(F).\]

\begin{lemma} When $\Sigma$ is complete then the map $R\Gamma(F)\to \Cd(F)$ is an isomorphism, natural in $F$.
\end{lemma}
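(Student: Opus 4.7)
The plan is to identify $\Cd$ with the right derived functor $R\Gamma$ in the derived category. First I would note that $\Cd(\Sigma,-)$ is exact on the abelian category of sheaves on $\Sigma$: in each cohomological degree it is a finite direct sum of stalk functors, and stalks are exact. Consequently $\Cd$ descends to a triangulated functor $D^b(\mathrm{Sh}(\Sigma))\to D^b(\mathrm{Vect})$. The map $\Gamma(\Sigma,F)\to H^0(\Cd(\Sigma,F))$ constructed before the lemma is already (by completeness of $\Sigma$) an isomorphism, and composing with $H^0\Cd\hookrightarrow\Cd$ gives a morphism $\Gamma\to\Cd$ in the derived category; the universal property of $R\Gamma$ then lifts it to the natural transformation $R\Gamma\to\Cd$ of triangulated functors mentioned just before the lemma.

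To prove this natural transformation is a quasi-isomorphism, by d\'evissage it suffices to verify it on a class of sheaves generating $D^b(\mathrm{Sh}(\Sigma))$. I would take the injective sheaves $\RR_{\st\sigma}$ as generators, since every sheaf on the finite poset $\Sigma$ admits a finite resolution by direct sums of these. For $I=\RR_{\st\sigma}$, injectivity gives $R\Gamma(I)=\Gamma(I)[0]\simeq\RR[0]$, while $\Cd(I)$ is supported on cones $\tau\geq\sigma$. Under the linear projection $V\to V/\Span\sigma$, the poset $\st\sigma$ is in order- and codimension-preserving bijection with the poset of all cones of a complete fan $\Sigma_\sigma$ in $V/\Span\sigma$ of dimension $n-\dim\sigma$; with compatible orientations this yields an isomorphism $\Cd(\Sigma,\RR_{\st\sigma})\simeq\Cd(\Sigma_\sigma,\RR_{\Sigma_\sigma})$. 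The problem thus reduces to showing $\Cd(\Sigma',\RR_{\Sigma'})\simeq\RR[0]$ for every complete fan $\Sigma'$ of dimension $n'$.

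For this remaining step I would re-index $C_k:=\Cd^{n'-k}(\Sigma',\RR_{\Sigma'})=\bigoplus_{\dim\sigma=k}\RR$, so that the differential $\partial_k:C_k\to C_{k-1}$ is exactly the boundary map of the regular CW decomposition of $V'\simeq\RR^{n'}$ whose open cells are the relative interiors of cones of $\Sigma'$, with $\{0\}$ as the unique $0$-cell. The subcomplex $\RR\cdot[\zz]$ in degree $0$ fits into a short exact sequence of chain complexes
\[ 0\to \RR\cdot[\zz]\to C_*\to C_*(S^{n'-1})[1]\to 0, \]
whose quotient is a degree shift of the cellular chain complex of the induced CW structure on the unit sphere. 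The long exact sequence in homology, combined with $H_k(S^{n'-1})$ being $\RR$ for $k=0,n'-1$ and zero otherwise, then forces $H_k(C_*)=\RR$ for $k=n'$ and zero otherwise; translating back, $H^i(\Cd(\Sigma',\RR_{\Sigma'}))=\RR$ exactly for $i=0$.

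The main obstacle will be a careful sign check: matching the differentials of $\Cd$ with those of the cellular chain complex under the reindexing, and verifying that the connecting map $\delta:H_0(S^{n'-1})\to H_0(\RR\cdot[\zz])$ is nonzero---equivalently, that $\partial_1:C_1\to C_0=\RR\cdot[\zz]$ is surjective. The latter follows from the fact that each ray has $\{0\}$ as its unique boundary $0$-cell and that consistently outward-oriented rays yield signs that agree rather than cancel, so $\partial_1\bigl(\sum_\rho a_\rho[\rho]\bigr)=\pm\bigl(\sum_\rho a_\rho\bigr)[\zz]$.
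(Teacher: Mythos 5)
Your proof is correct, but it takes a route that is essentially dual to the one in the paper. The paper works with \emph{projective} objects: it writes down the complex $P^\bullet$ with $P^{-i}=\bigoplus_{\codim\sigma=i}\RR_{[\sigma]}$ and signed identity differentials, asserts this is a projective resolution of $\RR_\Sigma$, and then identifies $R\Gamma(F)=R\Hom(\RR_\Sigma,F)\cong\Hom(P^\bullet,F)$ term-by-term with $\Cd(F)$ — a one-shot argument with no d\'evissage. You instead work with the \emph{injective} generators $\RR_{\st\sigma}$, use exactness of $\Cd$ to reduce to checking the comparison map on those, identify $\Cd(\Sigma,\RR_{\st\sigma})$ with the full cellular complex of the quotient complete fan $\Sigma_\sigma$ in $V/\Span\sigma$, and then compute that complex topologically via the CW decomposition of $\RR^{n'}$ and the induced one on $S^{n'-1}$. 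The two arguments are in the end equivalent in content: verifying that the paper's $P^\bullet$ is actually exact amounts (stalk-wise) to exactly the cellular computation you carry out, a step the paper elides as standard. What your route buys is that the topological input is made explicit and fully proved; what it costs is the d\'evissage bookkeeping and having to construct the natural transformation $R\Gamma\to\Cd$ separately before checking it on generators, whereas the paper's resolution gives the comparison map and the quasi-isomorphism simultaneously. Two minor remarks: the final paragraph overstates what is needed — surjectivity of $\partial_1\colon C_1\to C_0=\RR\cdot[\zz]$ follows because $\partial_1[\rho]=\pm[\zz]$ is nonzero for any single ray $\rho$, irrespective of whether the signs over different rays agree, so the sign-matching claim is unnecessary; and you should note that since the lemma allows $\Sigma$ non-pointed, the reduction to $\RR_{\st\sigma}$ and then to $\Sigma_\sigma$ conveniently lands you in the pointed case (as $\sigma\supseteq\zz$ and the quotient by $\Span\sigma$ is pointed), so the CW argument with the origin as unique $0$-cell applies without further fuss.
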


\begin{proof}
Consider the projective resolution $P^\bullet$ of the constant sheaf $\RR_\Sigma$:
\[ \cdots \to \oplus_{\codim \tau = 1} \RR_{[\tau]} \to \oplus_{\codim \sigma = 0} \RR_{[\sigma]} \to \RR_\Sigma,\]
where the maps $\RR_{[\tau]} \to \RR_{[\sigma]}$ in the complex are given by $\pm \Id_\RR$, with sign depending on the orientations of $\sigma$ and $\tau$, and the maps $\RR_{[\sigma]} \to \RR_\Sigma$ are similarly defined by $\pm \Id_\RR$, with sign depending on the orientations of $\sigma$ and $V$. Now
\[ R\Gamma(F) = R\Hom(\RR_\Sigma, F) \isom \Hom(P^\bullet, F),\]
and the last complex is equal to $\Cd(F)$.
\end{proof}

When $(\Sigma, \partial\Sigma)$ is a quasi-convex fan, then a similar map defines an isomorphims
\[ F(\Sigma,\partial\Sigma) \to H^0(\Cd(F)),\]
that extends to an isomorphism
\[ R\Gamma_c (F) \to \Cd(F).\]
Here $\Gamma_c$ is the functor $\Gamma_c(F) = F(\Sigma,\partial\Sigma)$. 
To get the global sections functor on a quasi-convex fan, we need to index the cellular complex over $\Int\Sigma = \Sigma\setmin\partial\Sigma$. Then the isomorphism
\[ F(\Sigma) \to H^0(\Cd(\Int\Sigma, F))\]
extends to an isomorphism $R\Gamma(\Sigma, F) \isom \Cd(\Int\Sigma, F)$. 

\subsection{The cellular complex and the push-forward map}

Let $\pi:\hat\Sigma\to\Sigma$ be a subdivision. We fix orientations on all cones in $\hat\Sigma$ and $\Sigma$. For any sheaf $F$ on $\hat\Sigma$ there is a natural morphism of complexes $\Cd(\pi_* F)\to \Cd(F)$, defined on the summand $(\pi_*F)_\sigma$  as 
\[ (f_{\sigma'})_{\pi(\sigma')\in[\sigma]} \longmapsto (\pm f_{\sigma'})_{\dim\sigma'=\dim\sigma},\]
where the sign depends on the orientations of $\sigma$ and $\sigma'$. It is easy to check that this indeed gives a morphism of complexes. Replacing $F$ by its injective resolution, the morphism extends to 
\[ \Cd(R \pi_* F) \to \Cd(F).\]
 
 \begin{lemma} \label{lem-cell}
 The morphism $\Cd(R \pi_* F) \to \Cd(F)$ is a quasi-isomorphism. If $\Sigma$ is quasi-convex, then the following diagram of quasi-isomorphisms commutes
 \[ \begin{CD} R\Gamma_c(R\pi_* F) @>>> R\Gamma_c(F) \\
@VVV @VVV\\
\Cd( R\pi_* F) @>>> \Cd(F) \\
\end{CD} \]  
 \end{lemma}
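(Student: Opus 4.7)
The strategy for part (1) is to exploit exactness of the cellular complex functor and reduce to a computation on injective generators. The functor $\Cd(\hat\Sigma,-)$ is exact on sheaves, so if $F\to I^\bullet$ is an injective resolution then $\Cd(F) \simeq \mathrm{Tot}\,\Cd(I^\bullet)$; since injective sheaves are flabby, hence $\pi_*$-acyclic, $R\pi_* F \simeq \pi_* I^\bullet$ and $\Cd(R\pi_* F) \simeq \mathrm{Tot}\,\Cd(\pi_* I^\bullet)$. A standard spectral sequence on the bounded double complexes reduces the claim to showing $\Cd(\Sigma,\pi_* I)\to\Cd(\hat\Sigma, I)$ is a quasi-isomorphism for every injective sheaf $I$. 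Since every injective on $\hat\Sigma$ is a direct sum of sheaves $\RR_{\st \sigma'}$ and both functors preserve direct sums, it suffices to treat $I = \RR_{\st \sigma'}$ for each $\sigma'\in\hat\Sigma$.

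A stalk-by-stalk calculation yields $\pi_*\RR_{\st \sigma'} = \RR_{\st \pi(\sigma')}$: the stalk at $\sigma\in\Sigma$ is $\RR_{\st \sigma'}(\pi^{-1}[\sigma])$, which is $\RR$ exactly when $\pi^{-1}[\sigma]\cap \st\sigma'$ is nonempty (and then automatically connected with minimum $\sigma'$), and this happens iff $\sigma'\subseteq\sigma$, iff $\pi(\sigma')\leq\sigma$. The problem thus reduces to showing that the canonical map
\[ \Cd(\Sigma,\RR_{\st \pi(\sigma')}) \longrightarrow \Cd(\hat\Sigma,\RR_{\st \sigma'}) \]
is a quasi-isomorphism. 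Both sides are the cellular cochain complexes of the subfans $\st \pi(\sigma')\subset\Sigma$ and $\st \sigma'\subset\hat\Sigma$; these two subfans have the same topological support in $V$, and the latter refines the former, so the map is an instance of the standard cellular subdivision comparison and is therefore a quasi-isomorphism.

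Part (2) then follows from naturality. The isomorphism $R\Gamma_c(\cG)\to\Cd(\cG)$ constructed just above is natural in $\cG$; applied to the complex $R\pi_* F$ on the quasi-convex fan $\Sigma$ it produces the left vertical arrow of the square, while the top horizontal isomorphism $R\Gamma_c(\hat\Sigma, F) = R\Gamma_c(\Sigma, R\pi_* F)$ is the standard identification available since a subdivision is proper. The square then commutes by tracing through the chain-level definitions of the cellular complex maps and the $R\Gamma_c$-identifications on both sides.

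The main obstacle is the subdivision-invariance step in part (1). The cleanest way to avoid invoking a topological black box is probably to work with the projective resolution $P^\bullet_\Sigma\to\RR_\Sigma$ appearing in the previous lemma: by the adjunction $\pi^{-1}\dashv\pi_*$ on Alexandroff spaces one can rewrite $\Cd(\Sigma,\pi_* F) = \Hom_{\hat\Sigma}(\pi^{-1} P^\bullet_\Sigma, F)$, and since $\pi^{-1}$ is computed stalk-wise, hence exact, $\pi^{-1} P^\bullet_\Sigma$ is still a resolution of $\pi^{-1}\RR_\Sigma = \RR_{\hat\Sigma}$ by the sheaves $\RR_{\pi^{-1}[\sigma]}$. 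The cellular comparison corresponds under this adjunction to a chain map $P^\bullet_{\hat\Sigma}\to\pi^{-1} P^\bullet_\Sigma$ between two resolutions of the same sheaf; applying $\Hom(-,I^\bullet)$ for an injective resolution $I^\bullet$ of $F$ turns this quasi-isomorphism of resolutions into the desired quasi-isomorphism of Hom-complexes, and the commutative square of part (2) is automatic in this formulation.
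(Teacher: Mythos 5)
Your first two paragraphs essentially reproduce the paper's argument for part (1): reduce to an injective generator $F=\RR_{\st\tau}$ on $\hat\Sigma$, observe that the comparison becomes a map of cellular chain complexes of two fans with the same support, and invoke topological subdivision-invariance of cellular homology. Your explicit verification that $\pi_*\RR_{\st\sigma'}=\RR_{\st\pi(\sigma')}$ is a clean and correct detail; the paper gets the same effect by replacing $\hat\Sigma$ with $\hat\Sigma_\tau$ and $\Sigma$ with $\Sigma_{\pi(\tau)}$, then working with the constant sheaves $\RR_{\hat\Sigma}$ and $\RR_\Sigma$. Up to that point the two arguments agree.

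Your treatment of part (2) is the real weak spot. Saying the square ``commutes by naturality'' does not close it, because the two horizontal arrows are defined by different recipes: the top is the canonical identification $R\Gamma_c(\Sigma,R\pi_*F)\isom R\Gamma_c(\hat\Sigma,F)$, the bottom is the cellular comparison, and naturality of the vertical map $R\Gamma_c\to\Cd$ relates only two applications of that natural transformation, not two a priori unrelated horizontal maps. What the paper actually does is reduce again to $F=\RR_{\hat\Sigma}$, at which point all four objects are either acyclic (when $\partial\Sigma\neq\emptyset$) or naturally quasi-isomorphic to $\RR$ with every arrow the identity, so the square trivially commutes in the derived category. Some such concrete check (or an explicit chain-level verification that the sign conventions match) is needed, and you should not leave it as ``tracing through the definitions.''

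Your proposed alternative in the final paragraph, replacing the topological input by a comparison of resolutions $P^\bullet_{\hat\Sigma}\to\pi^{-1}P^\bullet_\Sigma$, has a genuine gap. The assertion that ``$\pi^{-1}P^\bullet_\Sigma$ is still a resolution of $\RR_{\hat\Sigma}$'' relies on $P^\bullet_\Sigma\to\RR_\Sigma$ being a resolution, which holds only when $\Sigma$ is complete. For a quasi-convex fan the complex $P^\bullet_\Sigma$ resolves $\RR_\Sigma/\RR_{\partial\Sigma}$ rather than $\RR_\Sigma$, and for the fans one encounters after the reduction in part (1) (stars $\st\tau$, $\st\pi(\tau)$, which are generally not complete or even quasi-convex), $P^\bullet_\Sigma$ need not be a resolution of anything convenient. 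One can try to salvage the argument by checking directly that $P^\bullet_{\hat\Sigma}\to\pi^{-1}P^\bullet_\Sigma$ is a quasi-isomorphism of complexes of sheaves stalk by stalk, but that stalkwise computation is precisely the topological subdivision-invariance statement you were hoping to avoid, so the black box has just been relocated rather than removed.
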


\begin{proof}
It suffices to prove the statement of the lemma for injective sheaves $F$ of the form $F=\RR_{\st\tau}$. We may replace $\hat\Sigma$ with $\st\tau$, $\Sigma$ with $\st\pi(\tau)$ (or, more precisely, replace $\hat\Sigma$ with $\hat\Sigma_\tau$ and $\Sigma$ with $\Sigma_{\pi(\tau)}$ as at the end of Section~\ref{sec-npfans}), and thus assume that $F=\RR_{\hat\Sigma}$, $\pi_* F = \RR_\Sigma$. 

The quasi-isomorphism of 
\[ \Cd(\RR_\Sigma) \to \Cd(\RR_{\hat\Sigma})\]
follows from results in algebraic topology. Let $\Delta$ be the intersection of $|\Sigma|$ with an $(n-1)$-sphere in $V$ centered at the origin. Then $\Sigma$ and $\hat\Sigma$ give $\Delta$ two structures of a CW-complex. The cellular complexes $\Cd(\RR_\Sigma)$ and $\Cd(\RR_{\hat\Sigma})$ compute the reduced cellular homology of $\Delta$, and the map is the map coming from refinement. Since the cellular homology only depends on the topological space, the map is a quasi-isomorphism.  

To prove the commutativity of the diagram, again we may assume that $F=\RR_{\hat\Sigma}$, $\pi_* F = \RR_\Sigma$. All complexes in the diagram are quasi-isomorphic to zero unless $\partial\Sigma=\emptyset$, that means, $\Sigma$ and $\hat\Sigma$ are complete fans. In that case each complex is naturally quasi-isomorphic to $\RR$ and the maps between them are the identity maps $\Id_\RR$.
\end{proof}

\subsection{Categories of modules}

Let $A$-mod be the category of graded finitely generated $A$-modules, and  
$A_\Sigma$-mod the category of sheaves of graded $A$-modules on $\Sigma$ of finite type. We write $\Hom(F,G)$ for graded morphisms and 
\[ \Homd(F,G) = \oplus_{j\in\ZZ} \Hom (F,G(j)).\]
If $F$ and $G$ are (sheaves of) $A$-modules, then $\Homd(F,G)$ is an $A$-module.

The category $A_\Sigma$-mod has enough projectives. Let $A_{[\sigma]}$ be the constant sheaf with stalk $A$ on $[\sigma]$, extended by zero to $\Sigma$. Such sheaves $A_{[\sigma]}$ for $\sigma\in\Sigma$ and their shifts are projective. Any sheaf in $A_\Sigma$-mod is quasi-isomorphic to a finite length complex of such projectives.

Let $A_{\st\sigma}$ be the constant sheaf with stalk $A$ on $\st\sigma$, extended by zero to $\Sigma$. Such sheaves and their shifts are injective as sheaves of vector spaces. Note that 
\[ \Hom_{A_\Sigma\text{-mod}} (F, A_{\st\sigma}) = \Hom_{A\text{-mod}}(F_\sigma, A).\]
Hence the sheaf $A_{\st\sigma}$ is injective in the full subcategory of $A_\Sigma$-mod consisting of sheaves with all stalks free $A$-modules. Every sheaf in this subcategory is quasi-isomorphic to a finite length complex of such injective sheaves. Combining the projective and injective resolutions, we see that every sheaf in $A_\Sigma$-mod is quasi-isomorphic to a finite length complex of injective sheaves $A_{\st\sigma}$ and their shifts.

We use the injective and projective resolutions to compute derived functors. As an example, when $F,G$ are in $A_\Sigma$-mod, then we have quasi-isomorphisms of complexes of $A$-modules.
\[ R\Homd (F,G) \isom \Homd(P,G) \isom \Homd(F',I),\]
where $P$ is a projective resolution of $F$, $I$ is an injective resolution of $G$, and $F'$ is a resolution of $F$ in sheaves with all stalks free $A$-modules.

Let $D^b(A_\Sigma\text{-mod})$ be the bounded derived category of $A_\Sigma$-mod.
We consider the category $\fM(\Sigma)$ as a full subcategory of the category $A_\Sigma$-mod and of the derived category $D^b(A_\Sigma\text{-mod})$.

The cellular complex defines a functor from $A_\Sigma$-mod to complexes in $A$-mod. This functor is exact and extends to a functor between derived categories
\[ \Cd: D^b(A_\Sigma\text{-mod}) \to D^b(A\text{-mod}).\]

When $\pi$ is a subdivision, then the quasi-isomorphism of cellular complexes $\Cd(R \pi_* F) \to \Cd(F)$ extends to an isomorphism of functors $\Cd(R \pi_* \cdot) \to \Cd(\cdot)$. Here the two functors can be viewed as functors $D^b(A_\Sigma\text{-mod}) \to D^b(A\text{-mod})$.

 \subsection{The functor $\tilde{\DD}$.}
 
 We will now define the duality functor $\tilde{\DD}$ for not necessarily pointed fans. This differs from the functor $\DD$ in \cite{BL1} by a shift in grading.
 
 Let $A_{\{\zz\}}$ be the sheaf on $\Sigma$ with stalk $A$ at $\zz$ and zero at other cones. Define the dualizing complex $\tilde{D}_\Sigma = A_{\{\zz\}}[\codim\zz]$ in $D^b(A_\Sigma\text{-mod})$. This is a complex of sheaves supported on $\zz$ and concentrated in cohomology degree $-\codim\zz$. The contravariant duality functor $\tilde\DD = \tilde\DD_\Sigma: D^b(A_\Sigma\text{-mod})\to D^b(A_\Sigma\text{-mod})$  is defined as the sheaf hom into the dualizing complex:
 \[ \tilde\DD(F) = R\cHomd(F, \tilde{D}_\Sigma).\]
In \cite{BL1} the duality functor $\DD$ was defined similarly, but with $\tilde{D}_\Sigma$ replaced by $D_\Sigma = \tilde{D}_\Sigma \otimes \det V^*$. It follows that 
\[ \DD(F) = \tilde{\DD}(F) \otimes \det V^* \isom \tilde{\DD}(F) (-2\codim\zz).\]

Let us give an actual computation of the dual sheaf. Consider the injective resolution of the dualizing complex $\tilde{D}_\Sigma$,
\[ I = (0\to A_{\st\zz} \to \bigoplus_{\dim\rho=\dim\zz+1} A_{\st\rho} \to \bigoplus_{\dim\tau=\dim\zz+2} A_{\st\tau} \to \cdots),\]
where the maps between $A_{\st\rho}$ and $A_{\st\tau}$ are defined by $\pm \Id_A$, with the sign depending on the orientations of $\rho$ and $\tau$. The term $A_{\st\zz}$ lies in cohomology degree $-\codim\zz$. If $F$ is a sheaf in $A$-mod with all stalks free $A$-modules, then $\tilde\DD(F)$ is the complex of sheaves $\cHomd(F,I)$. When $F$ is an arbitrary complex in $D^b(A_\Sigma\text{-mod})$, we first replace it with a complex where all sheaves have stalks free $A$-modules, and then apply $\cHomd( \cdot,I)$ to this complex.

\begin{proposition} \label{prop-dual}
For $F$ in $D^b(A_\Sigma\text{-mod})$ there is a natural isomorphism in $D^b(A\text{-mod})$
\[ \psi_\Sigma: R\Gamma(\tilde\DD(F)) \stackrel{\isom}{\longrightarrow} R\Homd_A(\Cd(F), A).\]
\end{proposition}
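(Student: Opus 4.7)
The plan is to use the adjunction $R\Gamma \circ R\cHomd = R\Homd_{A_\Sigma}$ combined with the explicit injective resolution $I$ of $\tilde{D}_\Sigma$ given just before the proposition, and then match the resulting complex against the dualized cellular complex term by term.

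First I apply the adjunction, valid in $D^b(A_\Sigma\text{-mod})$, to rewrite
\[ R\Gamma(\tilde\DD(F)) \isom R\Homd_{A_\Sigma}(F, \tilde{D}_\Sigma). \]
To compute the right-hand side I replace $F$ by a quasi-isomorphic complex $F'$ whose stalks are free $A$-modules, as discussed in the subsection on categories of modules. Since each $A_{\st\sigma}$ is injective in the full subcategory of sheaves with free stalks, the injective resolution $I$ lets us identify
\[ R\Homd_{A_\Sigma}(F, \tilde{D}_\Sigma) \isom \Homd_{A_\Sigma}(F', I). \]

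Next, the identity $\Homd_{A_\Sigma}(F', A_{\st\sigma}) = \Homd_A(F'_\sigma, A)$ recalled in the text turns $\Homd_{A_\Sigma}(F', I)$ into an explicit complex of $A$-modules: the term $A_{\st\sigma}$ sits in cohomological degree $-\codim\sigma$ of $I$, so the degree $-i$ piece of $\Homd_{A_\Sigma}(F', I)$ is $\bigoplus_{\codim\sigma = i} \Homd_A(F'_\sigma, A)$, with differentials induced from the maps $A_{\st\rho} \to A_{\st\sigma}$ of $I$ (which are $\pm\Id_A$ stalkwise). On the other side, since $F'$ has free stalks the cellular complex $\Cd(F')$ is a complex of free $A$-modules, so $R\Homd_A(\Cd(F'), A) = \Homd_A(\Cd(F'), A)$; its degree $-i$ term is also $\bigoplus_{\codim\sigma=i}\Homd_A(F'_\sigma, A)$, and its differential is $\phi_\tau \mapsto \pm\phi_\tau \circ \res^\sigma_\tau$ for $\tau$ a facet of $\sigma$. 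A direct comparison of the two complexes then yields the desired isomorphism $\psi_\Sigma$, which is natural in $F$ because every step (resolution of $F$, adjunction, the identification $\Homd_{A_\Sigma}(-, A_{\st\sigma}) = \Homd_A((-)_\sigma, A)$) is functorial in the derived category.

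The main obstacle I anticipate is sign bookkeeping: the signs appearing in the dualized cellular differential come from the fixed orientations on cones, while those appearing in $I$ come from the same orientation data used in its construction. I will need to arrange the sign conventions so that the term-by-term matching gives a genuine equality of differentials rather than one up to a twist. Once the conventions are aligned, the proof is essentially formal, and naturality follows automatically from the naturality of the adjunction and of the injective resolution construction.
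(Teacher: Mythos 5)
Your proposal is correct and follows essentially the same route as the paper: reduce to the case where $F$ has free stalks, use the injective resolution $I$ of $\tilde{D}_\Sigma$ to compute $R\Gamma(\tilde\DD(F)) = \Homd(F,I)$, and then identify this term-by-term with $\Homd_A(\Cd(F),A)$ via the stalk-level identity $\Homd_{A_\Sigma}(F, A_{\st\sigma}) = \Homd_A(F_\sigma, A)$. Regarding the sign issue you flag at the end: it is a non-issue by design, since the differentials of $I$ and of $\Cd$ are both built from the \emph{same} fixed orientations on cones with the \emph{same} $\pm\Id$ convention, so the two complexes of $A$-modules coincide on the nose rather than merely up to a sign twist.
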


\begin{proof}

It suffices to construct the isomorphism $\psi_\Sigma$ canonically for sheaves $F$ with all stalks free $A$-modules. For such $F$ we have that
\[ R\Gamma(\tilde\DD(F)) = R\Homd (F, \tilde{D}_\Sigma) = \Homd (F, I),\]
which is equal to the complex
\[ 0\to F_\zz^* \to \bigoplus_{\dim\rho=\dim\zz+1} F_\rho^* \to \bigoplus_{\dim\tau=\dim\zz+2} F_\tau^* \to \cdots ,\]
where for a free graded $A$-module $M$ we denote $M^* = \Homd_A(M,A)$. The term $F_\zz^*$ of the complex lies in cohomology degree $-\codim\zz$. This complex is equal to $\Cd(F)^*$.
\end{proof}

When $\Sigma$ is a complete fan, then a choice of orientation on $V$ defines an isomorphism $R\Gamma(F) \isom \Cd(F)$, hence the proposition gives a nondegenerate $A$-bilinear pairing
\[ R\Gamma(\tilde{\DD}(F)) \times R\Gamma(F) \to A.\]
Similarly, for quasiconvex $\Sigma$ we get the pairing
\[  R\Gamma(\tilde{\DD}(F)) \times R\Gamma_c (F) \to A.\]

\subsection{Compatibility with $\pi^*$.}

Consider the fan $\Sigma\times \RR^m$ in $V\times \RR^m$ and the projection morphism $\pi: \Sigma\times \RR^m \to \Sigma$. Let $A$ be the ring of polynomial functions on $V$ and $B$ the ring of polynomial functions on $V\times\RR^n$. Fix an orientation on $\RR^m$. Then an orientation on $\sigma$ gives an orientation on $\sigma\times\RR^m$.

Define the functor $\pi^*: A_\Sigma\text{-mod} \to B_{\Sigma\times\RR^m}\text{-mod}$ by
\[ \pi^*(F) = F \otimes_A B.\]
The functor $\pi^*$ is exact and extends to a functor on derived categories. Also because of exactness of the operation $\cdot \otimes_A B$, there are natural isomorphisms
\begin{align*}
  R\Gamma(\pi^* F) &\isom R\Gamma(F) \otimes_A B,\\
  \Cd (\pi^* F) &\isom \Cd(F) \otimes_A B. 
\end{align*}

\begin{lemma} \label{lem-compat1}
\begin{enumerate}
  \item The functor $\pi^*: D^b(A_\Sigma\text{-mod})\to D^b(B_{\Sigma\times\RR^m}\text{-mod})$ commutes with $\tilde \DD$:
  \[ \tilde{\DD} \circ \pi^* = \pi^* \circ\tilde\DD.\]
  
  \item The isomorphism $\psi_\Sigma$ in Proposition~\ref{prop-dual} is compatible with $\pi^*$ in the sense that the following diagram commutes
\[ \begin{CD} R\Gamma(\tilde\DD(\pi^* F)) @>{\psi_{\Sigma\times\RR^m}}>> R\Homd_B(\Cd(\pi^* F), B) \\
@V{\isom}VV @V{\isom}VV\\
R\Gamma(\tilde\DD(F))  \otimes_A B @>{\psi_\Sigma\otimes \Id}>> R\Homd_A(\Cd(F), A) \otimes_A B\\
\end{CD} \]  
\end{enumerate}
\end{lemma}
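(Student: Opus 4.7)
The plan is to reduce both statements to explicit identifications on representatives in the derived category and then read off commutativity term by term. The core observation is that $\pi^*$ transports the dualizing data of $\Sigma$ to that of $\Sigma\times\RR^m$: the minimal cone of $\Sigma\times\RR^m$ is $\zz\times\RR^m$ with $\codim(\zz\times\RR^m)=\codim\zz$, and $A_{\{\zz\}}\otimes_A B = B_{\{\zz\times\RR^m\}}$, so $\pi^*\tilde D_\Sigma = \tilde D_{\Sigma\times\RR^m}$.

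For part (1), I would replace $F$ by a representative with free stalks, as allowed by the discussion preceding Proposition~\ref{prop-dual}. The explicit injective resolution $I$ of $\tilde D_\Sigma$ given there pulls back to the analogous injective resolution $\pi^*I$ of $\tilde D_{\Sigma\times\RR^m}$: we have $\pi^*A_{\st\sigma} = B_{\st(\sigma\times\RR^m)}$, and with the product orientation on $\sigma\times\RR^m$ coming from the fixed orientation on $\RR^m$ the differential signs transport correctly. Then
\[ \tilde\DD(\pi^*F) = \cHomd_B(\pi^*F,\pi^*I) \cong \pi^*\cHomd_A(F,I) = \pi^*\tilde\DD(F), \]
where the middle isomorphism is the natural map
\[ \Homd_A(M,A)\otimes_A B \;\longrightarrow\; \Homd_B(M\otimes_A B,\,B), \]
an isomorphism for every finitely generated free graded $A$-module $M$, applied stalkwise to each term $\cHomd(F_\sigma, A_{\st\sigma})$.

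For part (2), the proof of Proposition~\ref{prop-dual} identifies $R\Gamma(\tilde\DD(F))$ with $\Cd(F)^*$ (where ${}^* = \Homd_A(-,A)$) by matching the contribution of each stalk $F_\sigma$ to $\cHomd(F,I)$ with the summand $F_\sigma^*$ of $\Cd(F)^*$ in cellular degree $\codim\sigma$. Running the same identification over $B$ for $\pi^*F$, both paths in the square land in
\[ \Cd(\pi^*F)^*_B = (\Cd(F)\otimes_A B)^*_B \]
by the same term-by-term formula. Commutativity of the square thus reduces to naturality of the comparison map $\Homd_A(-,A)\otimes_A B \to \Homd_B(-\otimes_A B,\,B)$ applied to each summand $F_\sigma$, which is tautological.

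The main obstacle will be sign bookkeeping. The cellular differentials and the differentials of the injective resolution $I$ both carry $\pm$ signs determined by orientations of cones, and one must check that the product orientation on $\sigma\times\RR^m$, together with the fixed orientation on $\RR^m$, makes the differentials of $\pi^*I$ and $\Cd(\pi^*F)$ match those of $I\otimes_A B$ and $\Cd(F)\otimes_A B$ on the nose. Once this compatibility is laid out, parts (1) and (2) both follow from the tensor-hom identity for $A\to B$ applied one stalk at a time.
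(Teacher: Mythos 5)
Your proposal is correct and follows essentially the same route as the paper: reduce to sheaves with free stalks, observe that $\tilde D_{\Sigma\times\RR^m}=\pi^*\tilde D_\Sigma$ (with its injective resolution pulling back), and then apply the tensor-hom base-change isomorphism stalkwise, which simultaneously handles both parts via the explicit identification $R\Gamma(\tilde\DD(F))=\Homd(F,I)=\Cd(F)^*$ from the proof of Proposition~\ref{prop-dual}. The only cosmetic difference is that the paper phrases part (1) via the abstract equality $\Homd_B(M\otimes_A B,N\otimes_A B)=\Homd_A(M,N)\otimes_A B$ rather than explicitly exhibiting $\pi^*I$, and it tacitly relies on the already-fixed convention of product orientations for the sign match you flag.
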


\begin{proof}
If $M$ and $N$ are graded $A$\text{-mod}ules, then 
\[ \Homd_B(M\otimes_A B, N\otimes_A B) = \Homd_A(M,N)\otimes_A B.\]
Note that $\tilde{D}_{\Sigma\times\RR^m} = \tilde{D}_\Sigma \otimes_A B$. This implies that for any $F$ in $D^b(A_\Sigma\text{-mod})$,
\[ R\cHomd (F\otimes_A B, \tilde{D}_\Sigma \otimes_A B) = R\cHomd (F, \tilde{D}_\Sigma)\otimes_A B.\]

To prove the second part of the lemma we need the construction of the isomorphism $\psi_\Sigma$ from the proof of Proposition~\ref{prop-dual}. Recall that if $F$ is in $A_\Sigma\text{-mod}$ with all stalks free $A$-modules, then 
\[ R\Gamma(\tilde\DD(F)) = \Homd (F, I) = \Cd(F)^*,\]
where $I$ is the injective resolution of $\tilde{D}_\Sigma$. Applying the operation $\otimes_A B$ to all terms of the equation gives the construction of $\psi_{\Sigma\times\RR^m}$  for $\pi^* F$.

For general $F$ in $D^b(A_\Sigma\text{-mod})$ we first replace it with a complex of sheaves with all stalks free $A$-modules (for example, a projective resolution), and then apply the previous argument to each sheaf in the complex.
\end{proof}

The lemma implies that when $\Sigma$ is quasi-convex, then the pairing 
\[  R\Gamma(\tilde{\DD}(\pi^* F)) \times R\Gamma_c (\pi^* F) \to B\]
constructed on $\Sigma\times\RR^m$ is obtained from the pairing 
\[  R\Gamma(\tilde{\DD}(F)) \times R\Gamma_c (F) \to A\]
on $\Sigma$ by applying the operation $\otimes_A B$ to all terms.

\subsection{Duality in $\fM(\Sigma)$}

 As was explained in Section~\ref{sec-npfans}, $\pi^*$ restricts to an essentially surjective functor $\fM(\Sigma)\to \fM(\Sigma\times\RR^m)$.

\begin{lemma}
For any $\Sigma$ the duality functor $\tilde\DD$ maps $\fM(\Sigma)$ to $\fM(\Sigma)$. Moreover, $\tilde\DD \circ \tilde{\DD}$ is naturally isomorphic to the identity functor. 
\end{lemma}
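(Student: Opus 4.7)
The plan is to reduce both assertions to the pointed-fan case handled in \cite{BL1}, using the projection from a non-pointed fan onto its pointed quotient. Write $\Sigma = \overline\Sigma \times \RR^m$ with $\overline\Sigma$ pointed and let $\pi: \Sigma \to \overline\Sigma$ denote the projection. By the discussion in Section~\ref{sec-npfans}, the pullback $\pi^*: \fM(\overline\Sigma) \to \fM(\Sigma)$ is exact and essentially surjective, so every $\cF \in \fM(\Sigma)$ is isomorphic to $\pi^*\cG$ for some $\cG \in \fM(\overline\Sigma)$.

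For the first assertion, combine Lemma~\ref{lem-compat1}(1) with the pointed case. The lemma gives $\tilde\DD(\cF) \isom \tilde\DD(\pi^*\cG) \isom \pi^*\tilde\DD(\cG)$, and since $\tilde\DD$ and $\DD$ differ only by tensoring with the invertible graded rank-one module $\det V^*$ (a grading shift), while $\DD$ preserves $\fM(\overline\Sigma)$ by \cite{BL1}, we conclude that $\tilde\DD(\cG) \in \fM(\overline\Sigma)$ and hence $\tilde\DD(\cF) \in \fM(\Sigma)$.

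For the biduality, the contravariance of $\tilde\DD$ together with the relation $\DD(F) = \tilde\DD(F) \otimes \det V^*$ and the standard formula $R\cHomd(M \otimes L, N) \isom R\cHomd(M, N) \otimes L^{-1}$ for an invertible graded module $L$ gives a canonical natural identification $\DD\DD \isom \tilde\DD\tilde\DD$, in which the two factors of $\det V^*$ cancel. Hence the biduality $\DD\DD \isom \Id$ from \cite{BL1} transfers directly into a natural isomorphism $\tilde\DD\tilde\DD \isom \Id$ on $\fM(\overline\Sigma)$. To transport this to $\Sigma$, observe that the standard evaluation transformation $\eta: \Id \to \tilde\DD\tilde\DD$ commutes with $\pi^*$ by a second application of Lemma~\ref{lem-compat1}, so $\pi^*(\eta_\cG) = \eta_{\pi^*\cG}$, and exactness of $\pi^*$ then forces $\eta_\cF$ to be an isomorphism for every $\cF \in \fM(\Sigma)$. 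The main obstacle is purely bookkeeping: confirming that each identification—the commutation $\tilde\DD \circ \pi^* \isom \pi^* \circ \tilde\DD$, the cancellation of the two $\det V^*$ factors, and the descent of $\eta$ along $\pi^*$—is a natural transformation of functors rather than an object-wise isomorphism. This comes down to verifying that $\eta$ arises from the standard derived-categorical adjunction for $R\cHomd(\cdot, \tilde D_\Sigma)$ and that Lemma~\ref{lem-compat1} is itself a natural isomorphism of functors, both of which follow routinely from the exactness of $\cdot \otimes_A B$.
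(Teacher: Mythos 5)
Your proof is correct and follows essentially the same route as the paper: write $\Sigma = \overline\Sigma \times \RR^m$, use that $\tilde\DD$ is a shift of $\DD$ to invoke the pointed-fan results of Bressler--Lunts, and transport both statements to $\Sigma$ via Lemma~\ref{lem-compat1}(1). The paper is terser (it does not spell out the cancellation of the two $\det V^*$ factors in $\DD\DD \isom \tilde\DD\tilde\DD$), but the argument is the same.
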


\begin{proof}
Write $\Sigma = \overline{\Sigma}\times\RR^m$ for some pointed fan $\overline{\Sigma}$. Let $\pi: \Sigma\to\overline{\Sigma}$ be the projection. Since the functor $\tilde{\DD}$ is a shift of the functor $\DD$, we know the lemma for the pointed fan $\overline{\Sigma}$ from \cite{BL1}. Now part (1) of the previous lemma proves it for $\Sigma$.
\end{proof}

By the lemma, the functor $\tilde\DD$ maps indecomposable sheaves in $\fM$ to indecomposable sheaves. Since the functor is local, it maps $\cL^\tau$ to $\cL^\tau(j)$
for some $j$. More precisely:

\begin{lemma} \label{lem-dualL}
\[ \tilde{\DD} (\cL^\tau) = \big( \det \tau^\perp\big)^* \otimes \cL^\tau \isom \cL^\tau(2\codim\tau).\]
\end{lemma}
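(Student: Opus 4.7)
The plan is to reduce the statement to a shift-counting problem: first show $\tilde\DD(\cL^\tau)\isom\cL^\tau(j)$ for some integer $j$, then identify $j=2\codim\tau$ by reduction to a pointed fan and to the case of the minimal cone, where the answer is known from Bressler--Lunts.

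For the first step, the preceding lemma gives $\tilde\DD\circ\tilde\DD\isom\Id$, so $\tilde\DD$ is a self-equivalence of $\fM(\Sigma)$ and sends $\cL^\tau$ to an indecomposable object of $\fM(\Sigma)$. I would then verify that $\tilde\DD(\cL^\tau)$ is supported on $\st\tau$ with nonzero stalk at $\tau$: using an injective resolution $I^\bullet$ of $\tilde D_\Sigma$ by sheaves $A_{\st\rho}$, one sees that $\cHomd(\cL^\tau,A_{\st\rho})_\sigma=\Homd_A(\cL^\tau_\sigma,(A_{\st\rho})_\sigma)=0$ whenever $\sigma\notin\st\tau$ because $\cL^\tau_\sigma=0$ there; at $\sigma=\tau$, a short Koszul-style computation identifies the surviving contribution to $\tilde\DD(\cL^\tau)_\tau$ with a shifted copy of $R\Homd_A(\cA_\tau,A)$, a nonzero free $\cA_\tau$-module. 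Since the only indecomposable in $\fM(\Sigma)$ supported on $\st\tau$ with nonzero stalk at $\tau$ is $\cL^\tau$ itself, this forces $\tilde\DD(\cL^\tau)\isom\cL^\tau(j)$.

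Next I would reduce to the pointed case via Lemma~\ref{lem-compat1}(1). Writing $\Sigma=\overline\Sigma\times\RR^m$ with $\overline\Sigma$ pointed and $\tau=\overline\tau\times\RR^m$, we have $\cL^\tau=\pi^*\cL^{\overline\tau}$ and $\tilde\DD_\Sigma(\cL^\tau)=\pi^*\tilde\DD_{\overline\Sigma}(\cL^{\overline\tau})$; since $\codim_\Sigma\tau=\codim_{\overline V}\overline\tau$ and $\pi^*$ respects grading shifts, it suffices to handle the pointed case. There I would further reduce to the intersection sheaf case through the isomorphism of ringed spaces $\st_{\overline\Sigma}\overline\tau\isom\overline\Sigma_{\overline\tau}$, which carries $\cL^{\overline\tau}$ to the intersection cohomology sheaf $\cL_{\overline\Sigma_{\overline\tau}}$ of the (possibly non-pointed) fan $\overline\Sigma_{\overline\tau}$; its minimal cone is $\Span\overline\tau$, of codimension $\codim\overline\tau$. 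Applying the pointed-reduction once more, the question becomes the Bressler--Lunts statement $\DD(\cL_{\Sigma'})\isom\cL_{\Sigma'}$ on a pointed fan $\Sigma'$. Combining this with $\tilde\DD(F)=\DD(F)\otimes(\det V^*)^{-1}$ and unwinding the identifications gives $\tilde\DD(\cL^\tau)\isom(\det\tau^\perp)^*\otimes\cL^\tau$, which as a graded sheaf is $\cL^\tau(2\codim\tau)$, since $\det\tau^\perp$ is $1$-dimensional and concentrated in degree $2\codim\tau$.

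The main obstacle is the bookkeeping of the shift through this chain of reductions: the change between $\DD$ and $\tilde\DD$ introduces a factor of $\det V^*$, the auxiliary fan $\overline\Sigma_{\overline\tau}$ is non-pointed so one must apply the pointed reduction a second time, and the replacement of $V^*$ by $\tau^\perp$ needs to emerge naturally. As a sanity check, one can alternatively pin down $j$ from the first step's direct calculation of $\tilde\DD(\cL^\tau)_\tau$: the Koszul resolution of $\cA_\tau$ gives $R\Homd_A(\cA_\tau,A)$ concentrated in cohomological degree $\codim\tau$ and shifted in grading by $2\codim\tau$, matching the predicted shift.
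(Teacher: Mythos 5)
Your ``sanity check'' at the end is in fact the paper's proof, and it is the cleanest route: once one knows $\tilde\DD(\cL^\tau)\isom\cL^\tau(j)$ for some $j$ (which both you and the paper deduce from $\tilde\DD\circ\tilde\DD\isom\Id$ plus a support computation), one pins down $j$ by restricting to the open subfan $[\tau]$, where $\cL^\tau$ becomes a skyscraper with stalk $\cA_\tau$, applying Proposition~\ref{prop-dual} to get $\tilde\DD(\cL^\tau)_\tau = R^0\Homd(\Cd(\cL^\tau),A) = \Ext_A^{\codim\tau}(\cA_\tau,A)$, and reading off the grading shift $2\codim\tau$ from the Koszul resolution of $\cA_\tau$ over $A$. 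You have exactly this computation available; promote it from afterthought to the main argument.

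The longer chain of reductions that you present as your primary route has a genuine gap. You pass from $\cL^{\overline\tau}$ on $\overline\Sigma$ to $\cL_{\overline\Sigma_{\overline\tau}}$ on the non-pointed fan $\overline\Sigma_{\overline\tau}$ via the isomorphism of ringed spaces $\st_{\overline\Sigma}\overline\tau\isom\overline\Sigma_{\overline\tau}$, and then invoke duality on $\overline\Sigma_{\overline\tau}$. But $\st\overline\tau$ is a closed subset, not an open one, and the duality functors on $\overline\Sigma$ and on $\overline\Sigma_{\overline\tau}$ are computed against \emph{different} dualizing complexes: $\tilde D_{\overline\Sigma}$ sits at the zero cone in cohomological degree $-\dim V$, while $\tilde D_{\overline\Sigma_{\overline\tau}}$ sits at $\Span\overline\tau$ in degree $-\codim\overline\tau$. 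Equivalently, the injective resolution on $\overline\Sigma$ involves $A_{\st\rho}$ for all $\rho\in\overline\Sigma$, including many $\rho\not\geq\overline\tau$ that still contribute to stalks of $\tilde\DD_{\overline\Sigma}(\cL^{\overline\tau})$ on $\st\overline\tau$, whereas the resolution on $\overline\Sigma_{\overline\tau}$ only sees cones corresponding to $\rho\geq\overline\tau$. The compatibility $\tilde\DD_{\overline\Sigma}(\cL^{\overline\tau})|_{\st\overline\tau}\isom\tilde\DD_{\overline\Sigma_{\overline\tau}}(\cL_{\overline\Sigma_{\overline\tau}})$ is therefore not an instance of the locality of $\tilde\DD$ (which is locality with respect to \emph{open} subfans $[\sigma]$), and none of Lemma~\ref{lem-compat1}, Theorem~\ref{thm-compat2}, or the $\pi^*$-compatibility covers it. Without a separate lemma establishing this ``star-reduction'' compatibility, the reduction to the Bressler--Lunts fact $\DD(\cL_{\Sigma'})\isom\cL_{\Sigma'}$ does not go through. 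The direct stalk-at-$\tau$ computation sidesteps all of this.
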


\begin{proof}
We need to compute the stalk of $\tilde{\DD} (\cL^\tau)$ at $\tau$. Since the functor $\tilde\DD$ is local, we may replace $\Sigma$ with $[\tau]$. Then $\cL^\tau$ is the sheaf with stalk $\cA_\tau$ at $\tau$ and zero everywhere else. Moreover,
\[ \tilde{\DD} (\cL^\tau)_\tau = \Gamma(\tilde{\DD} (\cL^\tau)) = R^0\Homd ( \Cd(\cL^\tau), A)\]
by Proposition~\ref{prop-dual}. The last space is $\Ext^{\codim\tau}_A(\cA_\tau,A)$. Let us use the Koszul resolution of $\cA_\tau$:
\[ 0\to \det \tau^\perp \otimes A \to \cdots \to \Lambda^2 \tau^\perp \otimes A \to \Lambda^1 \tau^\perp \otimes A \to A \to \cA_\tau.\]
From this we get that the Ext group is $\big( \det \tau^\perp\big)^* \otimes \cA_\tau$. 
\end{proof}

Let us fix a volume form $\Omega_\tau\in \det \tau^\perp$, giving an isomorphism of $\cA_\Sigma$-modules $\phi: \cL^\tau \to \DD(\cL^\tau)(-2\codim\tau)$. Then $\phi$ is symmetric in the sense that $\tilde{\DD}(\phi) = \phi(2\codim\tau)$. 

Let $\Sigma$ be quasi-convex. Note that the volume form $\Omega_\tau$ determines an orientation on $V/\Span(\tau)$. Together with the fixed orientation on $\tau$ they determine an orientation on $V$ (in the order orientation of $\tau$ times orientation on $V/\Span\tau$) and hence an isomorphism $H^0(\Cd(\cL^\tau)) \isom \cL^\tau(\Sigma,\partial\Sigma)$. Proposition~\ref{prop-dual} and Lemma~\ref{lem-dualL} then give a nondegenerate $A$-bilinear pairing
\[ \cL^\tau(\Sigma) \times \cL^\tau(\Sigma,\partial\Sigma) \to A(-2\codim\tau)\]
and the $\RR$-linear \Po duality pairing
\[ IH(\st\tau) \times IH(\st\tau, \partial\st\tau) \to \RR(-2\codim\tau).\]
These pairings are $\cA(\Sigma)$-bilinear in the sense that for $f\in \cA(\Sigma)$,
\[ \langle f\cdot,\cdot\rangle = \langle\cdot,f\cdot\rangle.\]
Moreover, the \Po pairing is symmetric when restricted to 
\[ IH(\st\tau,\partial\st\tau) \times IH(\st\tau, \partial\st\tau) \to \RR(-2\codim\tau),\]
and similarly for $\cL^\tau(\Sigma,\partial\Sigma)$.

Define the fan $\overline{\st\tau}$ as the image of $\st\tau$ in $V/\Span\tau$. This is a quasi-convex pointed fan with boundary $\partial \overline{\st\tau}$. Then $\pi^*$ induces isomorphisms
\begin{align*}
  IH(\overline{\st\tau}) &\isom IH(\st\tau),\\
  IH(\overline{\st\tau},\partial\overline{\st\tau} ) &\isom IH(\st\tau,\partial\st\tau).
\end{align*}
Lemma~\ref{lem-compat1} implies that the pairing on $\st\tau$ is the same as the pairing on $\overline{\st\tau}$. The latter pairing is the same as the pairing of Bressler and Lunts \cite{BL1, BL2}.

\subsection{Compatibility with subdivisions}

Let $\pi:\hat\Sigma\to\Sigma$ be a subdivision. Recall that in Section~\ref{sec-cell} we constructed for any sheaf $F$ on $\hat\Sigma$ a natural quasi-isomorphism of complexes $\Cd(R \pi_* F)\to \Cd(F)$. When $F$ is in $D^b(A_{\hat\Sigma}\text{-mod})$, this induces an isomorphism in $D^b(A\text{-mod})$
\[ R\Homd_A(\Cd(F), A) \to R\Homd_A(\Cd(R\pi_* F), A).\]

\begin{theorem} \label{thm-compat2}
Let $\pi: \hat\Sigma\to \Sigma$ be a subdivision and let $F$ be in $D^b(A_{\hat\Sigma}\text{-mod})$. 
\begin{enumerate}
   \item There is a natural isomorphism in $D^b(A_{\hat\Sigma}\text{-mod})$
  \[ R\pi_* \tilde{\DD} (F) \isom \tilde{\DD}( R\pi_* F).\]
  \item The isomorphism $\psi_\Sigma$ in Proposition~\ref{prop-dual} is compatible with $\pi_*$ in the sense that the following diagram commutes:
  \[ \begin{CD} R\Gamma(\tilde\DD(F)) @>{\psi_{\hat\Sigma}}>> R\Homd_A(\Cd(F), A) \\
@V{\isom}VV @V{\isom}VV\\
R\Gamma(\tilde\DD( R\pi_* F)) @>{\psi_\Sigma}>> R\Homd_A(\Cd(R\pi_* F), A)\\
\end{CD} \]  
  
\end{enumerate}
\end{theorem}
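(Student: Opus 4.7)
My plan is to prove both parts at once by constructing the isomorphism in part (1) through the cellular-complex realization of duality from Proposition~\ref{prop-dual}; the commutativity asked for in part (2) will then be automatic from the construction.

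First I would construct the isomorphism on global sections. For any $F \in D^b(A_{\hat\Sigma}\text{-mod})$, I chain the following natural isomorphisms in $D^b(A\text{-mod})$:
\[
R\Gamma_\Sigma(R\pi_* \tilde\DD(F)) = R\Gamma_{\hat\Sigma}(\tilde\DD(F)) \stackrel{\psi_{\hat\Sigma}}{\isom} R\Homd_A(\Cd(F), A) \isom R\Homd_A(\Cd(R\pi_* F), A) \stackrel{\psi_\Sigma}{\isom} R\Gamma_\Sigma(\tilde\DD(R\pi_* F)),
\]
where the first equality is the definition of pushforward on global sections, the two $\psi$-arrows are Proposition~\ref{prop-dual}, and the middle arrow is obtained by applying $R\Homd_A(-,A)$ to the cellular refinement quasi-isomorphism $\Cd(R\pi_* F) \isom \Cd(F)$ of Lemma~\ref{lem-cell}. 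By construction, this chain \emph{is} the commutative square of part (2), so part (2) follows the moment we read off part (1) as the composite of the outer two arrows.

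Next I would upgrade this global-sections isomorphism to an isomorphism of sheaves. Every ingredient in the chain above is natural under restriction to an open subfan $U \subset \Sigma$: taking the restricted subdivision $\pi_U: \pi^{-1}(U) \to U$ and repeating the argument, one obtains an isomorphism
\[ R\Gamma(U, R\pi_* \tilde\DD(F)) \isom R\Gamma(U, \tilde\DD(R\pi_* F)) \]
natural in $U$. Because complexes of sheaves in $D^b(A_\Sigma\text{-mod})$ are determined up to quasi-isomorphism by their $R\Gamma([\sigma],\cdot)$ together with the restriction maps to $R\Gamma([\tau],\cdot)$ for $\tau \leq \sigma$, naturality in $U$ promotes the equality above to the desired isomorphism in $D^b(A_\Sigma\text{-mod})$. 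To carry this out cleanly I need two facts: (a) $\tilde\DD$ commutes with restriction to open subfans; (b) the isomorphism $\psi$ is compatible with such restrictions. For (a), note that for any nonempty open subfan $U \subset \Sigma$ the minimum cone $\zz_U$ equals $\zz_\Sigma$ (since $U$ is closed under taking faces and must contain some cone), so $\tilde D_U = \tilde D_\Sigma|_U$; analogously on $\hat\Sigma$. Fact (b) is transparent from the construction of $\psi$ in Proposition~\ref{prop-dual} out of the injective resolution $I^\bullet$ of $\tilde D_\Sigma$ together with a fixed orientation of $V$, both of which restrict to any open subfan.

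\emph{Main obstacle.} The delicate point is the orientation bookkeeping that makes the cellular refinement map of Lemma~\ref{lem-cell} and the isomorphisms $\psi_\Sigma, \psi_{\hat\Sigma}$ compatible across the subdivision and across restriction to open subfans $[\sigma] \subset \Sigma$ and $\pi^{-1}([\sigma]) \subset \hat\Sigma$. In particular, the signs involved in $\Cd$, in the injective resolution $I^\bullet$ of $\tilde D$, and in the identification $H^0(\Cd) \isom \Gamma_c$ for quasi-convex fans must all be consistent so that the composite in Step 1 genuinely realizes $R\pi_* \tilde\DD \isom \tilde\DD \circ R\pi_*$; this requires choosing orientations on cones of $\hat\Sigma$ compatibly with those on $\Sigma$ (as already arranged in Section~\ref{sec-cell}) and tracing them through each square in the diagram of part (2).
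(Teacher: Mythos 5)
Your proposal takes essentially the same route as the paper: construct the part-(1) isomorphism as the composite $\psi_\Sigma^{-1}\circ(\Cd\text{-refinement})^*\circ\psi_{\hat\Sigma}$, read part (2) off for free, and then promote the global-sections statement to a sheaf isomorphism by checking compatibility with restrictions. The paper's proof is the same argument carried out stalk-by-stalk over $[\sigma]\leq[\nu]$. The one detail you gloss over that the paper makes explicit: to glue the local quasi-isomorphisms into an isomorphism in $D^b(A_\Sigma\text{-mod})$, you need the maps to be honest maps of complexes, not merely derived-category morphisms, which is why the paper first reduces to injective sheaves $F = A_{\st\tau}$ and notes that $\psi_\Sigma$ is defined as an equality of complexes (hence has a genuine inverse); your "determined up to quasi-isomorphism by $R\Gamma([\sigma],\cdot)$" claim tacitly assumes this, so it is worth stating it.
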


\begin{proof}
We construct the isomorphism in part (1) canonically for injective sheaves $F = A_{\st\tau}$, for $\tau\in\hat\Sigma$. To construct the isomorphism of stalks at $\sigma\in\Sigma$ we replace $\Sigma$ with $[\sigma]$, $\hat\Sigma$ with $\pi^{-1}[\sigma]$ and use the diagram in part (2) to define the left vertical arrow:
\[ R\pi_* \tilde{\DD} (F)_\sigma = R\Gamma(\pi^{-1}[\sigma], \tilde\DD(F)) \longrightarrow R\Gamma ([\sigma], \tilde\DD(\pi_* F)) = \tilde{\DD}( R\pi_* F)_\sigma.\]
(It is important here that the morphism $\psi_{\Sigma}$ is defined as an equality of complexes, hence it has an inverse.)

To see that these isomorphisms of stalks give isomorphisms of sheaves, note that for $\sigma \leq \nu$ there is an inclusion of complexes $\Cd([\sigma],\pi_* F)\to \Cd([\nu],\pi_* F)$, and a similar inclusion $\Cd(\pi^{-1}[\sigma], F)\to \Cd(\pi^{-1}[\nu],F)$. The mophism of complexes $\Cd(\pi_* F)\to \Cd(F)$ commutes with these inclusions. Dualizing the inclusion gives the surjection
\[ R\Homd_A(\Cd([\nu],\pi_* F), A) \to R\Homd_A(\Cd([\sigma],\pi_* F), A),\]
and a similar surjection on $\pi^{-1}[\nu]$ and $\pi^{-1}[\sigma]$. Via the isomorphisms $\psi_\Sigma$ and $\psi_{\hat\Sigma}$ the surjections correspond to restriction maps of sheaves. This implies that the isomorphism of stalks defined above gives an isomorphism of sheaves.

The same argument as in the previous paragraph proves the commutativity of the diagram in part (2). We need that the restriction map from $\Sigma$ to $[\sigma]$ (instead of from $[\nu]$ to $[\sigma]$) is compatible with $\psi_\Sigma$, $\psi_{\hat\Sigma}$ and the map of complexes $\Cd(\pi_* F)\to \Cd(F)$.
\end{proof}

Theorem~\ref{thm-compat2} and Lemma~\ref{lem-cell}  imply that if $\Sigma$ is  quasi-convex, then the pairing 
\[  R\Gamma(\tilde{\DD}(F)) \times R\Gamma_c (F) \to A\]
constructed on $\hat\Sigma$ is the same as the pairing
\[  R\Gamma(\tilde{\DD}(\pi_* F)) \times R\Gamma_c (\pi_* F) \to A\]
constructed on $\Sigma$.

\subsection{Hodge-Riemann bilinear relations.}
Let $\Sigma$ be a complete fan and $\tau\in\Sigma$, $c=\codim\tau$. We will use the \Po pairing $\langle\cdot,\cdot\rangle$ on $IH(\st\tau)$. For $l\in\cA^2(\Sigma)$, define the quadratic form $Q_l$ on $IH^{c-i}(\st\tau)$:
\[ Q_l(h) = \langle l^i h, h\rangle.\]

\begin{theorem}[Hodge-Riemann bilinear relations] \label{thm-HR}
Assume that $l$ is strictly convex on $\st\tau$. Then the quadratic form 
\[ (-1)^{\frac{c-i}{2}} Q_l \]
is positive definite of $\prim_l IH^{c-i}(\st\tau)$. 
\end{theorem}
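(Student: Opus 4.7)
The plan is to reduce Theorem~\ref{thm-HR} to the Hodge--Riemann bilinear relations for the intersection cohomology $IH(\Sigma)$ of pointed complete fans, which were established in \cite{Karu, BL2}. The reduction has two steps: first from the star $\st\tau$ of an arbitrary $\tau \in \Sigma$ to a complete fan in which $\Span\tau$ becomes the minimal cone, and second from non-pointed complete fans to pointed ones. Both steps must identify the \Po pairing, the action of $l$, and the grading correctly so that the sign in the positivity statement is maintained.

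First I would pass from $\st\tau$ to the complete fan $\Sigma_\tau = \{\sigma + \Span\tau \mid \sigma \in \st\tau\}$ of Section~\ref{sec-npfans}. This yields an isomorphism of ringed spaces over $A$, hence an identification $IH(\st\tau) = IH(\Sigma_\tau)$ that sends $l$ to a strictly convex PL function on $\Sigma_\tau$ and matches $\codim\tau$ with the codimension of the minimal cone of $\Sigma_\tau$. By Lemma~\ref{lem-dualL}, both the pairing on $\cL^\tau(\Sigma)$ and the pairing on $\cL_{\Sigma_\tau}(\Sigma_\tau)$ are constructed from a volume form on $\tau^\perp$, and they coincide under the identification for the same choice of form.

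Second, writing $\Sigma_\tau = \overline{\Sigma}_\tau \times \RR^m$ with $\overline{\Sigma}_\tau$ pointed complete and $\pi \colon \Sigma_\tau \to \overline{\Sigma}_\tau$ the projection, Lemma~\ref{lem-compat1} gives $\tilde\DD \circ \pi^* = \pi^* \circ \tilde\DD$ and shows that the pairing on $\Sigma_\tau$ is obtained from that on $\overline{\Sigma}_\tau$ by the flat base change $\otimes_{\bar A} A$. Reducing modulo the maximal homogeneous ideals identifies the induced \Po pairings on $IH(\Sigma_\tau) = IH(\overline{\Sigma}_\tau)$. Adjusting $l$ by a linear function as in Section~\ref{sec-npfans}, we may assume $l = \pi^* \bar l$ for some strictly convex $\bar l \in \cA^2(\overline{\Sigma}_\tau)$, so the Lefschetz operator, the primitive part, and the quadratic form $Q_l$ all correspond.

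This reduces Theorem~\ref{thm-HR} to the case of a pointed complete fan with $\tau = \zz$, where it is the Hodge--Riemann bilinear relations of \cite{Karu, BL2}. The argument there is a simultaneous induction with Theorem~\ref{thm-HL} on $\dim\Sigma$, exploiting the fact that the cone of strictly convex PL functions is convex and hence connected: by HL, $Q_l$ stays nondegenerate on $\prim_l IH^{c-i}(\Sigma)$ throughout any deformation, so the signature of $(-1)^{(c-i)/2} Q_l$ is locally constant, and the sign is pinned down at a single convenient $l$ using the inductive hypothesis applied to the star of a ray. The main difficulty therefore sits inside the cited result; the work remaining for us is only the matching of pairings and volume forms through the two reductions, which is ensured by Lemma~\ref{lem-dualL} and Lemma~\ref{lem-compat1}.
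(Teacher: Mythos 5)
Your reduction is exactly what the paper does: after stating Theorem~\ref{thm-HR}, it simply says the result is proved in \cite{Karu,BL2} for pointed fans and $IH(\Sigma)$, and that the case of $IH(\st\tau)$ and non-pointed fans is reduced to it ``the same way as in Section~\ref{sec-npfans}'', i.e., by passing to $\Sigma_\tau$ and then to a pointed factor via $\pi^*$. You have correctly spelled out the two reduction steps and, using Lemma~\ref{lem-dualL} and Lemma~\ref{lem-compat1}, the compatibility of the \Po pairings and of the action of $l$, which the paper leaves implicit.
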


The theorem is proved in \cite{Karu,BL2} for pointed fans $\Sigma$ and intersection cohomology $IH(\Sigma)$. The case of nonpointed fans and spaces $IH(\st\tau)$ can be reduced to this the same way as in Section~\ref{sec-npfans}. 

An equivalent statement of Theorem~\ref{thm-HR} is that for any $i\leq c/2$ the quadratic form $Q_l$ on $IH^{2i}(\st\tau)$ is nondegenerate and has signature
\[ \sum_{j<i} (-1)^j 2\dim IH^{2j}(\st\tau) + (-1)^i \dim IH^{2i}(\st\tau).\]
Hodge-Riemann bilinear relations imply the Hard Lefschetz theorem. The latter is equivalent to $Q_l$ being nondegenerate on $IH^{2i}(\st\tau)$.

\subsection{Products of fans.} \label{sec-products}

We list some results about products of fans that are needed in the proofs below. Since we need these results for pointed fans only, we refer to \cite{BL2} for proofs.

All fans in this subsection are assumed to be pointed.

\begin{lemma}[K\"unneth formula]
Let $\Sigma_i$ be a quasiconvex fan in $V_i$ for $i=1,2$, and $\pi_i: \Sigma_1\times\Sigma_2\to \Sigma_i$ the projection.
\begin{enumerate}
  \item There are natural $\cA(\Sigma_1)\otimes\cA(\Sigma_2)$-linear isomorphisms
  \begin{align*}
    IH(\Sigma_1\times\Sigma_2) &\isom IH(\Sigma_1)\otimes IH(\Sigma_2), \\
    IH(\Sigma_1\times\Sigma_2, \partial (\Sigma_1\times\Sigma_2)) &\isom IH(\Sigma_1, \partial\Sigma_1)\otimes IH(\Sigma_2, \partial\Sigma_2).
      \end{align*}
    The \Po duality pairing on the product $\Sigma_1\times\Sigma_2$ is induced by the pairing on each factor.
  \item If $\Sigma_i$ is complete and $l_i$ satisfies HL on $IH(\Sigma_i)$ for $i=1,2$, then $\pi_1^*l_1+\pi_2^* l_2 = l_1\otimes 1 + 1\otimes l_2$ satisfies HL on $IH(\Sigma_1\times\Sigma_2)$.
  \item If $\Sigma_i$ is complete and $l_i$ satisfies HR bilinear relations on $IH(\Sigma_i)$ for $i=1,2$, then $\pi_1^*l_1+\pi_2^* l_2$ satisfies HR bilinear relations on $IH(\Sigma_1\times\Sigma_2)$.
\end{enumerate}
\end{lemma}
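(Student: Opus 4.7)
The plan is to reduce part (1) to the identification of the IC sheaf of a product as the external tensor product of IC sheaves of the factors, and then handle parts (2) and (3) as standard linear algebra of polarized Lefschetz modules.

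For part (1), I would first define the external tensor product $\cF_1\boxtimes\cF_2$ on $\Sigma_1\times\Sigma_2$ with stalk at $(\sigma_1,\sigma_2)$ equal to $\cF_{1,\sigma_1}\otimes_\RR\cF_{2,\sigma_2}$, viewed as a module over $\cA_{\sigma_1\times\sigma_2}=\cA_{\sigma_1}\otimes_\RR\cA_{\sigma_2}$, with restriction maps the tensor products of those on the factors. A routine check shows $\cF_1\boxtimes\cF_2$ lies in $\fM(\Sigma_1\times\Sigma_2)$ whenever $\cF_i\in\fM(\Sigma_i)$. The key structural claim is $\cL_{\Sigma_1}\boxtimes\cL_{\Sigma_2}\isom\cL_{\Sigma_1\times\Sigma_2}$, proved by induction on $\dim\sigma_1+\dim\sigma_2$ using the inductive construction of the IC sheaf: the boundary decomposes as $\partial(\sigma_1\times\sigma_2)=(\partial\sigma_1\times[\sigma_2])\cup([\sigma_1]\times\partial\sigma_2)$, whose sections of $\cL_{\Sigma_1}\boxtimes\cL_{\Sigma_2}$ form a pushout whose minimal projective $\cA_{\sigma_1\times\sigma_2}$-cover is exactly $\cL_{\Sigma_1,\sigma_1}\otimes_\RR\cL_{\Sigma_2,\sigma_2}$. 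Taking global sections on a quasi-convex product then yields the canonical isomorphism
\[ \cL_{\Sigma_1\times\Sigma_2}(\Sigma_1\times\Sigma_2)\isom \cL_{\Sigma_1}(\Sigma_1)\otimes_\RR\cL_{\Sigma_2}(\Sigma_2), \]
and the analogous one relative to the boundary. Reducing modulo the maximal ideal $m=m_1\otimes A_2+A_1\otimes m_2$ gives the $IH$ isomorphisms in (1), which are $\cA(\Sigma_1)\otimes\cA(\Sigma_2)$-linear by construction.

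For the compatibility with Poincar\'e duality, I would invoke the description of the pairing from Proposition~\ref{prop-dual}. The cellular complex of an external tensor product is the tensor product of cellular complexes (with standard Koszul signs),
\[ \Cd(\Sigma_1\times\Sigma_2,\cF_1\boxtimes\cF_2)\isom \Cd(\Sigma_1,\cF_1)\otimes_\RR\Cd(\Sigma_2,\cF_2), \]
and the dualizing complexes satisfy $\tilde D_{\Sigma_1\times\Sigma_2}\isom \tilde D_{\Sigma_1}\boxtimes\tilde D_{\Sigma_2}$ after tracking the cohomological shift. These combine to give $\tilde\DD(\cF_1\boxtimes\cF_2)\isom\tilde\DD(\cF_1)\boxtimes\tilde\DD(\cF_2)$, so the pairing of Proposition~\ref{prop-dual} on the product factors as the tensor of the pairings on each factor. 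For parts (2) and (3), once (1) is in place, the statements reduce to the standard fact that polarized Lefschetz modules tensor well: if $H_i$ carries a degree-$2$ operator $L_i$ satisfying HL and a pairing satisfying HR, then $H_1\otimes H_2$ with $L=L_1\otimes 1+1\otimes L_2$ and the tensor pairing again satisfies HL and HR. HL is the weight-space isomorphism for the tensor $\mathfrak{sl}_2$-representation assembled from the two primitive decompositions; for HR, on a primitive piece of bidegree $(n_1-k_1,n_2-k_2)$ the sign $(-1)^{((n_1+n_2)-(k_1+k_2))/2}$ factors as $(-1)^{(n_1-k_1)/2}(-1)^{(n_2-k_2)/2}$, so positive definiteness on each factor multiplies to positive definiteness on the $L$-primitive part of the tensor.

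The main obstacle is the structural identification $\cL_{\Sigma_1}\boxtimes\cL_{\Sigma_2}\isom\cL_{\Sigma_1\times\Sigma_2}$, specifically verifying that the external tensor product stays indecomposable and that its stalk is the minimal projective cover over the local ring $\cA_{\sigma_1\times\sigma_2}$ of the pushout of boundary sections; everything that follows is a bookkeeping exercise with cellular complexes and orientations, or the universally valid tensor calculus for polarized Lefschetz modules.
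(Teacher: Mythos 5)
The paper itself does not prove this lemma; the sentence immediately preceding it says these product results are needed only for pointed fans and refers to \cite{BL2}, so there is no in-paper argument to compare against.

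Your sketch follows the standard route (external tensor product $\boxtimes$, identification of $\cL_{\Sigma_1\times\Sigma_2}$ with $\cL_{\Sigma_1}\boxtimes\cL_{\Sigma_2}$, K\"unneth at the level of sections and cellular complexes, then the tensor calculus of polarized Lefschetz packages), and the plan is essentially sound, with three corrections. First, ``pushout'' should be a pullback: sections over $\partial(\sigma_1\times\sigma_2)=(\partial\sigma_1\times[\sigma_2])\cup([\sigma_1]\times\partial\sigma_2)$ form the kernel term in the Mayer--Vietoris sequence $0\to\cF(\partial(\sigma_1\times\sigma_2))\to\cF(\partial\sigma_1\times[\sigma_2])\oplus\cF([\sigma_1]\times\partial\sigma_2)\to\cF(\partial\sigma_1\times\partial\sigma_2)\to 0$, exact on the right by flabbiness; chasing this shows the kernel of $\cL_{1,\sigma_1}\otimes\cL_{2,\sigma_2}\to(\cL_1\boxtimes\cL_2)(\partial(\sigma_1\times\sigma_2))$ is $\cL_1([\sigma_1],\partial\sigma_1)\otimes\cL_2([\sigma_2],\partial\sigma_2)\subseteq m\cdot(\cL_{1,\sigma_1}\otimes\cL_{2,\sigma_2})$, which is the minimality you need. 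Second, ``taking global sections then yields the canonical isomorphism'' conceals a genuine step: the section-level K\"unneth for $\boxtimes$ is not automatic, and should be deduced from $\Cd(\Sigma_1\times\Sigma_2,\cF_1\boxtimes\cF_2)\isom\Cd(\Sigma_1,\cF_1)\otimes\Cd(\Sigma_2,\cF_2)$ together with vanishing of higher cellular cohomology on quasi-convex fans --- you already invoke the cellular complex for the pairing, so use it here too. Third, the HR sign argument is too quick: the $L$-primitive subspace of $H_1\otimes H_2$ is not spanned by tensors of primitives of the two factors (Clebsch--Gordan produces lowest-weight vectors inside $L_1^j P_1\otimes L_2^{j'}P_2$ as well), so multiplying bidegree signs is not itself a proof. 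The fact that HL and HR pass to tensor products of polarized Lefschetz packages is standard and may be quoted as a black box, but the sign computation as written does not establish it.
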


Let $\Sigma$ be a fan in $V$ and $\tau\in\Sigma$. We say that $\Sigma$ has a {\em local product structure} at $\tau$ if every cone $\sigma\in\st\tau$ can be written as $\sigma = \tau+\sigma'$ for some $\sigma'\leq \sigma$ such that $\Span\sigma' \cap \Span\tau = 0$. 
Let $\overline{\st\tau}$ be the image of $\st\tau$ in $V/\Span \tau$.

When $\Sigma$ has a local product structure at $\tau$ then we can compare $[\st\tau]$ in $\Sigma$ and in $[\tau]\times \overline{\st\tau}$. The two are isomorphic as ringed spaces over $A$, hence they have isomorphic interesection cohomology.

\begin{lemma}
Let $\Sigma$ have a local product structure at $\tau$. Then we have an $\cA(\overline{\st\tau})$-linear isomorphism
\[ IH([\st\tau]) = IH(\st\tau) \isom IH(\overline{\st\tau}) \otimes \overline{\cL_\tau}.\]
\end{lemma}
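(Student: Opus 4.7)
The strategy is to identify $[\st\tau]$ as a ringed space over $A$ with the product fan $[\tau] \times \overline{\st\tau}$, then apply the K\"unneth formula from the previous subsection. The local product structure at $\tau$ provides, for every $\sigma \in \st\tau$, a unique decomposition $\sigma = \tau + \sigma'$ with $\Span\sigma' \cap \Span\tau = 0$, and an induction on dimension shows that every face of $\sigma$ is of the form $\tau' + \sigma''$ with $\tau' \leq \tau$ and $\sigma'' \leq \sigma'$. This yields a poset bijection between $[\st\tau]$ and the cones of $[\tau] \times \overline{\st\tau}$. Choosing any linear splitting $V \isom \Span\tau \oplus V/\Span\tau$ identifies the corresponding cones as subsets of the respective ambient vector spaces and therefore matches the structure sheaves (and the $A$-action) on both sides.

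Given the ringed space isomorphism, the K\"unneth lemma applied to the quasi-convex fans $[\tau]$ (in $\Span\tau$) and $\overline{\st\tau}$ (in $V/\Span\tau$) gives
\[ IH([\st\tau]) \isom IH([\tau]) \tensor IH(\overline{\st\tau}) \]
as $\cA_\tau \tensor \cA(\overline{\st\tau})$-modules. Next I would compute $IH([\tau])$: since $\tau$ is the unique maximal cone of $[\tau]$, global sections of the intersection cohomology sheaf on $[\tau]$ agree with its stalk at $\tau$; by locality of the inductive construction of $\cL$, this stalk equals the stalk $\cL_\tau$ of $\cL_\Sigma$ at $\tau$. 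Reducing modulo $m$ gives $IH([\tau]) = \overline{\cL_\tau}$, and substituting completes the proof. The $\cA(\overline{\st\tau})$-linearity is automatic, because the $\cA_\tau$-factor acts trivially on $\overline{\cL_\tau}$ after reduction modulo $m$ (the positive-degree part of $\cA_\tau$ lies in $m$).

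The main obstacle is the interface with the K\"unneth formula, which in the cited subsection is stated for pointed fans while here $[\tau]$ need not be pointed and, in any case, lives in $\Span\tau \subsetneq V$ rather than in $V$ itself. I would address this by viewing $[\tau]$ as a (pointed, after quotienting out its lineality) quasi-convex fan in $\Span\tau$ and applying K\"unneth there; the compatibility of intersection cohomology with the pullback functor $\pi^*$ recorded in Lemma~\ref{lem-compat1} then transports the identity back to the original ambient space. A minor, essentially bookkeeping point is that the splitting $V \isom \Span\tau \oplus V/\Span\tau$ is not canonical, but the resulting isomorphism of intersection cohomologies is, since both sides depend only on the ringed space structure and not on the splitting.
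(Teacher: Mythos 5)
Your proof follows the same route the paper sketches in the paragraph preceding the lemma: identify $[\st\tau]$ with $[\tau]\times\overline{\st\tau}$ as ringed spaces over $A$, apply K\"unneth, and observe that $IH([\tau])=\overline{\cL_\tau}$ because $\tau$ is the unique maximal cone of $[\tau]$. The one small caveat is that you invoke the quoted K\"unneth lemma for the pair of quasi-convex fans $[\tau]$ and $\overline{\st\tau}$, yet the present lemma does not assume $\overline{\st\tau}$ quasi-convex; this is harmless because $[\tau]$ has a single maximal cone, so $\Gamma\bigl([\tau]\times\Sigma',\,F\boxtimes G\bigr)=F_\tau\otimes_\RR\Gamma(\Sigma',G)$ holds with no hypothesis on $\Sigma'$ (the kernel defining global sections commutes with tensoring by the vector space $F_\tau$), and reduction modulo $m$ then factors as in your argument.
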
 

Consider the action of $l\in\cA^2(\Sigma)$ on $IH([\st\tau])$. We may change $l$ by a global linear function and assume that it vanishes on $\tau$. Hence $l$ is pulled back from $\overline{\st\tau}$ and it acts on $IH(\overline{\st\tau}) \otimes \overline{\cL_\tau}$ by acting on the first factor.

\begin{lemma} \label{lem-loc-prod}
Let $\Sigma$ have a local product structure at $\tau$ and let $l\in\cA^2(\Sigma)$. Assume that $\overline{\st\tau}$ is complete and $l$ defines a \LO on $IH(\overline{\st\tau})$ (for example, when $l$ is strictly convex). Then 
\[ l^i: IH^{n+j-i}([\st\tau],\partial[\st\tau]) \to IH^{n+j+i}([\st\tau],\partial[\st\tau])\]
is injective for any $j\leq \dim\tau$, $n=\dim\Sigma$.
\end{lemma}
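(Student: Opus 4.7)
I follow the standard Künneth-plus-Hard-Lefschetz strategy.

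\emph{Step 1 (Künneth).} The local product structure gives $[\st\tau]\isom[\tau]\times\overline{\st\tau}$ as ringed spaces over $A$, with $\partial[\st\tau]\isom\partial\tau\times\overline{\st\tau}$ (using $\partial\overline{\st\tau}=\emptyset$ because $\overline{\st\tau}$ is complete). The relative Künneth formula from Subsection~\ref{sec-products} then yields a natural $\cA([\tau])\otimes\cA(\overline{\st\tau})$-linear isomorphism
\[
 IH^{*}([\st\tau],\partial[\st\tau])\;\isom\;IH^{*}([\tau],\partial\tau)\otimes IH^{*}(\overline{\st\tau}).
\]

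\emph{Step 2 (Isolating $\bar l$).} By the observation in the paragraph preceding the lemma, one may subtract a global linear function from $l$ and assume $l$ vanishes on $\tau$; then $l$ is the pullback of some $\bar l\in\cA^{2}(\overline{\st\tau})$, which still defines a Lefschetz operation on $IH(\overline{\st\tau})$. Under the Künneth decomposition, $l^{i}$ acts as $\mathrm{id}\otimes\bar l^{\,i}$.

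\emph{Step 3 (Hard Lefschetz).} On each nonzero Künneth summand $IH^{p}([\tau],\partial\tau)\otimes IH^{q}(\overline{\st\tau})$ of the source (with $p+q=n+j-i$), the action of $l^{i}$ is $\mathrm{id}\otimes\bar l^{\,i}$, so injectivity reduces summand-wise to injectivity of $\bar l^{\,i}:IH^{q}(\overline{\st\tau})\to IH^{q+2i}(\overline{\st\tau})$. By Corollary~\ref{cor-degrees}(2) applied in the fan $[\tau]$ (with $\codim_{\Span\tau}\zz=d$), the module $\cL_{[\tau]}([\tau],\partial\tau)$ is generated in degrees strictly greater than $d$, and Poincaré duality for $([\tau],\partial\tau)$ further pushes the support of $IH^{*}([\tau],\partial\tau)$ upward. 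Combined with $j\leq\dim\tau=d$, the transferred degree satisfies
\[
 q \;=\; n+j-i-p \;\leq\; n-d-i \;=\; \dim\overline{\st\tau}-i,
\]
placing $q$ within the Hard Lefschetz injective range on the complete fan $\overline{\st\tau}$. The Lefschetz hypothesis on $\bar l$ then yields injectivity of $\bar l^{\,i}$, and hence of $l^{i}$.

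\emph{Main obstacle.} The delicate bookkeeping lies in Step~3: extracting, from Corollary~\ref{cor-degrees} and Poincaré duality on the pair $([\tau],\partial\tau)$, a lower bound on $p$ sharp enough to ensure that the transferred degree $q=n+j-i-p$ lies within the Hard Lefschetz injective range $q\leq\dim\overline{\st\tau}-i$ for all $j\leq\dim\tau$. It is precisely the matching between the duality shift on $([\tau],\partial\tau)$ and the inequality $j\leq\dim\tau$ that makes the bound on $j$ tight.
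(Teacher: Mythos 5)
Your Step~1 and Step~2 match the spirit of the paper's proof, but you work on the ``opposite side'' of the Poincar\'e pairing. The paper dualizes first and shows that $l^i: IH^{n-j-i}([\st\tau]) \to IH^{n-j+i}([\st\tau])$ is surjective, using the decomposition $IH([\st\tau]) \isom IH(\overline{\st\tau}) \otimes \overline{\cL_\tau}$ from Lemma 3.13; you skip the dualization and apply the relative K\"unneth formula directly to the pair, decomposing $IH([\st\tau],\partial[\st\tau]) \isom IH([\tau],\partial\tau) \otimes IH(\overline{\st\tau})$ and using injectivity of $\bar l^{\,i}$ on $\overline{\st\tau}$. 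These are genuinely the same argument viewed from the two ends of the pairing (since $IH([\tau],\partial\tau)$ is the Poincar\'e dual of $\overline{\cL_\tau}$), and your version makes the degree bookkeeping more explicit.

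That explicitness, however, exposes a gap in Step~3 that your own ``main obstacle'' paragraph gestures at but does not close. You assert $q = n+j-i-p \leq n-d-i$, which requires $p \geq d+j$; but Corollary~\ref{cor-degrees}(2) combined with the parity of the grading only yields $p > d$, i.e.\ $p \geq d+2$ (or $p \geq d+1$ when $d$ is odd and $\overline{\cL_\tau}$ reaches degree $d-1$). That gives $q \leq m + j - i - 2$, which lands in the Hard-Lefschetz injective range $q \leq m - i$ only when $j \leq 2$ (respectively $j \leq 1$), not for all $j \leq d$. When $\tau$ is simplicial one has $\overline{\cL_\tau} = \RR$ concentrated in degree $0$, so $p = 2d$, the bound $p \geq d+j$ holds for every $j \leq d$, and the argument goes through; but for non-simplicial $\tau$ the K\"unneth summand with the smallest $p$ (equivalently, the summand of $IH([\st\tau])$ with the largest $\overline{\cL_\tau}$-degree) lands $q$ above the middle degree of $\overline{\st\tau}$, where $\bar l^{\,i}$ is no longer injective. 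This is the same issue lurking in the paper's own one-line reduction (``this is equivalent to\dots''), and it is harmless there because the lemma is only ever invoked with $j \leq 2$ together with $\dim\tau \geq 2$; your write-up, by contrast, explicitly claims the full range $j \leq \dim\tau$ without supplying the missing lower bound on $p$.
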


\begin{proof}
The claim is equivalent to the dual map of $l^i$ being surjective. The dual of $l^i$ is
\[ l^i: IH^{n-j-i}([\st\tau]) \to IH^{n-j+i}([\st\tau]).\]
Since $l$ acts on the first factor of $IH(\overline{\st\tau}) \otimes \overline{\cL_\tau}$ and $\overline{\cL_\tau}$ lies in non-negative degrees, this is equivalent to 
 \[ l^i: IH^{n-j-i}(\overline{\st\tau}) \to IH^{n-j+i}(\overline{\st\tau})\]
being surjective, which follows from the Hard Lefschetz property because $n-j \geq \dim \overline{\st\tau}$.
\end{proof}

Let $\tau\in\Sigma$ be simplicial. Choose a generator $\chi_\tau$ for $\cA([\tau],\partial\tau)$ such that $\chi_\tau$ is positive in the interior of $\tau$.

\begin{lemma} \label{lem-loc-mult}
  Let $\Sigma$ have a local product structure at a simplicial cone $\tau$. Assume that $\overline{\st\tau}$ is complete.
\begin{enumerate}
  \item We have an $\cA(\overline{\st\tau})$-linear isomorphism
  \[ IH(\overline{\st\tau}) \isom IH([\st\tau]).\]
\item Multiplication with $\chi_\tau$ defines an $\cA([\st\tau])$-linear isomorphism
\[  \chi_\tau: IH([\st\tau]) \to IH([\st\tau],\partial [\st\tau]).\]
\item Combining the two isomorphisms above, the \Po duality pairings on the quasi-convex fan $[\st\tau]$ and the complete fan $\overline{\st\tau}$ agree in the sense that
\[ \langle\cdot,\cdot\rangle_{\overline{\st\tau}} = \langle\chi_\tau \cdot,\cdot\rangle_{[\st\tau]}\]
up to a positive constant factor that depends on $\chi_\tau$.
\end{enumerate}
\end{lemma}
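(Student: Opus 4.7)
The plan is to reduce everything on $[\st\tau]$ to a tensor product over the pointed simplicial cone $[\tau]$ and the pointed complete fan $\overline{\st\tau}$ via the local product identification, then invoke the K\"unneth formula from the products-of-fans subsection.

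For part (1), since $\tau$ is simplicial we have $\cL_\tau = \cA_\tau$, so $\overline{\cL_\tau} \isom \RR$ concentrated in degree zero. The preceding local-product lemma then gives
\[ IH([\st\tau]) \isom IH(\overline{\st\tau}) \otimes \overline{\cL_\tau} \isom IH(\overline{\st\tau}), \]
which is the desired $\cA(\overline{\st\tau})$-linear isomorphism.

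For part (2), on the simplicial cone $[\tau]$ the intersection cohomology sheaf is simply $\cA_{[\tau]}$, so the K\"unneth decomposition applied to $[\tau]\times\overline{\st\tau}\isom [\st\tau]$ yields
\[ IH([\st\tau]) \isom \overline{\cA([\tau])} \otimes IH(\overline{\st\tau}), \qquad IH([\st\tau],\partial[\st\tau]) \isom \overline{\cA([\tau],\partial\tau)} \otimes IH(\overline{\st\tau}). \]
Both $\cA([\tau])$ and $\cA([\tau],\partial\tau)$ are free rank-one $\cA_\tau$-modules, generated by $1$ and $\chi_\tau$ respectively; after reducing modulo the maximal homogeneous ideal, the classes $\overline{1}$ and $\overline{\chi_\tau}$ are generators of the respective one-dimensional spaces. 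Multiplication by $\chi_\tau$ sends $\overline{1}\otimes a$ to $\overline{\chi_\tau}\otimes a$, giving the desired $\cA([\st\tau])$-linear isomorphism.

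For part (3), I would invoke the K\"unneth factorization of the Poincar\'e pairing: under the product identification, the pairing on $[\st\tau]$ is the tensor product of the pairings on $[\tau]$ and on $\overline{\st\tau}$. Passing to cohomology, the pairing on the factor $[\tau]$ collapses to the scalar $c := \langle\overline{\chi_\tau}, \overline{1}\rangle_{[\tau]}$, so for $a,b \in IH(\overline{\st\tau})$,
\[ \langle \chi_\tau(\overline{1}\otimes a), \overline{1}\otimes b\rangle_{[\st\tau]} = c\cdot \langle a,b\rangle_{\overline{\st\tau}}. \]
The main obstacle I anticipate is verifying $c>0$. This is a matter of bookkeeping with orientations: the duality functor, the chosen orientation of $\tau$, and the convention that $\chi_\tau$ is positive in the interior of $\tau$ all need to be aligned. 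With the orientation conventions used in the construction of $\psi_\Sigma$ (the orientation of $\tau$ times the orientation of $V/\Span\tau$ yielding the orientation of $V$), the pairing $\langle\overline{\chi_\tau}, \overline{1}\rangle_{[\tau]}$ reduces in the simplicial case to Brion's multiplication-of-functions pairing, which is a positive multiple of the volume form on $\tau$. Hence $c>0$, completing (3).
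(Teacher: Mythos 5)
Your argument is correct. Note that the paper itself gives no proof of this lemma: it is listed under the blanket statement at the start of Section~2.8 that proofs are referred to \cite{BL2}, so there is no in-paper argument to compare against. Your K\"unneth strategy is the natural one and reconstructs what a proof would look like. Parts (1) and (2) are clean: for simplicial $\tau$ one has $\overline{\cL_\tau}\isom\RR$ in degree~$0$, $\cA([\tau],\partial\tau)$ is free of rank one over $\cA_\tau$ with generator $\chi_\tau$, and multiplication by $\chi_\tau$ is the tensor product of a rank-one isomorphism on the $[\tau]$-factor with the identity on $IH(\overline{\st\tau})$. For (3), the K\"unneth lemma in the same subsection says explicitly that the pairing on the product is induced by the pairing on each factor, so everything reduces to the scalar $c=\langle\overline{\chi_\tau},\overline{1}\rangle_{[\tau]}$ as you identify. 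The one place your write-up is soft is the positivity of $c$. Your appeal to orientation conventions in $\psi_\Sigma$ is correct in spirit but hard to audit; a tighter route, entirely within the tools the paper records, is this: the Bressler--Lunts pairing is independent of the sign of the volume form (it enters twice), agrees with Brion's multiplication-of-functions pairing up to a positive factor of $n!$ on simplicial fans, and is compatible with inclusions of quasi-convex fans of the same dimension. So complete $[\tau]$ to a complete simplicial fan by coning over $\partial[\tau]$ from a ray $\bar\rho$ with $-\bar\rho\in\Int\tau$, extend $\chi_\tau$ by zero, and evaluate $\langle\chi_\tau,1\rangle$ by Brion's explicit formula; the result is a positive multiple of $\vol_\Omega(\tau)^{-1}$ because $\chi_\tau$ is nonnegative, hence $c>0$. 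With that made explicit the proof is complete.
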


\section{The filtration $\tau_{\leq p}$.} \label{sec-tau}

In this section we define the analogue of the perverse filtration for sheaves $\cF$ in $\fM = \fM(\Sigma)$:
\[ \cdots \subseteq \tau_{\leq p-1}\cF  \subseteq \tau_{\leq p}\cF \subseteq \tau_{\leq p+1}\cF \subseteq \cdots \subseteq\cF,  \]
and study its compatibility with morphisms in $\fM$ and the intersection pairing. As was mentioned in the introduction, we take $\cL^\sigma(\codim\sigma)$ as our basic indecomposable sheaves in $\fM$ and study decompositions 
\[ \cF = \oplus_\sigma W_\sigma \otimes \cL^\sigma(\codim\sigma).\]
Of particular interest to us is the decomposition of the sheaf $\cF = \pi_*\big(\cL_{\hat\Sigma}(\codim\hat\zz)\big)$ for a subdivision $\pi: \hat\Sigma\to \Sigma$. Here and in the following we denote $\zz = \zz_\Sigma$ and $\hat\zz = \zz_{\hat\Sigma}$.

\subsection{Definition of $\tau_{\leq p}$.}
For any $\cF$ in $\fM$, fix a decomposition
\[ \cF = \oplus_\sigma W_\sigma\tensor \cL^\sigma(\codim\sigma),  \qquad W_\sigma = \oplus_i W_\sigma^i,\]
and define for $p\in\ZZ$
\[ \tau_{\leq p} \cF = \bigoplus_\sigma \bigoplus_{i\leq p} W_\sigma^i \tensor \cL^\sigma(\codim\sigma).\]

\begin{proposition}
Let $\cF, \cG$ be sheaves in $\fM$ with fixed decompositions. Then any morphism $\cF \to \cG$ in $\fM$ maps $\tau_{\leq p} \cF$ to $\tau_{\leq p} \cG$.
\end{proposition}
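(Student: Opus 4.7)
The plan is to reduce to the case of a morphism between a single summand $\cL^\sigma(\codim\sigma - i)$ of $\cF$ (corresponding to an element of $W_\sigma^i$) and a single summand $\cL^\tau(\codim\tau - i')$ of $\cG$ (corresponding to an element of $W'^{i'}_\tau$), and apply the restated form of Theorem~\ref{thm-mor} to force $i' \leq i$.

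First, I would unravel the grading convention. Writing $W_\sigma^i \otimes \cL^\sigma(\codim\sigma)$ explicitly as a graded sheaf, its degree-$k$ part is $\bigoplus_{j} W_\sigma^i \otimes \cL^\sigma{}^{j+\codim\sigma}$ with $i+j=k$. Thus as a graded sheaf this summand is a direct sum of copies of $\cL^\sigma(\codim\sigma - i)$, one for each basis vector of $W_\sigma^i$. Using the chosen decompositions of $\cF$ and $\cG$, a morphism $\phi:\cF\to\cG$ in $\fM$ decomposes into components
\[ \phi_{(\sigma,i),(\tau,i')}:\, \cL^\sigma(\codim\sigma - i) \longrightarrow \cL^\tau(\codim\tau - i'), \]
one for each pair of basis vectors in $W_\sigma^i$ and $W'^{i'}_\tau$; grading preservation means this is a morphism of graded sheaves in the sense used by the decomposition theorem.

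Next, I would rewrite each such component as a morphism $\cL^\sigma(\codim\sigma) \to \cL^\tau(\codim\tau + (i - i'))$ by a uniform shift of the grading. By the restatement of Theorem~\ref{thm-mor} given immediately after its proof in the excerpt, any nonzero morphism of this form requires $i - i' \geq 0$, i.e.\ $i' \leq i$. Therefore components of $\phi$ with $i' > i$ must vanish.

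Combining these observations: if $i \leq p$, then every nonzero component emanating from $W_\sigma^i \otimes \cL^\sigma(\codim\sigma)$ lands in a summand $W'^{i'}_\tau \otimes \cL^\tau(\codim\tau)$ with $i' \leq i \leq p$, which by definition lies in $\tau_{\leq p}\cG$. Summing over $\sigma$ and over $i \leq p$ gives $\phi(\tau_{\leq p}\cF)\subset \tau_{\leq p}\cG$. There is no real obstacle here; the only subtle point is keeping track of how the shift $(\codim\sigma)$ interacts with the internal degree $i$ of $W_\sigma^i$, which is exactly the reason Theorem~\ref{thm-mor} was restated with these shifts in the form appearing just before the Corollary on $\Hom_\fM(\cL^\tau,\cL^\tau)$.
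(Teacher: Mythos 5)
Your proof is correct and is exactly the argument the paper has in mind; the paper's proof simply says ``This follows immediately from Theorem~\ref{thm-mor},'' and you have spelled out precisely the componentwise reduction and degree bookkeeping that the word ``immediately'' elides.
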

\begin{proof}
This follows immediately from Theorem~\ref{thm-mor}.
\end{proof}

\begin{corollary}
The subsheaf $\tau_{\leq p} \cF \subseteq \cF$ is independent of the chosen decomposition.
\end{corollary}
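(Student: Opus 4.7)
The plan is to deduce the corollary immediately from the proposition by applying it to the identity morphism. Let $\cF = \bigoplus_\sigma W_\sigma \otimes \cL^\sigma(\codim\sigma)$ and $\cF = \bigoplus_\sigma W'_\sigma \otimes \cL^\sigma(\codim\sigma)$ be two decompositions of $\cF$, giving rise to subsheaves $\tau_{\leq p}\cF$ and $\tau'_{\leq p}\cF$ respectively. I would view the identity morphism $\Id_\cF$ as a morphism in $\fM$ from $\cF$ equipped with the first decomposition to $\cF$ equipped with the second decomposition. The previous proposition then yields $\Id_\cF(\tau_{\leq p}\cF)\subseteq \tau'_{\leq p}\cF$, i.e.\ $\tau_{\leq p}\cF\subseteq \tau'_{\leq p}\cF$.

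Swapping the roles of the two decompositions and applying the proposition once more to $\Id_\cF$ gives the reverse inclusion $\tau'_{\leq p}\cF\subseteq \tau_{\leq p}\cF$. Combining the two inclusions shows that $\tau_{\leq p}\cF$ is independent of the chosen decomposition.

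There is essentially no obstacle here: the real content lies in the proposition (and hence in Theorem~\ref{thm-mor}), which forces any morphism between direct sums of the normalized indecomposables $\cL^\sigma(\codim\sigma)$ to have nonnegative degree on each $\Hom(\cL^\sigma(\codim\sigma),\cL^\tau(\codim\tau))$ summand. Once that is in hand, the symmetry argument above is automatic.
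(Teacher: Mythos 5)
Your proof is correct and follows exactly the paper's own argument: apply the preceding proposition to the identity morphism of $\cF$, viewed with the two different decompositions, to obtain both inclusions.
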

\begin{proof}
If we have two subsheaves $\tau_{\leq p} \cF$ and $\tau'_{\leq p} \cF$ defined by two different decompositions, then the identity morphism $id:\cF\to \cF$ maps one to the other.
\end{proof}

Let $\fM^{\leq p}$ be the full subcategory of $\fM$ consisting of sheaves isomorphic to finite direct sums of sheaves $\cL^\sigma(\codim\sigma -i)$, where $i\leq p$. The previous proposition and corollary prove:

\begin{theorem}
  $\tau_{\leq p}$ defines a functor
\[ \tau_{\leq p}: \fM \to \fM^{\leq p}.\]
This functor is right adjoint to the inclusion functor $\fM^{\leq p} \to \fM$.
\end{theorem}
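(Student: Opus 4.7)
The plan is to split the verification into two parts: functoriality of $\tau_{\leq p}$ on morphisms, followed by the right-adjoint property. For functoriality, I note that by construction $\tau_{\leq p}\cF = \bigoplus_\sigma \bigoplus_{i\leq p}W_\sigma^i\otimes \cL^\sigma(\codim\sigma)$ is a direct sum of shifts $\cL^\sigma(\codim\sigma-i)$ with $i\leq p$, so $\tau_{\leq p}\cF$ lies in $\fM^{\leq p}$. For a morphism $\phi:\cF\to\cG$ in $\fM$, the preceding proposition guarantees $\phi(\tau_{\leq p}\cF)\subseteq \tau_{\leq p}\cG$, so I declare $\tau_{\leq p}(\phi)$ to be the restriction of $\phi$ to these subsheaves; respect for identities and compositions is then automatic, and the preceding corollary ensures this definition depends only on the data $(\cF,\cG,\phi)$ and not on the chosen decompositions.

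For the adjunction, I will exhibit unit and counit and check the two triangle identities. Write $\iota:\fM^{\leq p}\to \fM$ for the inclusion functor. The counit at $\cF\in\fM$ is the inclusion $\epsilon_\cF: \iota\tau_{\leq p}\cF \hookrightarrow \cF$ of the subsheaf. For the unit, I observe that if $\cG\in\fM^{\leq p}$ then any decomposition witnessing this membership has $W_\sigma^i=0$ for $i>p$, whence $\tau_{\leq p}\cG = \cG$; I take $\eta_\cG:\cG \to \tau_{\leq p}\iota\cG$ to be the identity. The first triangle identity $\epsilon_{\iota\cG}\circ \iota\eta_\cG = \mathrm{id}_{\iota\cG}$ is immediate since both factors are identities. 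The second triangle identity reduces to checking that applying $\tau_{\leq p}$ to the inclusion $\tau_{\leq p}\cF\hookrightarrow \cF$ yields the identity on $\tau_{\leq p}\cF$, which holds because $\tau_{\leq p}(\tau_{\leq p}\cF) = \tau_{\leq p}\cF$.

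The only step requiring content beyond bookkeeping is the naturality of $\epsilon$, namely that morphisms in $\fM$ send $\tau_{\leq p}\cF$ into $\tau_{\leq p}\cG$ --- but this is precisely the preceding proposition, whose proof rested on Theorem~\ref{thm-mor}. Naturality of $\eta$ is trivial since $\eta$ is an identity wherever it is defined. I therefore expect no genuine obstacle: the theorem follows essentially for free from the structural results already established in this subsection, and the main conceptual input (the degree inequality governing morphisms between indecomposable sheaves) has already been absorbed into the proposition.
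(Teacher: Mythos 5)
Your proof is correct and takes essentially the same approach as the paper, which simply asserts that the theorem ``follows from the previous proposition and corollary'' without writing out the unit, counit, and triangle identities; your additional detail faithfully fills in exactly that bookkeeping.
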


One can define similarly $\tau_{\geq p} \cF$ with a canonical surjection $\cF\to \tau_{\geq p} \cF$, and the category $\fM^{\geq p}$. The pair of categories $(\fM^{\leq 0},\fM^{\geq 0})$ satisfies all the properties of a t-structure on $\fM$, although the categories here are not triangulated.

For $\cF$ in $\fM$, define
\[ \Gr_p \cF = \tau_{\leq p}\cF / \tau_{\leq p-1} \cF, \qquad \Gr \cF = \oplus_p \Gr_p \cF.\]
Let $\fM^p$ be the full subcategory of $\fM$ consisting of sheaves isomorphic to finite direct sums of sheaves $\cL^\sigma(\codim\sigma -p)$. Then $Gr_p$ defines a functor 
\[ Gr_p: \fM\to \fM^p.\]
The category $\fM^p$ has a very simple structure:

\begin{lemma} \label{lem-Mp}
\begin{enumerate}
  \item An object $\cF$ in $\fM^p$ decomposes uniquely $\cF = \oplus_\sigma W_\sigma \otimes \cL^\sigma(\codim\sigma)$.
  \item If $\cF = \oplus_\sigma W_\sigma \otimes \cL^\sigma(\codim\sigma)$ and $\cG = \oplus_\sigma U_\sigma \otimes \cL^\sigma(\codim\sigma)$ are in $\fM^p$, then 
  \[ \Hom_{\fM^p} (\cF, \cG) = \prod_\sigma \Hom_\RR(W_\sigma,U_\sigma).\]
\end{enumerate}
\end{lemma}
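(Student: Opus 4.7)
For part (1), the plan is to combine the Decomposition Theorem with Lemma~\ref{lem-prec-decomp}. Starting from any sheaf $\cF$ in $\fM$, the Decomposition Theorem provides a decomposition $\cF = \bigoplus_\sigma W_\sigma \otimes \cL^\sigma$, which I rewrite as $\cF = \bigoplus_\sigma W'_\sigma \otimes \cL^\sigma(\codim\sigma)$ by setting $W'_\sigma := W_\sigma(\codim\sigma)$. The assumption $\cF \in \fM^p$ means that every indecomposable summand of $\cF$ is of the form $\cL^\sigma(\codim\sigma - p)$, and this forces each graded vector space $W'_\sigma$ to be concentrated in the single degree $p$. Uniqueness of the $W'_\sigma$'s up to isomorphism is then immediate from Lemma~\ref{lem-prec-decomp}, which gives the canonical identification $W_\sigma \cong \ker\!\big(\overline{\cF}_\sigma \to \overline{\cF(\partial\sigma)}\big)$ and thereby pins down the dimensions of all the $W'_\sigma$ from $\cF$ alone.

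For part (2), I will reduce to computing morphisms between pairs of indecomposable summands. Any morphism $\phi\colon \cF\to \cG$ in $\fM^p$ breaks up as a matrix of morphisms $\cL^\sigma(\codim\sigma - p) \to \cL^\tau(\codim\tau - p)$, one for each pair of summands. Shifting both source and target by $p - \codim\sigma$ identifies such a degree-$0$ morphism with a morphism $\cL^\sigma \to \cL^\tau(\codim\tau - \codim\sigma)$, which is exactly the equality case of the bound in Theorem~\ref{thm-mor}. That theorem then forces the morphism to vanish unless $\sigma = \tau$, in which case the corollary immediately following it gives $\Hom_\fM(\cL^\sigma, \cL^\sigma) = \RR$, so the space of such maps is one-dimensional. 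Hence only the diagonal blocks $\sigma = \tau$ survive, and on each diagonal block the morphism is pinned down by an arbitrary $\RR$-linear map $W_\sigma \to U_\sigma$; collecting these data across $\sigma$ yields the product formula of part (2).

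No serious obstacle arises; the entire argument rests on Theorem~\ref{thm-mor} together with Lemma~\ref{lem-prec-decomp}. The one point requiring care is bookkeeping with the shift convention — specifically, verifying that a degree-$0$ morphism $\cL^\sigma(\codim\sigma - p) \to \cL^\tau(\codim\tau - p)$ is the same as a morphism $\cL^\sigma\to \cL^\tau(\codim\tau - \codim\sigma)$, which is precisely the critical equality case of Theorem~\ref{thm-mor} regardless of the value of $p$ (so the parameter $p$ plays no essential role beyond fixing the common shift).
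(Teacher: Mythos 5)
Your argument for part (2) is correct and is precisely the paper's argument: the degree bookkeeping reducing a degree-$0$ map $\cL^\sigma(\codim\sigma-p)\to\cL^\tau(\codim\tau-p)$ to a map $\cL^\sigma\to\cL^\tau(\codim\tau-\codim\sigma)$ is right, and then Theorem~\ref{thm-mor} together with $\Hom_\fM(\cL^\sigma,\cL^\sigma)=\RR$ gives the product formula.

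For part (1), however, your route diverges from the paper's and lands on a strictly weaker conclusion. Invoking Lemma~\ref{lem-prec-decomp} shows that $W_\sigma$ is \emph{canonically identified up to isomorphism} with $\ker\big(\overline{\cF}_\sigma\to\overline{\cF(\partial\sigma)}\big)$, i.e.\ its dimension vector is determined by $\cF$. But this holds for \emph{every} sheaf in $\fM$, as the paper already notes after the Decomposition Theorem, and yet the paper also stresses that for a general $\cF\in\fM$ the embedding $W_\tau\otimes\cL^\tau\subset\cF$ is \emph{not} unique (it depends on a choice of lifting). The content of part (1) is that in $\fM^p$ the decomposition is genuinely unique, meaning the subsheaves $W_\sigma\otimes\cL^\sigma(\codim\sigma)\subset\cF$ themselves are determined, and this is exactly what fails outside $\fM^p$. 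Your appeal to Lemma~\ref{lem-prec-decomp} does not see this distinction. The missing ingredient is precisely the vanishing of the off-diagonal Hom spaces that you prove in part (2): given two decompositions, apply part (2) to $\Id_\cF$, which then must be a tuple of isomorphisms $W_\sigma\to W'_\sigma$ respecting the $\sigma$-blocks, forcing the two families of subsheaves to coincide. So prove (2) first and deduce (1) from it, as the paper does; the order matters here.
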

\begin{proof} 
The second part of the lemma follows from the equality case in Theorem~\ref{thm-mor}. The first part of the lemma then follows from the second part applied to the identity morphism of $\cF$.
\end{proof}

The lemma implies that there is an equivalence of categories
\[ \fM^p \isom \prod_\sigma Vect_\RR.\]
In particular, $\fM^p$ is an abelian category.
Similarly, $\Gr \cF$ can be viewed as an object in the product category of graded vector spaces.

The filtration $\tau_{\leq p}$ on $\cF$ induces a filtration and grading on $\cF(\Sigma)$ and on $\overline{\cF(\Sigma)}$. In the quasi-convex case it also induces a filtration and grading on $\cF(\Sigma,\partial\Sigma)$ and on $\overline{\cF(\Sigma,\partial\Sigma)}$. As an example, when $\pi:\hat\Sigma\to\Sigma$ is a subdivision and $\cF=\pi_* \cL_{\hat\Sigma}(\codim \hat\zz)$, let 
\[ H = \overline{\cF(\Sigma)}. \]
Then $\Gr H$ has the double grading 
\[ H^i_p = \Gr_p \overline{\cF(\Sigma)}^i = \Gr_p IH^{i +\codim\zz}(\hat\Sigma).\]

\subsection{Relative Hard Lefschetz theorem.}

Let $\cF$ be in $\fM = \fM(\Sigma)$ and let $\hat{l}: \cF\to \cF(2)$ be a morphism. Then $\hat{l}$ induces a morphism on $\Gr \cF$ that we also call $\hat{l}$:
\[ \hat{l}: \Gr_p \cF \to \Gr_{p+2} \cF.\]

\begin{definition}
We say that $\hat{l}$ defines a \LO on $\Gr \cF$ if the morphism of sheaves
\[ \hat{l}^p: \Gr_{-p} \cF \to \Gr_{p} \cF \]
is an isomorphism for any $p>0$. 
\end{definition}

Let $\cF$ have a decomposition $\cF = \oplus_\sigma W_\sigma \otimes \cL^\sigma(\codim\sigma)$. Then
by Lemma~\ref{lem-Mp}, the morphism $\hat{l}: \Gr_p \cF \to \Gr_{p+2} \cF$ is given by linear maps 
\[ \hat{l}: W_\sigma^p \to W_\sigma^{p+2}.\]
Thus, $\hat{l}$ defines a \LO on $\Gr \cF$ if and only if 
\[ \hat{l}^p: W_\sigma^{-p} \to W_\sigma^{p}\]
is an isomorphism for any $p>0$ and $\sigma\in\Sigma$.

Let us specialize to the case of a subdivision $\pi:\hat\Sigma \to \Sigma$. Let $\hat{l}\in \cA^2(\hat\Sigma)$. Then $\hat{l}$ acts by multiplication on any sheaf $\cG\in\fM(\hat\Sigma)$, hence it acts on $\cF = \pi_*\cG$.

\begin{theorem}[Relative Hard Lefschetz] \label{thm-RHL1}
Let $\cG$ be in $\fM^0(\hat\Sigma)$.
Then a relatively strictly convex $\hat{l}\in\cA(\hat\Sigma)$ defines a \LO on $\Gr \pi_*\cG$.
\end{theorem}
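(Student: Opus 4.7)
The plan is to argue by induction on $n=\dim\hat\Sigma=\dim\Sigma$, taking as inductive hypothesis that Theorems~\ref{thm-RHL1}, \ref{thm-convex} and \ref{thm-complete} all hold for fans of dimension strictly less than $n$. Since $\cG\in\fM^0(\hat\Sigma)$ splits as a finite direct sum of sheaves of the form $\cL^{\hat\tau}(\codim\hat\tau)$, and both $\pi_*$ and $\tau_{\leq p}$ commute with finite direct sums, I may assume $\cG=\cL^{\hat\tau}(\codim\hat\tau)$ for a single cone $\hat\tau\in\hat\Sigma$. By the pullback argument of Section~\ref{sec-npfans} I may further reduce to the case where both $\Sigma$ and $\hat\Sigma$ are pointed.

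For each $\sigma\in\Sigma$ set $\hat\sigma=\pi^{-1}[\sigma]$, a pointed convex fan that subdivides the single cone $\sigma$. By Lemma~\ref{lem-prec-decomp}, the space $W_\sigma$ together with its $\hat{l}$-action is identified (up to a degree shift of $\codim\hat\tau-\codim\sigma$) with
\[ \img\Bigl(\overline{\cL^{\hat\tau}(\hat\sigma,\partial\hat\sigma)} \longrightarrow \overline{\cL^{\hat\tau}(\hat\sigma)}\Bigr), \]
which is zero unless $\hat\tau\in\hat\sigma$. In the nontrivial case, the restriction of $\hat{l}$ to $\hat\sigma$ is strictly convex by the assumption of relative strict convexity. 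Using the local product structure of $\cL^{\hat\tau}$ on $\st\hat\tau$, together with the K\"unneth-type statements of Section~\ref{sec-products} (after subdividing $\hat\sigma$ outside $\st\hat\tau$ if necessary to arrange a local product at $\hat\tau$), the above image is rewritten, up to tensoring with a fixed factor coming from $\overline{\cL^{\hat\tau}_{\hat\tau}}$, as the image $W_{\Phi_\sigma}$ appearing in Theorem~\ref{thm-convex} for the convex fan
\[ \Phi_\sigma=\overline{\st_{\hat\sigma}\hat\tau}\subset V/\Span\hat\tau, \]
equipped with the pushed-down strictly convex function induced by $\hat{l}$.

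The Lefschetz claim on $W_\sigma$ then follows from Theorem~\ref{thm-convex} applied to $\Phi_\sigma$. When $\hat\tau\neq\hat\zz$, or when $\sigma$ is not a full-dimensional cone of $\Sigma$, one has $\dim\Phi_\sigma<n$ and the inductive hypothesis applies directly. The remaining case is $\hat\tau=\hat\zz$ with $\sigma$ of full dimension, so that $\Phi_\sigma=\hat\sigma$ has dimension $n$; here I invoke the reduction sketched in the introduction, which derives Theorem~\ref{thm-convex} for such a convex fan from Theorem~\ref{thm-complete} on the complete fan of dimension $n-1$ obtained by projecting $\partial\hat\sigma$ from an interior ray, and Theorem~\ref{thm-complete} in dimension $n-1$ in turn from RHL/RHR in dimension $n-1$, both supplied by the induction.

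The main obstacle is verifying that Lemma~\ref{lem-prec-decomp} really identifies $W_\sigma$ \emph{as a graded module under $\hat{l}$} with the stated cohomological image, and, in the parallel RHR argument carried jointly with the present one, that it transports the canonical \Po pairing on $W_\sigma$ to the pairing on $\Phi_\sigma$ up to a positive scalar. This compatibility is what all of Section~3 is designed to provide: the duality functor $\tilde\DD$ is compatible with $\pi_*$ by Theorem~\ref{thm-compat2}, and the passage from $\hat\sigma$ to $\overline{\st_{\hat\sigma}\hat\tau}$ is controlled by Lemma~\ref{lem-loc-mult} comparing the pairings on $[\st\hat\tau]$ and $\overline{\st\hat\tau}$. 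Once these naturality statements are in place, the joint induction closes.
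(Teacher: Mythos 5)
Your overall architecture is the same as the paper's: reduce $\cG$ to an indecomposable $\cL^{\hat\tau}(\codim\hat\tau)$, reduce to pointed fans, identify $W_\sigma$ (with its $\hat{l}$-action and pairing) with the local cohomological image $W_\Phi$ of a convex fan $\Phi$, and then invoke Theorem~\ref{thm-convex} on $\Phi$, closing the induction through Theorem~\ref{thm-complete} in one dimension lower. This is precisely the chain Lemma~\ref{lem-RHL-red} $\to$ Lemma~\ref{lem-phi-red} $\to$ Lemma~\ref{lem-convex-red} $\to$ Section~5 in the paper, so your strategy is sound and not essentially different from the paper's.

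However, one step as written would fail. You want to pass from $\hat\sigma$ with sheaf $\cL^{\hat\tau}$ to the pointed fan $\Phi_\sigma=\overline{\st_{\hat\sigma}\hat\tau}$ by ``using the local product structure of $\cL^{\hat\tau}$ on $\st\hat\tau$ $\ldots$ (after subdividing $\hat\sigma$ outside $\st\hat\tau$ if necessary to arrange a local product at $\hat\tau$).'' Recall that a local product structure at $\hat\tau$ is a condition on the cones $\nu\in\st\hat\tau$ themselves (each must factor as $\hat\tau+\nu'$ with $\Span\nu'\cap\Span\hat\tau=0$); subdividing $\hat\sigma$ away from $\st\hat\tau$ leaves all cones of $\st\hat\tau$ untouched and so cannot create a local product, while subdividing inside $\st\hat\tau$ would change $\cL^{\hat\tau}$ and $W_\sigma$ and is not permitted. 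So the parenthetical escape hatch does not work, and the factorization you invoke genuinely can fail (e.g.\ $\hat\tau$ a ray in a nonsimplicial cone $\nu$).

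Fortunately the appeal to a fan-level local product is unnecessary. What the paper uses instead (Section~\ref{sec-npfans} and the proof of Lemma~\ref{lem-RHL-red}) is the unconditional isomorphism of ringed spaces
$\st_{\hat\sigma}\hat\tau \;\cong\; \Sigma_{\hat\tau} \;\cong\; \overline{\st_{\hat\sigma}\hat\tau}\times\RR^{\dim\hat\tau}$,
where $\Sigma_{\hat\tau}=\{\nu+\Span\hat\tau\,:\,\nu\in\st_{\hat\sigma}\hat\tau\}$ is the non-pointed fan obtained by saturating the star by $\Span\hat\tau$; under this isomorphism $\cL^{\hat\tau}$ on $\st\hat\tau$ corresponds to $\cL_{\Sigma_{\hat\tau}}$, and the functor $\otimes_A B$ / $\pi^*$ of Section~\ref{sec-npfans} identifies the relevant $IH$ spaces and (via Lemma~\ref{lem-compat1}) the pairings. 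Replace the local-product / K\"unneth step by this ringed-space isomorphism, and the rest of your argument --- the dimension count $\dim\Phi_\sigma=\dim\sigma-\dim\hat\tau<n$ except when $\hat\tau=\hat\zz$ and $\sigma$ is maximal, and the appeal in that remaining case to Step~1 (Theorem~\ref{thm-complete} in dimension $n-1$) --- goes through, and matches the paper.
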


Note that Theorem~\ref{thm-RHL} in the introduction is a special case of Theorem~\ref{thm-RHL1}, where $\hat\Sigma$ is pointed and $\cG = \cL_{\hat\Sigma}(\codim\hat\zz)$. (The grading is such that $W_\sigma^{\dim\sigma-i}$ in Theorem~\ref{thm-RHL} is equal to $W_\sigma^{-i}$ in Theorem~\ref{thm-RHL1}.) We show that the two theorems are in fact equivalent.

\begin{lemma} \label{lem-RHL-red} Theorem~\ref{thm-RHL} for all subdivisions $\pi: \hat\Sigma \to \Sigma$, where $\hat\Sigma$ is a pointed fan of dimension at most $n$, implies Theorem~\ref{thm-RHL1} for all subdivisions $\pi: \hat\Sigma \to \Sigma$ of $n$-dimensional fans and all sheaves $\cG$ in $\fM^0(\hat\Sigma)$.
\end{lemma}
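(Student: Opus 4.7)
The plan is to reduce in two steps.

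First, any $\cG\in\fM^0(\hat\Sigma)$ decomposes canonically as $\cG = \bigoplus_{\tau\in\hat\Sigma} V_\tau\tensor \cL^\tau_{\hat\Sigma}(\codim\tau)$ for some vector spaces $V_\tau$. Since $\pi_*$ is additive, the filtration $\tau_{\leq p}$ is compatible with finite direct sums, and the Lefschetz isomorphism claimed in Theorem~\ref{thm-RHL1} decomposes into the analogous claim for each summand, it is enough to treat the case $\cG = \cL^\tau_{\hat\Sigma}(\codim\tau)$ for a single cone $\tau\in\hat\Sigma$.

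Second, I would collapse $\tau$ to the origin. Let $q\colon V\to V/\Span\tau$ be the quotient, set $\tau_0 = \pi(\tau)\in\Sigma$, and define fans in $V/\Span\tau$ by
\[ \hat\Sigma' = q(\st_{\hat\Sigma}\tau), \qquad \Sigma' = q(\st_\Sigma\tau_0). \]
Then $\hat\Sigma'$ is \emph{pointed} with minimal cone $\{0\}$ and of dimension at most $n-\dim\tau \leq n$, while $\Sigma'$ has minimal cone $q(\tau_0)$, possibly non-pointed (which Theorem~\ref{thm-RHL} explicitly allows). The induced map $\pi'\colon \hat\Sigma'\to\Sigma'$ is a subdivision: the nontrivial inclusion $q(|\st_\Sigma\tau_0|)\subseteq q(|\st_{\hat\Sigma}\tau|)$ follows by fixing $w$ in the relative interior of $\tau$ and observing that for any $v\in\sigma_0\in\st_\Sigma\tau_0$ the shifted point $v+tw$ lies, for $t$ sufficiently large, in a cone of $\hat\Sigma$ containing $\tau$, while $q(v+tw)=q(v)$. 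Under the identification of $\st_{\hat\Sigma}\tau$ with $\hat\Sigma'$ as ringed spaces (where each $\cA_\sigma$ is a polynomial extension of $\cA_{q(\sigma)}$ by $\Sym \Span\tau^*$, since $\tau$ is a face of $\sigma$ and hence $\sigma\cap\Span\tau=\tau$), the sheaf $\cL^\tau_{\hat\Sigma}$ corresponds to $\cL_{\hat\Sigma'}$ after this base change; the grading shift $\codim\tau$ agrees with $\codim_{V/\Span\tau}\hat\zz'$; the multiplicity spaces $W_\sigma$ for $\sigma \geq \tau_0$ match the $W_{q(\sigma)}$ of the subdivision $\pi'$; and a relatively strictly convex $\hat{l}\in\cA^2(\hat\Sigma)$, adjusted by a global linear function so that it vanishes on $\Span\tau$, is pulled back from a relatively strictly convex $\hat{l}'\in\cA^2(\hat\Sigma')$.

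Applying Theorem~\ref{thm-RHL} to $\pi'\colon \hat\Sigma'\to\Sigma'$ with $\cG' = \cL_{\hat\Sigma'}$ and function $\hat{l}'$ then yields the Lefschetz property on each multiplicity space of $\pi'_*\cL_{\hat\Sigma'}$, which transports back, via the identifications just described, to the required property for $\pi_*\cG$. The principal obstacle is the verification in the second step: that the base-change correspondence between $\cL^\tau_{\hat\Sigma}$ and $\cL_{\hat\Sigma'}$ is compatible with the multiplicity spaces, with the grading shift, and with the action of $\hat{l}$. These verifications are essentially local near $\tau$ and should follow from the minimal projective cover characterization of $\cL^\tau$ together with the base-change compatibility of $\pi_*$ already established for products with $\RR^m$ in Section~\ref{sec-npfans}.
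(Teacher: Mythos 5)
Your proposal is correct and follows essentially the same strategy as the paper: decompose $\cG$ to reduce to $\cG = \cL^\tau(\codim\tau)$, then pass to a subdivision of fans where the source is pointed via the projection from $\Span\tau$. The only cosmetic difference is that the paper first constructs the non-pointed fans $\Sigma_\tau = \{\sigma + \Span\tau\}$ and $\Sigma_{\pi(\tau)}$ inside $V$ and then splits off the $\RR^{\dim\tau}$ factor, whereas you go directly to the quotient $V/\Span\tau$; these are identified by any splitting of $V$, so the two accounts are the same argument.
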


\begin{proof} 
Let us first reduce to the case $\cG = \cL_{\hat\Sigma}(\codim\hat\zz)$. By the decomposition theorem on $\hat\Sigma$ it suffices to consider sheaves $\cG = \cL^\tau(\codim\tau)$ for $\tau\in \hat\Sigma$. Construct the fan $\Sigma_\tau$ consisting of cones $\sigma+\Span(\tau)$, for $\sigma\in\st\tau$, and a similar fan $\Sigma_{\pi(\tau)}$. Then $\Sigma_\tau$ is a subdivision of $\Sigma_{\pi(\tau)}$. Theorem~\ref{thm-RHL1} applied to this subdivision and the sheaf $\cG = \cL_{\Sigma_\tau}(\codim\tau)$ is equivalent to the same theorem applied to the subdivision $\pi$ and the sheaf $\cG= \cL^\tau(\codim\tau)$.

To reduce to the case where $\hat\Sigma$ is pointed, we may split the fans and write $\pi$ as 
\[ \hat\Sigma' \times \RR^m \stackrel{\pi'\times \Id}{\longrightarrow} \Sigma' \times \RR^m
,\]
where $\pi':  \hat\Sigma' \to \Sigma'$ is a subdivision and $\hat\Sigma'$ is pointed. As in Section~\ref{sec-npfans}, the operation $\otimes_A B$ maps $\fM(\Sigma')$ to $\fM(\Sigma)$ and $\fM(\hat\Sigma')$ to $\fM(\hat\Sigma)$. If Theorem~\ref{thm-RHL1} holds for the subdivision $\pi'$ and $\hat{l}'\in \cA^2(\hat\Sigma')$, then it also holds for the subdivison $\pi$ and the pullback of $\hat{l}'$. We may change $\hat{l}\in\cA^2(\hat\Sigma)$ by a global linear function so that it becomes equal to the  pullback of a relatively strictly convex function $\hat{l}'$. Then the pullback satisfies RHL if and only if $\hat{l}$ does.  
\end{proof}

\subsection{From relative to global Hard Lefschetz.} 

Consider a subdivision $\pi:\hat\Sigma\to\Sigma$ of complete fans. If $l$ is strictly convex on $\Sigma$ and $\hat{l}$ is relatively strictly convex for $\pi$, then $l+\varepsilon^2 \hat{l}$ is strictly convex on $\hat\Sigma$ for small $\varepsilon\neq 0$, and hence satisfies the Hard Lefschetz theorem on $\hat\Sigma$. We wish to prove without using convexity that if $l$ satisfies HL on $\Sigma$ (as in Theorem~\ref{thm-HL}) and $\hat{l}$ satisfies RHL, then $l+\varepsilon^2 \hat{l}$  satisfies HL on $\hat\Sigma$ for small $\varepsilon\neq 0$. 

Let us fix the notation that will be used throughout this subsection. Let $\pi:\hat\Sigma \to \Sigma$ be a subdivision, with both fans complete. Let $\cF = \pi_* \cG$ for some $\cG$ in $\fM^0(\hat\Sigma)$. Let $l\in\cA^2(\Sigma)$ and $\hat{l} \in \cA^2(\hat\Sigma)$ be such that
\begin{itemize}
  \item Multiplication with $l$ satisfies HL on $IH(\st\sigma)$ for any $\sigma\in\Sigma$ as in Theorem~\ref{thm-HL}.
  \item Multiplication with $\hat{l}$ defines a \LO on $\Gr \cF$. 
\end{itemize}

Let us decompose $\cF =  \bigoplus_\sigma W_\sigma \otimes \cL^\sigma(\codim\sigma).$ To simplify notation, define $H^* = \overline{\cF(\Sigma)}^*$ and the graded space $H^*_*$, with
\[ H^i_p = \Gr_p H^i=  \oplus_\sigma W^p_\sigma\otimes IH^{i-p +\codim\sigma}(\st\sigma).\]
Then $l$ acts on both $H^*$ and $H^*_*$ by acting on each $IH(\st\sigma)$. On the other hand, $\hat{l}$ acts on $H^*_*$ by a degree $2$ map on $W_\sigma$ for each $\sigma\in\Sigma$. This gives the map $l+\hat{l} = 1\otimes l + \hat{l}\otimes 1 : H^i_*\to H^{i+2}_*$.

\begin{lemma}
The map $l+\hat{l}$ defines a \LO on $H^*_*$:
\[ (l+\hat{l})^i: H^{-i}_* \to H^{i}_*\]
is an isomorphism for any $i>0$.
\end{lemma}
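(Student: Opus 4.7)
The plan is to invoke the classical $\mathfrak{sl}_2$-theoretic tensor-product principle for Lefschetz operators, applied summand by summand in the chosen decomposition of $\cF$. Fix $\cF = \bigoplus_\sigma W_\sigma \otimes \cL^\sigma(\codim\sigma)$, which induces
\[ H^*_* = \bigoplus_\sigma W_\sigma \otimes IH(\st\sigma),\qquad H^i_p = \bigoplus_\sigma W_\sigma^p \otimes IH^{i-p+\codim\sigma}(\st\sigma). \]
Under this identification, $l$ acts as $1\otimes l$ on each summand, since its action on $\cF(\Sigma)$ is through the $A$-module structure internal to $\cL^\sigma(\Sigma)$ and does not affect $W_\sigma$. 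The morphism $\hat l\colon\cF\to\cF(2)$ is $\cA$-linear, so it commutes with multiplication by $l$; moreover, by Lemma~\ref{lem-Mp}, its induced endomorphism of $\Gr\cF$ is a collection of linear maps $W_\sigma^p\to W_\sigma^{p+2}$, so on each summand of $H^*_*$ it acts as $\hat l\otimes 1$. In particular $l$ and $\hat l$ commute on $H^*_*$.

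Now fix $\sigma$ and set $c=\codim\sigma$. The hypothesis on $l$ says exactly that $1\otimes l$ is a Lefschetz operator on $IH(\st\sigma)$ centered at degree $c$, while the hypothesis on $\hat l$ says that $\hat l\otimes 1$ is a Lefschetz operator on $W_\sigma$ centered at degree $0$. The problem therefore reduces to the following standard fact: if $L_V$ and $L_W$ are commuting Lefschetz operators on graded vector spaces $V$ and $W$, centered at degrees $a$ and $b$ respectively, then $L_V\otimes 1+1\otimes L_W$ is a Lefschetz operator on $V\otimes W$ centered at degree $a+b$. This is proved by using Jacobson--Morozov to lift each Lefschetz operator to an $\mathfrak{sl}_2$-triple, taking the diagonal $\mathfrak{sl}_2$-action on $V\otimes W$, and invoking complete reducibility of finite-dimensional $\mathfrak{sl}_2$-modules to see that the raising operator is an isomorphism between opposite weight spaces.

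Applied with $a=0$ and $b=c$, the principle yields an $\mathfrak{sl}_2$-action on $W_\sigma\otimes IH(\st\sigma)$ whose raising operator is $\hat l+l$ and whose weight on $W_\sigma^p\otimes IH^q(\st\sigma)$ is $p+q-c$. Since $i=p+q-c$ is exactly the degree of the $\sigma$-summand in $H^i_*$, the weight $\pm k$ subspaces are precisely the $\sigma$-summands of $H^{\pm k}_*$, and $(\hat l+l)^k$ is an isomorphism between them. Summing over $\sigma\in\Sigma$ gives the claimed isomorphism $(l+\hat l)^k\colon H^{-k}_*\to H^k_*$.

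No genuine obstacle is anticipated: the entire argument is formal once one recognizes the two hypotheses on $l$ and $\hat l$ as Lefschetz conditions on the two tensor factors and checks that the two operators really act on disjoint factors of each summand, which is what Lemma~\ref{lem-Mp} and the $\cA$-linearity of morphisms in $\fM$ guarantee.
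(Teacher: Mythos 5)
Your proof is correct and takes essentially the same approach as the paper, which simply observes that if $\hat l$ is a Lefschetz operator on $W_\sigma$ and $l$ a Lefschetz operator on $IH(\st\sigma)$, then $1\otimes l+\hat l\otimes 1$ is a Lefschetz operator on the tensor product. You have merely spelled out the standard $\mathfrak{sl}_2$ tensor-product argument and the degree bookkeeping that the paper leaves implicit.
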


\begin{proof}
If $\hat{l}$ defines a \LO on $W_\sigma$ and $l$ defines a \LO on $IH(\st\sigma)$, then $1\otimes l + \hat{l}\otimes l$ defines a \LO on their tensor product.
\end{proof}

\begin{theorem} \label{thm-rel-global-HL}
 With notation as above, multiplication with $l+\varepsilon^2\hat{l}$ defines a \LO on $H^*$ for small $\varepsilon\neq 0$:
 \[ (l+\varepsilon^2\hat{l})^i: H^{-i} \to H^{i}\]
 is an isomorphism for any $i>0$.
\end{theorem}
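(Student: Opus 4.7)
The plan is to exhibit $l+\varepsilon^2\hat{l}$ on $H^*$ as a conjugate, via a diagonal rescaling, of a small perturbation of the Lefschetz operator $l+\hat{l}$ on $H^*_*$ supplied by the preceding lemma, and then invoke openness of the isomorphism locus. First I fix a decomposition $\cF=\bigoplus_\sigma W_\sigma\otimes \cL^\sigma(\codim\sigma)$; applying $\overline{(\cdot)(\Sigma)}$ gives a splitting $H=\bigoplus_p H_p$ of the filtration by $\tau_{\leq p}$, identifying $H$ with the bigraded space $H^*_*$.

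Under this splitting, $l$ acts diagonally: being $\cA_\Sigma$-linear, it preserves each summand $W_\sigma\otimes IH(\st\sigma)(\codim\sigma)$ and hence the $p$-grading. The operator $\hat{l}$ need not preserve the decomposition, but Theorem~\ref{thm-mor}, applied to each component
\[ W_\sigma^p \otimes \cL^\sigma(\codim\sigma)\to W_\tau^q \otimes \cL^\tau(\codim\tau) \]
of the degree-$2$ morphism $\hat{l}$, forces $q\leq p+2$, with equality only when $\sigma=\tau$. Thus I can write $\hat{l}=\hat{l}^{(2)}+\hat{l}^{(\leq 1)}$, where $\hat{l}^{(2)}\colon H_p\to H_{p+2}$ is precisely the induced action of $\hat{l}$ on $\Gr\cF$ (and so, under the splitting, coincides with the action on $H^*_*$ used in the preceding lemma), while $\hat{l}^{(\leq 1)}$ is a sum of components shifting $p$ by at most $1$.

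Next I introduce, for $\varepsilon\neq 0$, the automorphism $\rho_\varepsilon$ of $H$ that acts by $\varepsilon^{-p}$ on $H_p$. Since $l$ preserves $p$, we have $\rho_\varepsilon l\rho_\varepsilon^{-1}=l$, and a component shifting $p$ by $k$ is conjugated to $\varepsilon^{-k}$ times itself, yielding
\[ \rho_\varepsilon(l+\varepsilon^2\hat{l})\rho_\varepsilon^{-1} = l + \hat{l}^{(2)} + \varepsilon\,R(\varepsilon), \]
where $R(\varepsilon)$ is a polynomial in $\varepsilon$ built from $\hat{l}^{(\leq 1)}$ (all exponents $1-k$ with $k\leq 1$ are non-negative). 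By the preceding lemma, $l+\hat{l}^{(2)}$ is a Lefschetz operator on $H\isom H^*_*$, so $(l+\hat{l}^{(2)})^i\colon H^{-i}\isomto H^i$. Since $\varepsilon\mapsto (l+\hat{l}^{(2)}+\varepsilon R(\varepsilon))^i$ is a polynomial map with invertible value at $\varepsilon=0$, and the isomorphism locus is open in $\Hom(H^{-i},H^i)$, the $i$-th power remains an isomorphism for all sufficiently small $\varepsilon$. Conjugating back by $\rho_\varepsilon^{-1}$, which is an automorphism whenever $\varepsilon\neq 0$, shows that $(l+\varepsilon^2\hat{l})^i\colon H^{-i}\to H^i$ is an isomorphism for small $\varepsilon\neq 0$.

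The main obstacle is the second step: assembling the $p$-graded decomposition of $\hat{l}$ from Theorem~\ref{thm-mor} and checking that the top-shift piece $\hat{l}^{(2)}$ is indeed the operator appearing in the preceding lemma. Once this bookkeeping is in place, the rescaling calculation and the openness argument are essentially formal, and the choice of exponent $\varepsilon^2$ in $l+\varepsilon^2\hat{l}$ is exactly what makes the leading piece of the conjugated operator survive the $\varepsilon\to 0$ limit.
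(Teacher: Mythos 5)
Your proposal is correct and follows essentially the same path as the paper's proof: fix a splitting $\cF\isom\Gr\cF$, conjugate $l+\varepsilon^2\hat l$ by the diagonal rescaling on the $p$-grading (your $\rho_\varepsilon$ is the inverse of the paper's $\alpha$, so the two conjugations coincide), observe that the limit as $\varepsilon\to 0$ is the associated graded operator $l+\hat l^{(2)}$ on $H^*_*$, and conclude by openness of the Lefschetz condition. The only cosmetic difference is that you spell out via Theorem~\ref{thm-mor} why $\hat l$ can only raise the filtration index by at most $2$, which the paper takes for granted from Section~\ref{sec-tau}.
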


\begin{proof}
The idea is to degenerate the map $l+\varepsilon^2\hat{l}$ on $H^*$ to the map $l+\hat{l}$ on $H^*_*$.

We fix a decomposition of $\cF$ as above, giving an isomorphism $\cF\isom \Gr \cF$ and $H^* \isom H^*_*$. Consider the action of $\hat{l}$ on $H^*\isom H^*_*$. If $h\in H^*_p$, then 
\[ \hat{l} (h) = g_{p+2} + g_{p+1} + g_{p}+\ldots,\]
for some $g_j\in H^*_j$.

Let us change coordinates in $H^*$ by the isomorphism $\alpha: H^*\to H^*$ that multiplies elements in $H^*_p$ with $\varepsilon^p$. In the new coordinates consider the map $\hat{l}_\varepsilon = \alpha^{-1} \circ \varepsilon^2 \hat{l} \circ \alpha$ on $H^*$. If $h\in H^*_p$ is as above, then 
\begin{align*}
 \hat{l}_\varepsilon (h) &= \alpha^{-1} \circ \varepsilon^2 \hat{l} (\varepsilon^p h) \\
 &= \alpha^{-1} \varepsilon^{p+2}(g_{p+2} + g_{p+1} + g_{p}+\ldots) \\
 &= g_{p+2} + \varepsilon g_{p+1} + \varepsilon^2 g_{p} +\ldots.
 \end{align*}
 It follows that the map $\hat{l}_\varepsilon$ on  $H^*$ has a limit as $\varepsilon\to 0$, which is the graded map $\hat{l}$ on $H^*_*$. Since $l+\hat{l}_0$ defines a \LO on $H^*_*$ by the previous lemma, so does $l+\hat{l}_\varepsilon$ on $H^*$ for small $\varepsilon\neq 0$. However, $l+\hat{l}_\varepsilon$ is isomorphic to the map $l+\varepsilon^2\hat{l}$ via the change of coordinates map $\alpha$.
\end{proof}

Consider now the action of $\hat{l}$ on $l\cdot H^*$. We replace $l\cdot H^*$ with its shift by one degree $l\cdot H^*(1)$ which is symmetric around degree $0$.

\begin{theorem}\label{thm-complete-HL}
 With notation as above, let $l\cdot H^*$ be the image of $l:H^*\to H^*$. Then multiplication with $l+\varepsilon^2\hat{l}$ defines a \LO on $l\cdot H^*(1)$ for small $\varepsilon\neq 0$:
 \[ (l+\varepsilon^2\hat{l})^{i-1}: l\cdot H^{-i} \to l\cdot H^{i-2}\]
 is an isomorphism for any $i>1$.
\end{theorem}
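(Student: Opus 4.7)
The plan is to run the degeneration argument from Theorem~\ref{thm-rel-global-HL} on the subspace $l\cdot H^*\subset H^*$. First I would fix a decomposition $\cF=\bigoplus_\sigma W_\sigma\otimes\cL^\sigma(\codim\sigma)$, identifying $H^*$ with $H^*_*=\bigoplus_\sigma W_\sigma\otimes IH(\st\sigma)$ as a graded vector space. Under this identification, $l$ acts by multiplication on each factor $IH(\st\sigma)$ and in particular preserves the $p$-filtration strictly (it acts trivially on the index $p$). The operator $\hat{l}\colon\cF\to\cF(2)$ respects the filtration, so it splits as $\hat{l}=\sum_{\Delta\geq 0}\hat{l}_{(\Delta)}$, where $\hat{l}_{(\Delta)}$ shifts the $p$-index by $2-\Delta$; the piece $\hat{l}_{(0)}=\hat{l}_{\mathrm{gr}}$ acts on the $W_\sigma$'s and is the graded induced map.

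Next, mimicking the proof of Theorem~\ref{thm-rel-global-HL}, I would use the automorphism $\alpha\colon H^*\to H^*$ that multiplies $H^*_p$ by $\varepsilon^p$. Since $\alpha$ commutes with $l$, the subspace $l\cdot H^*$ is $\alpha$-invariant; conjugating,
\[\alpha^{-1}\circ(l+\varepsilon^2\hat{l})\circ\alpha \;=\; l+\sum_{\Delta\geq 0}\varepsilon^\Delta \hat{l}_{(\Delta)},\]
which on $l\cdot H^*$ converges, as $\varepsilon\to 0$, to $l+\hat{l}_{\mathrm{gr}}$ acting on $l\cdot H^*_*=\bigoplus_\sigma W_\sigma\otimes(l\cdot IH(\st\sigma))$. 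Since the Lefschetz property is an open condition on the operator, it is enough to establish it in the limit.

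The limit statement reduces, by the K\"unneth-type argument from Section~\ref{sec-products}, to two claims on each summand: $\hat{l}_{\mathrm{gr}}$ is Lefschetz on $W_\sigma$, which is Theorem~\ref{thm-RHL1}; and $l$ is Lefschetz on $l\cdot IH(\st\sigma)$. The second, which is the main new input, follows from the Hard Lefschetz theorem on $IH(\st\sigma)$ (Theorem~\ref{thm-HL}) via the primitive decomposition. Writing $IH(\st\sigma)=\bigoplus_{k\geq 0}P_k\otimes\RR\{1,l,\ldots,l^k\}$ with $P_k=\prim_l IH^{\codim\sigma-k}(\st\sigma)$, one has
\[l\cdot IH(\st\sigma)=\bigoplus_{k\geq 1}(lP_k)\otimes\RR\{1,l,\ldots,l^{k-1}\},\]
and each block is a Lefschetz chain of length $k$ centered at degree $\codim\sigma+1$ with $l$ as its raising operator.

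The main obstacle is precisely this last claim: one must check that passing to the image $l\cdot IH(\st\sigma)$ removes the bottom primitives $P_k$ (those in degree $\codim\sigma-k$, killed off by taking the image one step up) without breaking the Lefschetz structure, and that the resulting symmetry is centered one degree higher. Once this is in hand, K\"unneth yields Lefschetz of $l+\hat{l}_{\mathrm{gr}}$ on each tensor summand centered at $\codim\sigma+1$; summing over $\sigma$, together with the shifts built into $\cL^\sigma(\codim\sigma)$, produces a Lefschetz structure on $l\cdot H^*_*$ with the center matching the shift $(1)$ in the statement, and the openness together with the degeneration transfers this to $l+\varepsilon^2\hat{l}$ on $l\cdot H^*$ for small $\varepsilon\neq 0$.
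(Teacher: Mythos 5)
Your proposal is correct and follows essentially the same route as the paper, which reruns the degeneration argument of Theorem~\ref{thm-rel-global-HL} after first checking that $l+\hat{l}$ is Lefschetz on $l\cdot H^*_*(1)$. The only things you supply that the paper leaves implicit are the observation that $\alpha$ commutes with $l$ (so $l\cdot H^*$ is $\alpha$-invariant and the limiting argument makes sense on the subspace) and the primitive-decomposition verification that $l$ is Lefschetz on $l\cdot IH(\st\sigma)$, which the paper simply asserts.
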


\begin{proof}
We first check that $l+\hat{l}$ defines a \LO on the graded space $l\cdot H^*_*(1)$. If $l$ defines a \LO on $IH^{*+\codim\sigma}(\st\sigma)$, then $l$ also defines a \LO on $l\cdot IH^{*+\codim\sigma-1}(\st\sigma)$. This implies that $l+\hat{l}$ defines a \LO on 
\[ l\cdot\big(W_\sigma\otimes IH^{*+\codim\sigma}(\st\sigma)\big)(1) = W_\sigma\otimes l\cdot IH^{*+\codim\sigma-1}(\st\sigma).\]
Now we repeat the proof of the previous theorem with $H^*$ replaced by $l\cdot H^{*}(1)$.
\end{proof}

\begin{remark} \label{rmk-gen-RHL}
Theorem~\ref{thm-RHL1} was stated for sheaves $\cG$ in $\fM^0(\hat\Sigma)$. It can be generalized to all sheaves in $\fM(\hat\Sigma)$ as follows. 

Let $\cG$ be in $\fM(\hat\Sigma)$ and let $\tilde{l}: \cG \to \cG(2)$ be a morphism that defines a Lefschetz operation on $\Gr \cG$. Let $\cF = \pi_* \cG$. Then $\cF$ can be given a triple grading
\[ \cF_{p,q}^i = \Gr_{p} \pi_* \Gr_{q} \cG^i.\]
If $F= \oplus_\sigma W_\sigma\otimes \cL^\sigma(\codim\sigma)$, then each $W_\sigma$ gets a double grading $W^{p,q}_\sigma$. The morphism $\tilde{l}$ acts on $W_\sigma$ by increasing the grading by $q$ and the multiplication with $\hat{l}\in\cA(\hat\Sigma)$ acts by increasing the grading by $p$. 

The generalization of Theorem~\ref{thm-RHL1} now states that $\tilde{l}+\hat{l}$ defines a \LO on $W_\sigma$:
\[ ( \tilde{l}+\hat{l} )^i: \bigoplus_{p+q=-i} W_\sigma^{p,q} \to \bigoplus_{p+q=i} W_\sigma^{p,q}\]
is an isomorphism for any $i>0$ and $\sigma\in\Sigma$. This claim can be reduced to Theorem~\ref{thm-RHL1} by first using a Lefschetz decomposition on $\cG$ and thus reducing the problem to the case where $W_\sigma$ is a tensor product of an $\RR[\tilde{l}]$-module and an $\RR[\hat{l}]$-module.

The morphism $\hat{l}+\varepsilon^2\tilde{l}$ acts on $\cF$, and hence also on $\Gr \cF$. We claim that it define a \LO on $\Gr\cF$ for small $\varepsilon\neq 0$. This can be proved the same way as Theorem~\ref{thm-rel-global-HL}, replacing $H$ in the proof with $W_\sigma$ and degenerating the action of $\hat{l}+\varepsilon^2\tilde{l}$ to the action of $\hat{l}+\tilde{l}$ on $W_\sigma^{*,*}$.

This generalization of the RHL theorem is useful when studying compositions of subdivisions. The resulting double filtration on $W_\sigma$ was considered in \cite{KatzStapledon}.

\end{remark}

\subsection{Intersection pairing} 

Let $\Sigma$ be a fan in $V$. 

\begin{lemma}
The duality functor $\tilde \DD: \fM\to \fM$ maps $\fM^{\leq p}$ to $\fM^{\geq -p}$.
\end{lemma}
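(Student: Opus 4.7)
The plan is straightforward and rests entirely on the explicit computation of $\tilde{\DD}$ on the indecomposable sheaves $\cL^\tau$, which is already supplied by Lemma~\ref{lem-dualL}.

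First I would observe that $\tilde{\DD}$ is a contravariant additive functor, so it sends finite direct sums to finite direct sums, and for any integer $j$ it satisfies the standard shift rule $\tilde{\DD}(\cF(j)) \isom \tilde{\DD}(\cF)(-j)$ (since shifting the grading of the source by $j$ reverses to a shift by $-j$ on $\Homd$).

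Next, given $\cF \in \fM^{\leq p}$, I would apply the decomposition theorem together with the very definition of $\fM^{\leq p}$ to write
\[ \cF \isom \bigoplus_{k} \cL^{\sigma_k}(\codim\sigma_k - i_k), \qquad i_k \leq p. \]
Applying $\tilde{\DD}$ termwise and using Lemma~\ref{lem-dualL} (which gives $\tilde{\DD}(\cL^\sigma) \isom \cL^\sigma(2\codim\sigma)$), together with the shift rule above, yields
\[ \tilde{\DD}(\cF) \isom \bigoplus_{k} \cL^{\sigma_k}(2\codim\sigma_k)(-\codim\sigma_k + i_k)
   = \bigoplus_{k} \cL^{\sigma_k}(\codim\sigma_k - (-i_k)). \]
Since $-i_k \geq -p$ for every $k$, this expresses $\tilde{\DD}(\cF)$ as a finite direct sum of sheaves of the form $\cL^{\sigma}(\codim\sigma - j)$ with $j \geq -p$, which is precisely the definition of $\fM^{\geq -p}$.

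There is no real obstacle here: the entire content of the lemma is packaged in Lemma~\ref{lem-dualL}, and the rest is bookkeeping with grading shifts. The only minor point to verify cleanly is the sign convention $\tilde{\DD}(\cF(j)) \isom \tilde{\DD}(\cF)(-j)$, which follows immediately from $\tilde{\DD}(F) = R\cHomd(F,\tilde{D}_\Sigma)$ and the definition of the grading shift.
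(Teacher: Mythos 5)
Your proof is correct and takes the same approach as the paper: both reduce to the computation $\tilde{\DD}(\cL^\sigma(\codim\sigma - j)) \isom \cL^\sigma(\codim\sigma + j)$, which follows from Lemma~\ref{lem-dualL} together with the shift rule $\tilde{\DD}(\cF(j)) \isom \tilde{\DD}(\cF)(-j)$. You spell out the bookkeeping and the decomposition into indecomposables slightly more explicitly than the paper's terse two-line proof, but the content is identical.
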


\begin{proof}
From Lemma~\ref{lem-dualL}
\[ \tilde{\DD}(\cL^\sigma(\codim\sigma - j)) \isom \cL^\sigma (\codim\sigma + j).\]
Thus, if $j\leq p$, then $-j \geq -p$.
\end{proof}

\begin{lemma}
Assume that $\Sigma$ is quasi-convex. Let $\cF$ be in $\fM$ and consider the $A$-bilinear pairing
\[ \cF(\Sigma)\times\cF(\Sigma,\partial\Sigma) \to A,\]
given by a morphism
\[ \phi: \cF\to \tilde\DD(\cF) .\]
Then the restriction of the pairing to 
\[ \tau_{\leq p} \cF(\Sigma) \times \tau_{\leq -p-1} \cF(\Sigma,\partial\Sigma) \to A\]
is zero for every $p\in\ZZ$.
\end{lemma}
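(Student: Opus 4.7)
My plan is to re-express the restricted pairing as a single morphism in $\fM$ whose source lies in $\fM^{\leq p}$ and whose target lies in $\fM^{\geq p+1}$, and then to invoke Theorem~\ref{thm-mor} to conclude that it must vanish.

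First I would unpack the definition. By construction the pairing has the form $(f,g)\mapsto \langle \phi(f),g\rangle_\cF$, where
\[ \langle \cdot,\cdot\rangle_\cG : \tilde{\DD}(\cG)(\Sigma)\times \cG(\Sigma,\partial\Sigma)\to A \]
is the canonical $A$-bilinear pairing coming from Proposition~\ref{prop-dual} together with the identification $R\Gamma_c(\cG)\cong \Cd(\cG)$ on a quasi-convex fan. The key formal property I need is that this canonical pairing is natural in the second slot: for any morphism $\iota: \cG'\to\cG$ in $\fM$ and any $\xi\in\tilde{\DD}(\cG)(\Sigma)$, $g'\in \cG'(\Sigma,\partial\Sigma)$,
\[ \langle \xi,\, \iota(g')\rangle_\cG \;=\; \langle \tilde{\DD}(\iota)(\xi),\, g'\rangle_{\cG'}. \]
This is just the adjunction of evaluation along $\Cd(\iota)$ and $\Cd(\iota)^*$, and it follows from the naturality clause of Proposition~\ref{prop-dual}.

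Applying this naturality to the inclusions $\iota: \tau_{\leq -p-1}\cF\hookrightarrow \cF$ and $\iota': \tau_{\leq p}\cF\hookrightarrow \cF$, for $f\in\tau_{\leq p}\cF(\Sigma)$ and $g'\in\tau_{\leq -p-1}\cF(\Sigma,\partial\Sigma)$ the pairing rewrites as
\[ \langle \phi\circ\iota'(f),\, \iota(g')\rangle_\cF \;=\; \langle \tilde{\DD}(\iota)\circ\phi\circ\iota'(f),\, g'\rangle_{\tau_{\leq -p-1}\cF}. \]
Thus it suffices to show that the composite morphism
\[ \theta := \tilde{\DD}(\iota)\circ\phi\circ\iota'\;:\;\tau_{\leq p}\cF\longrightarrow\tilde{\DD}(\tau_{\leq -p-1}\cF) \]
vanishes in $\fM$. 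Its source lies in $\fM^{\leq p}$ by definition, while the preceding lemma, applied to $\tau_{\leq -p-1}\cF\in\fM^{\leq -p-1}$, places its target in $\fM^{\geq -(-p-1)}=\fM^{\geq p+1}$.

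To conclude I would verify that $\Hom_\fM(X,Y)=0$ whenever $X\in\fM^{\leq p}$ and $Y\in\fM^{\geq p+1}$. After decomposing into indecomposable summands, any morphism $\cL^\sigma(\codim\sigma-a)\to \cL^\tau(\codim\tau-b)$ with $a\leq p<p+1\leq b$ is, after an overall shift, a morphism $\cL^\sigma\to \cL^\tau((\codim\tau-\codim\sigma)+(a-b))$. Theorem~\ref{thm-mor} forces $a-b\geq 0$ for a nonzero such morphism, contradicting $a<b$. Hence $\theta=0$ and the pairing vanishes. The subtlest point in the argument is the naturality statement for the canonical pairing in exactly the form written above; once that is checked, the rest is purely formal bookkeeping.
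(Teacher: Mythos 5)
Your argument is correct and is essentially the proof the paper gives: restrict the pairing to a morphism $\tau_{\leq p}\cF \to \tilde\DD(\tau_{\leq -p-1}\cF)$, observe via the preceding lemma that the target lies in $\fM^{\geq p+1}$ while the source lies in $\fM^{\leq p}$, and conclude vanishing from Theorem~\ref{thm-mor}. You have simply made explicit two steps the paper leaves implicit (the naturality of the canonical pairing coming from Proposition~\ref{prop-dual}, and the $\Hom$-vanishing between $\fM^{\leq p}$ and $\fM^{\geq p+1}$), and both are checked correctly.
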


\begin{proof}
We need to show that any morphism 
\[ \tau_{\leq p} \cF  \to \tilde\DD(\tau_{\leq -p-1} \cF)\]
is zero. However, $\tilde\DD(\tau_{\leq -p-1} \cF)$ lies in $\fM^{\geq p+1}$ and there is no nonzero morphism from $\tau_{\leq p}\cF$ to it.
\end{proof}

A pairing as in the previous lemma induces a pairing on $\Gr \cF$:
\[ \Gr_p \cF(\Sigma) \times \Gr_{-p} \cF(\Sigma,\partial\Sigma) \to A.\]
If the morphism $\phi$ is an isomorphism, then the pairing on $\Gr \cF$ is nondegenerate.

Let us now return to general fans $\Sigma$, not necessarily quasi-convex. Fix volume forms  $\Omega_\sigma \in \det \sigma^\perp$ for all cones $\sigma\in\Sigma$. Recall that the volume form $\Omega_\sigma$ determines an isomorphism $\cL^\sigma\to \tilde{\DD}(\cL^\sigma)(-2\codim\sigma)$, and hence also an isomorphism
\[ \cL^\sigma(\codim\sigma) \to \tilde{\DD}(\cL^\sigma(\codim\sigma)).\]
We call this isomorphism the {\em standard isomorphism} and the resulting pairing in the quasi-convex case the 
{\em standard pairing}.

Let $\cF$ be in $\fM$ and consider a symmetric isomorphism 
\[ \phi: \cF\to \tilde\DD(\cF).\]
If $\cF = \oplus_\sigma W_\sigma \otimes \cL^\sigma(\codim\sigma)$, then $\tilde\DD(\cF) = \oplus_\sigma W^*_\sigma \otimes \tilde\DD(\cL^\sigma(\codim\sigma))$. Since $\phi$ is an isomorphism, the graded morphism $\Gr\phi$ is also an isomorphism. The graded morphism is given by the standard isomorphisms $\cL^\sigma(\codim\sigma) \to \tilde{\DD}(\cL^\sigma(\codim\sigma))$ and isomorphisms $W_\sigma\to W_\sigma^*$, that means, nondegenerate symmetric $\RR$-bilinear pairings 
\[ W_\sigma\times W_\sigma\to\RR.\]
We wish to identify the pairing on $W_\sigma$. 

\begin{lemma} \label{lem-locPD}
The isomorphism $\phi$ induces a nondegenerate symmetric bilinear pairing
\[ \overline{\cF([\sigma])}(-\codim\sigma)  \times \overline{\cF([\sigma],\partial[\sigma])}(-\codim\sigma) \to \RR.\]
This pairing restricts to the pairing on $W_\sigma$: 
\[ W_\sigma \isom \img \big( \overline{\cF([\sigma],\partial [\sigma])}(-\codim\sigma) \to \overline{\cF([\sigma])}(-\codim\sigma) \big).\]
\end{lemma}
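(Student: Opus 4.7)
The plan is to construct the pairing by restricting $\phi$ to the open subfan $[\sigma] \subseteq \Sigma$ and then tracing how $W_\sigma$ sits inside the resulting pairing via the decomposition of $\cF|_{[\sigma]}$.

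For the first step, since $\tilde\DD = R\cHomd(\cdot,\tilde{D}_\Sigma)$ is local, $\phi$ restricts to a symmetric isomorphism $\phi|_{[\sigma]}: \cF|_{[\sigma]} \isom \tilde\DD(\cF)|_{[\sigma]}$ in $\fM([\sigma])$. Viewing $[\sigma]$ as a full-dimensional convex, and hence quasi-convex, fan in $\Span\sigma$, Proposition~\ref{prop-dual} together with the $R\Gamma_c$-isomorphism $R\Gamma_c(F) \isom \Cd(F)$ yields a nondegenerate $A$-bilinear pairing
\[
\cF([\sigma]) \times \cF([\sigma],\partial\sigma) \to A(-2\codim\sigma).
\]
The shift $-2\codim\sigma$ arises because $\tilde{D}_\Sigma$ is indexed by the codimension of $\zz$ in $V$ rather than in $\Span\sigma$; equivalently, this is the shift that makes $\cL^\sigma(\codim\sigma)$ self-dual (Lemma~\ref{lem-dualL}). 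Reducing modulo the maximal ideal $m\subset A$ and shifting each factor by $-\codim\sigma$ produces
\[
\overline{\cF([\sigma])}(-\codim\sigma) \times \overline{\cF([\sigma],\partial\sigma)}(-\codim\sigma) \to \RR,
\]
which is symmetric because $\phi$ is symmetric and nondegenerate because $\phi$ is an isomorphism.

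For the second step I would use the decomposition $\cF|_{[\sigma]} = \bigoplus_{\tau \leq \sigma} W_\tau \otimes \cL^\tau|_{[\sigma]}$ to obtain
\[
\overline{\cF([\sigma])} = \bigoplus_{\tau \leq \sigma} W_\tau \otimes \overline{\cL^\tau_\sigma}, \qquad \overline{\cF([\sigma],\partial\sigma)} = \bigoplus_{\tau \leq \sigma} W_\tau \otimes \overline{\cL^\tau([\sigma],\partial\sigma)}.
\]
By the proof of Lemma~\ref{lem-prec-decomp}, the map $\overline{\cL^\tau_\sigma} \to \overline{\cL^\tau(\partial\sigma)}$ is an isomorphism for $\tau<\sigma$ and is zero for $\tau=\sigma$, so the image of $\overline{\cF([\sigma],\partial\sigma)}\to\overline{\cF([\sigma])}$ is exactly $W_\sigma\otimes\overline{\cA_\sigma}\cong W_\sigma$, which identifies $W_\sigma$ as claimed. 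To see that the restriction of the big pairing to $W_\sigma$ agrees with the intrinsic pairing coming from $\Gr\phi$, I would decompose $\phi$ into blocks $\phi_{\tau,\tau'}$; by Theorem~\ref{thm-mor} every off-diagonal block with $\tau\neq\tau'$ factors through a sheaf morphism in strictly positive degree, which annihilates the degree-zero generator $1\in\overline{\cA_\sigma}$ after reduction mod $m$. Only the diagonal block $\phi_{\sigma,\sigma}$ survives, and by the corollary following Theorem~\ref{thm-mor} it is a scalar multiple of the intrinsic pairing $W_\sigma \times W_\sigma \to \RR$ tensored with the canonical self-pairing of $\cL^\sigma(\codim\sigma)$.

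The principal obstacle is the careful bookkeeping of grading shifts throughout — both the $\codim\sigma$ discrepancy between working with $[\sigma]$ inside $V$ versus inside $\Span\sigma$, and the $\codim\sigma$ shift in the normalization $\cL^\sigma(\codim\sigma)$ — together with the verification that off-diagonal blocks of $\phi$ contribute only through the maximal ideal $m$ and therefore drop out after reduction.
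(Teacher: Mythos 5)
Your proof is correct and follows essentially the same route as the paper: localize to $[\sigma]$ viewed as a quasi-convex fan in $\Span\sigma$, apply Proposition~\ref{prop-dual} to get the nondegenerate pairing, and use the degree bounds coming from Theorem~\ref{thm-mor} to show that only the degree-zero $\phi_{\sigma,\sigma}$ block contributes after reduction mod~$m$, which is exactly what the paper means by ``reduce to the case $\cF = W_\sigma \otimes \cL^\sigma(\codim\sigma)$.'' The one place the paper is more careful than your sketch is in making the $-2\codim\sigma$ shift precise via the Koszul comparison $\tilde\DD_V(\cF) = \tilde\DD_{\Span\sigma}(\cF)\otimes(\det\sigma^\perp)^*$ and in verifying that the standard self-duality of $\cL^\sigma(\codim\sigma)$ changes by the \emph{same} factor $(\det\sigma^\perp)^*$, which is what guarantees the intrinsic pairing $W_\sigma\to W_\sigma^*$ is the same whether computed over $V$ or over $\Span\sigma$ --- you flag this bookkeeping as the obstacle but do not actually carry it out.
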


\begin{proof}
First assume that $\Sigma=[\sigma]$ and $\codim\sigma=0$. Then $[\sigma]$ is a quasi-convex fan and the isomorphism $\phi$ defines the pairing as stated. Since $W_\sigma$ is canonically a subspace of $\overline{\cF([\sigma])}$, we claim that this pairing restricts to the pairing on $W_\sigma$. Indeed, we may reduce to the case $\cF=  W_\sigma \otimes \cL^\sigma(\codim\sigma)$, in which case the pairings are equal.

For general fans $\Sigma$ and cones $\sigma\in\Sigma$, the statements of the lemma are local, so we may replace $\Sigma$ with $[\sigma]$ and $\cF$ with $\cF|_{[\sigma]}$. The duality functor on $[\sigma]$ is different if we consider $[\sigma]$ a fan in $V$ or in $\Span\sigma$. We can use the Koszul resolution as in Lemma~\ref{lem-dualL} to compare the duality functors on $V$ and on $\Span \sigma$: 
\[ \tilde\DD_V(\cF) = \tilde\DD_{\Span\sigma} (\cF)\otimes (\det \sigma^\perp)^*.\]
We use the volume form $\Omega_\sigma$ to identify $(\det \sigma^\perp)^* \isom \RR(2\codim\sigma)$. Then
\[ \phi: \cF\to \tilde\DD_V(\cF) = \tilde\DD_{\Span\sigma} (\cF) (2\codim\sigma)\]
induces the isomorphism
\[ \cF(-\codim\sigma) \to \tilde\DD_{\Span\sigma} (\cF(-\codim\sigma)),\]
which gives the pairing on the quasi-convex $[\sigma]$ as stated. 

To see that this pairing restricts to the pairing on $W_\sigma$, note that the standard isomorphism $\cL^\sigma(\codim\sigma) \to \tilde{\DD} (\cL^\sigma(\codim\sigma))$ also differs by the factor $(\det \sigma^\perp)^*$ depending on whether we consider the fan $[\sigma]$ in $V$ or in $\Span\sigma$. This means that the map $W\to W^*$ is the same whether we construct it on $V$ or on $\Span\sigma$. The case where $[\sigma]$ is a quasi-convex fan in $\Span\sigma$ then shows that the pairing restricts to the pairing on $W_\sigma$.
\end{proof}
 
Consider the special case of a subdivision $\pi:\hat\Sigma \to \Sigma$. We fix volume forms $\Omega_\tau \in \det \tau^\perp$ for all cones $\tau\in\hat\Sigma$. Let $\cG = \cL^\tau(\codim\tau)$ in $\fM^0(\hat\Sigma)$, and $\cF = \pi_* \cG$. The standard isomorphism
\[ \phi: \cG\to \tilde\DD(\cG),\]
induces the isomorphism $\pi_*(\phi): \cF\to \tilde\DD(\cF)$. 
Let $\cF = \oplus_\tau W_\sigma \otimes \cL^\sigma(\codim\sigma)$. Then 
\[ W_\sigma \isom \img \big( \overline{\cG(\hat\sigma,\partial \hat\sigma)}(-\codim\sigma) \to \overline{\cG(\hat\sigma)}(-\codim\sigma) \big),\]
where $\hat\sigma\subset \hat\Sigma$ is the inverse image of $[\sigma]$. Now $\hat\sigma$ is a quasi-convex fan in $\Span\sigma$ and the sheaf
\[\cG(-\codim\sigma)|_{\hat\sigma} = \cL^\tau (\codim_{\Span\sigma} \tau),\]
which lies in $\fM^0([\sigma])$, has the standard pairing. 
\[ \overline{\cG(\hat\sigma)}(-\codim\sigma) \times \overline{\cG(\hat\sigma,\partial\hat\sigma)}(-\codim\sigma) \to \RR.\]
This pairing is determined by a volume form $\Omega_\tau'$, where we consider $\tau\subset\Span\sigma$. We choose the form so that $\Omega_\tau=\Omega_\tau' \cdot \Omega_\sigma$.
Lemma~\ref{lem-locPD} then gives:

\begin{lemma}
The standard pairing on $\cG(-\codim\sigma)|_{\hat\sigma}$ restricts to the pairing on $W_\sigma$.
\end{lemma}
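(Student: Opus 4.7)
The plan is to apply Lemma~\ref{lem-locPD} to $\cF = \pi_*\cG$ equipped with the pushforward isomorphism $\pi_*(\phi)$, where $\phi\colon \cG\to\tilde\DD(\cG)$ is the standard isomorphism determined by the volume forms $\Omega_\tau$. Lemma~\ref{lem-locPD} asserts that the pairing on $\cF$ induced by $\pi_*(\phi)$ restricts to the pairing on $W_\sigma$ through the canonical identification
\[ W_\sigma \isom \img\bigl(\overline{\cF([\sigma],\partial[\sigma])}(-\codim\sigma) \to \overline{\cF([\sigma])}(-\codim\sigma)\bigr). \]
Using $\cF([\sigma]) = \cG(\hat\sigma)$ and $\cF([\sigma],\partial[\sigma]) = \cG(\hat\sigma,\partial\hat\sigma)$, this realises the pairing on $W_\sigma$ as the restriction of a pairing on $\overline{\cG(\hat\sigma)}(-\codim\sigma) \times \overline{\cG(\hat\sigma,\partial\hat\sigma)}(-\codim\sigma)$.

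Next, I would invoke Theorem~\ref{thm-compat2} to replace the pairing coming from $\pi_*(\phi)$ by the pairing coming directly from $\phi$: the theorem asserts that $\psi_\Sigma$ is compatible with $\pi_*$, so that the bilinear pairing constructed on $\pi_*\cG$ using $\pi_*(\phi)$ agrees with the one constructed on $\cG$ using $\phi$, when both are viewed as pairings of the same spaces $\cG(\hat\sigma)$ and $\cG(\hat\sigma,\partial\hat\sigma)$. This reduces the statement to a purely local comparison on the quasi-convex fan $\hat\sigma$, with the caveat that $\tilde\DD = \tilde\DD_V$ is still the duality functor with respect to the ambient space $V$, not $\Span\sigma$.

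The final step is to pass from $\tilde\DD_V$ to $\tilde\DD_{\Span\sigma}$, exactly as done inside the proof of Lemma~\ref{lem-locPD}. The Koszul resolution gives $\tilde\DD_V(\cG) \isom \tilde\DD_{\Span\sigma}(\cG)\otimes (\det\sigma^\perp)^*$, and the volume form $\Omega_\sigma$ identifies $(\det\sigma^\perp)^* \isom \RR(2\codim\sigma)$. The normalisation $\Omega_\tau = \Omega_\tau'\cdot\Omega_\sigma$ is chosen precisely so that under this identification, the standard isomorphism $\phi$ on $\cG$ built from $\Omega_\tau$ corresponds, after shifting by $-\codim\sigma$, to the standard isomorphism on $\cG(-\codim\sigma)|_{\hat\sigma}$ built from $\Omega_\tau'$. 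The pairing on $W_\sigma$ therefore coincides with the pairing obtained from the standard pairing on $\cG(-\codim\sigma)|_{\hat\sigma}$, proving the lemma.

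The main obstacle is the bookkeeping of grading shifts and of the two competing duality functors (on $V$ versus on $\Span\sigma$); everything else is already assembled by Lemma~\ref{lem-locPD} and Theorem~\ref{thm-compat2}, so once the volume-form factorisation $\Omega_\tau = \Omega_\tau'\cdot\Omega_\sigma$ and the codimension shifts are tracked consistently, the conclusion is immediate.
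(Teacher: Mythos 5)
Your proposal is correct and follows essentially the same route the paper intends: the paper compresses the argument to ``Lemma~\ref{lem-locPD} then gives,'' and your reconstruction supplies exactly the pieces that make that work---applying Lemma~\ref{lem-locPD} to $\cF=\pi_*\cG$ with the isomorphism $\pi_*(\phi)$, invoking Theorem~\ref{thm-compat2} to trade the $\pi_*(\phi)$-pairing on $[\sigma]$ for the $\phi$-pairing on $\hat\sigma$, and using the Koszul comparison of $\tilde\DD_V$ with $\tilde\DD_{\Span\sigma}$ together with the normalization $\Omega_\tau=\Omega_\tau'\cdot\Omega_\sigma$ to identify the result with the standard pairing on $\cG(-\codim\sigma)|_{\hat\sigma}$. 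The only cosmetic point is that the passage from $\tilde\DD_V$ to $\tilde\DD_{\Span\sigma}$ is already carried out inside the proof of Lemma~\ref{lem-locPD}, so listing it as a separate final step is redundant rather than an extra idea.
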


\subsection{Relative Hodge-Riemann bilinear relations.}

Let $\pi: \hat\Sigma \to \Sigma$ be a subdivision. We fix volume forms for all cones in $\Sigma$ and $\hat\Sigma$. 

Let $\cG = \cL^\tau(\codim\tau)$ for some $\tau\in\hat\Sigma$, and let $\cF = \pi_* \cG = \oplus_\sigma W_\sigma \otimes \cL^\sigma(\codim\sigma)$. The standard isomorphism 
$\phi: \cG\to \tilde{\DD}(\cG)$ induces the isomorphism $\pi_* \phi: \cF\to \tilde{\DD}(\cF)$. The graded isomorphism $\Gr \pi_* \phi$ is given by non-degenerate symmetric bilinear pairings on $W_\sigma$. For $\hat{l} \in \cA^2(\hat\Sigma)$, define the quadratic form $Q_{\hat{l}}$ on $W^{-i}$, $i\geq 0$:
\[ Q_{\hat{l}}(w) = \langle \hat{l}^i w, w\rangle.\]

\begin{theorem}[Relative Hodge-Riemann bilinear relations] \label{thm-RHR1}

With notation as above, let $\cF = \pi_* \cL^\tau(\codim\tau)$, and let $\hat{l} \in \cA^2(\hat\Sigma)$ be relatively strictly convex with respect to $\pi$. Then the quadratic form
\[ (-1)^{\frac{c-i}{2}} Q_{\hat{l}},\]
where $c= \codim\tau-\codim\sigma$,  is positive definite on the primitive part
\[ \prim_{\hat{l}} W^{-i}_\sigma = \ker \hat{l}^{i+1}: W^{-i}_\sigma\to   W^{i+2}_\sigma\]
for any $\sigma\in\Sigma$ and $i\geq 0$.
\end{theorem}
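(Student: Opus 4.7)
The plan is to prove Theorem~\ref{thm-RHR1} simultaneously with Theorem~\ref{thm-RHL1} by induction on the effective dimension $n=\dim\hat\Sigma-\dim\hat\zz$, using a circular chain of equivalences and reductions: Theorem~\ref{thm-RHR1} and Theorem~\ref{thm-RHL1} in dimension $n$ are equivalent to Theorem~\ref{thm-convex} in dimension $n$; Theorem~\ref{thm-convex} in dimension $n$ reduces to Theorem~\ref{thm-complete} in dimension $n-1$; and Theorem~\ref{thm-complete} in dimension $n-1$ is deduced from RHL and RHR in dimension $n-1$. As a first step I would reduce from general $\tau\in\hat\Sigma$ to $\tau=\hat\zz$ and $\cG=\cL_{\hat\Sigma}(\codim\hat\zz)$, using the replacement of $\hat\Sigma$ by $\hat\Sigma_\tau=\{\rho+\Span\tau\mid \rho\in\st\tau\}$ from Section~\ref{sec-npfans}, which turns $\cL^\tau$ into an intersection cohomology sheaf. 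Since $W_\sigma$ and its pairing are constructed locally on $\Sigma$ (Lemma~\ref{lem-locPD}), I would then localize to $\Sigma=[\sigma]$ and set $\Phi=\pi^{-1}([\sigma])$: a convex fan for which $W_\sigma\isom\img(IH(\Phi,\partial\Phi)\to IH(\Phi))$ with the same $\hat{l}$-action and the same quadratic form. Thus Theorem~\ref{thm-RHR1} reduces to Theorem~\ref{thm-convex}.

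For Theorem~\ref{thm-convex}, I would project $\partial\Phi$ from a ray $\RR\xi$ with $\xi$ in the interior of $\Phi$, obtaining a complete fan $\hat\Sigma'$ in $V/\RR\xi$ of effective dimension $n-1$. The ray $\xi$ determines a convex piecewise linear function $l'$ on $\hat\Sigma'$ whose action on $IH(\hat\Sigma')$ has image exactly $W_\Phi$ (after the appropriate shift), via the long exact sequence for the pair $(\Phi,\partial\Phi)$ together with the isomorphism $IH(\partial\Phi)\isom IH(\hat\Sigma')$. The strictly convex $\hat{l}$ on $\Phi$ descends to a strictly convex function $\hat{l}'$ on $\hat\Sigma'$, and the Poincar\'e pairing and the quadratic form $Q_{\hat{l}}$ on $W_\Phi$ match those appearing in Theorem~\ref{thm-complete} on $\hat\Sigma'$ up to a controlled sign. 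Thus Theorem~\ref{thm-convex} in dimension $n$ follows from Theorem~\ref{thm-complete} in dimension $n-1$.

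To close the loop, I would prove Theorem~\ref{thm-complete} on the complete fan $\hat\Sigma'$ of dimension $n-1$ as follows. Let $\Sigma'$ be the coarsest fan in $V/\RR\xi$ on which $l'$ is strictly convex; then $\hat\Sigma'$ subdivides $\Sigma'$ and $\hat{l}'$ is relatively strictly convex with respect to this subdivision. By the inductive hypothesis, RHL and RHR in dimension $n-1$ give a decomposition $\pi_*\cL_{\hat\Sigma'}(\codim\zz_{\hat\Sigma'})=\bigoplus_\nu W_\nu\otimes \cL^\nu(\codim\nu)$ with Lefschetz action by $\hat{l}'$ and HR on each $W_\nu$. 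Globally, $l'\cdot IH(\hat\Sigma')\isom \bigoplus_\nu W_\nu\otimes l'\cdot IH(\st\nu)$, and Theorems~\ref{thm-HL} and \ref{thm-HR} for the strictly convex $l'$ on $\Sigma'$ equip each factor $l'\cdot IH(\st\nu)$ (viewed in $IH(\overline{\st\nu})$ via Section~\ref{sec-products}) with HL and HR under $l'$. Assembling the two Lefschetz structures and invoking the tensor-product HR (Section~\ref{sec-products}) yields HR on $\bigoplus_\nu W_\nu\otimes l'\cdot IH(\st\nu)$ for the operator $\hat{l}'+l'$; a degeneration argument paralleling Theorems~\ref{thm-rel-global-HL}--\ref{thm-complete-HL} (signature is locally constant on a connected family of Lefschetz operations) then transfers this to HR for $\hat{l}'$ on $l'\cdot IH(\hat\Sigma')$. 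The main obstacle is matching the intersection pairings across all these reductions (Lemma~\ref{lem-locPD}, the projection from $\xi$, Lemma~\ref{lem-loc-mult}, and Theorem~\ref{thm-compat2}), keeping careful track of volume forms and signs so that the signature statements align at each step.
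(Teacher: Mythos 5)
Your first two paragraphs track the paper's reductions faithfully (reducing to the pointed case and $\cG=\cL_{\hat\Sigma}$, localizing to $\Phi=\pi^{-1}[\sigma]$ to reduce to Theorem~\ref{thm-convex1}, then projecting $\partial\Phi$ from an interior ray to reduce to Theorem~\ref{thm-complete} in one lower dimension). The gap is in the third paragraph, where you prove Theorem~\ref{thm-complete}. Your degeneration argument, which mirrors Theorems~\ref{thm-rel-global-HL}--\ref{thm-complete-HR}, establishes HL and HR on $l'\cdot IH(\hat\Sigma')$ only for functions of the form $l'+\varepsilon^2\tilde{l}$ with $\tilde{l}$ relatively strictly convex and $\varepsilon$ small. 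But the $\hat{l}'$ you inherit from the previous reduction is an \emph{arbitrary} strictly convex function on $\hat\Sigma'$, not one close to $l'$. Your final ``signature is locally constant on a connected family of Lefschetz operations'' step is exactly Lemma~\ref{lem-equiv2} of the paper and needs, as input, that $Q_{\hat{l}'}$ is nondegenerate on $l'\cdot IH(\hat\Sigma')$ for \emph{all} strictly convex $\hat{l}'$ -- that is, it needs part (1) of Theorem~\ref{thm-complete} for all strictly convex functions, not just those of the form $l'+\varepsilon^2\tilde{l}$. The ordinary Hard Lefschetz theorem (Theorem~\ref{thm-HL}) on $IH(\hat\Sigma')$, which you do have for all strictly convex $\hat{l}'$, does not imply Lefschetz for $\hat{l}'$ on $l'\cdot IH(\hat\Sigma')$: the latter space is shifted and symmetric about a different degree. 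The paper addresses this explicitly (after Theorem~\ref{thm-complete-HR}: ``With the argument that we used above we can not yet prove Theorem~\ref{thm-complete} in general'') and then supplies the missing step via the McMullen-type sign argument (Lemma~\ref{lem-mcmullen}), a direct proof for simplicial $\hat\Sigma'$, and a reduction of the nonsimplicial case to the simplicial case via a sequence of star subdivisions, using the Karu--Bressler--Lunts flip identity $Q_{\hat{l}}(\hat{h})=Q_{\tilde{l}}(\tilde{h})+Q_{\bar{l}}(\bar{h})$. None of this appears in your proposal, so the proof of part (1) of Theorem~\ref{thm-complete} is missing and the induction does not close.

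A secondary, smaller issue: you assert that $l'$ acts on $IH(\hat\Sigma')$ with image ``exactly $W_\Phi$.'' The paper does not make this identification; instead, given a primitive class $h\in W_\Phi$, it produces $g\in IH(\hat\Sigma)$ with $h+\chi_\rho g$ primitive in $IH(\Phi^c)$, and compares $Q_{\hat l}(h)$, $Q_{\hat l}(h+\chi_\rho g)$ and $Q_{\hat l}(l\cdot g)$ rather than identifying the two spaces. You would need to justify your stronger claim or pass to the more indirect comparison.
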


Extending the theorem to general sheaves $\cG$ in $\fM^0(\hat\Sigma)$ is not as simple as in the case of the RHL theorem, so we will not do it. (The problem is that the sign of $Q_{\hat{l}}$ on $\prim_{\hat{l}} W^{-i}_\sigma$ depends on the dimension of $\tau$. One needs to introduce an additional filtration to take this into account.) Note that Theorem~\ref{thm-RHR} in the introduction is a special case of Theorem~\ref{thm-RHR1}, where $\hat\Sigma$ is pointed and $\cG = \cL_{\hat\Sigma}(\codim\hat\zz)$. (In that case $c=\dim\sigma$ and $W_\sigma^{\dim\sigma-i}$ in Theorem~\ref{thm-RHR} is equal to $W_\sigma^{-i}$ in Theorem~\ref{thm-RHR1}. The quadratic forms $Q_{\hat{l}}$ are the same up to the change in grading.) The two theorems are in fact equivalent. The proof of the following lemma is similar to the proof of Lemma~\ref{lem-RHL-red}, so we omit it.

\begin{lemma} \label{lem-RHR-red} Theorem~\ref{thm-RHR} for all subdivisions $\pi: \hat\Sigma \to \Sigma$, where $\hat\Sigma$ is a pointed fan of dimension at most $n$, implies Theorem~\ref{thm-RHR1} for all subdivisions $\pi: \hat\Sigma \to \Sigma$ of $n$-dimensional fans. \qed
\end{lemma}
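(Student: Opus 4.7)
The plan is to imitate the two-step reduction used in the proof of Lemma~\ref{lem-RHL-red}, while at each step checking that the standard intersection pairing, and hence the quadratic form $Q_{\hat{l}}$ together with its sign, is preserved.

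First, I would reduce to the case $\tau = \hat\zz_{\hat\Sigma}$. Given $\tau \in \hat\Sigma$, form the fans
\[ \hat\Sigma_\tau = \{\sigma + \Span\tau \mid \sigma \in \st_{\hat\Sigma}\tau\}, \qquad \Sigma_{\pi(\tau)} = \{\sigma + \Span\tau \mid \sigma \in \st_\Sigma \pi(\tau)\}, \]
so that $\pi$ restricts to a subdivision $\hat\Sigma_\tau \to \Sigma_{\pi(\tau)}$ whose minimal cone is $\Span\tau$. Under the isomorphism of ringed spaces $[\st\tau] \isom \hat\Sigma_\tau$ over $A$, the sheaf $\cL^\tau$ on $\hat\Sigma$ corresponds to the intersection cohomology sheaf $\cL_{\hat\Sigma_\tau}$, and $\hat{l}$ pulls back to a relatively strictly convex function for the new subdivision. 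The decomposition $\pi_* \cL^\tau(\codim\tau) = \bigoplus_\sigma W_\sigma \otimes \cL^\sigma(\codim\sigma)$ is identified summand-by-summand with the corresponding decomposition for $\hat\Sigma_\tau \to \Sigma_{\pi(\tau)}$. Choosing volume forms as in the discussion preceding Theorem~\ref{thm-RHR1} (namely $\Omega_\tau = \Omega'_\tau \cdot \Omega_{\pi(\tau)}$), the standard pairings on $W_\sigma$ coincide in the two situations. Since $c = \codim\tau - \codim\sigma$ is computed in the same ambient space $V = \hat V$, the sign $(-1)^{(c-i)/2}$ is unchanged.

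Second, I would reduce a (possibly non-pointed) $\hat\Sigma$ to a pointed fan. Write $\hat\Sigma = \hat\Sigma' \times \RR^m$ with $\hat\Sigma'$ pointed; then $\Sigma = \Sigma' \times \RR^m$ decomposes compatibly and $\pi = \pi' \times \Id$ for a subdivision $\pi' : \hat\Sigma' \to \Sigma'$. The sheaf $\cL_{\hat\Sigma}(\codim\hat\zz)$ is the pullback of $\cL_{\hat\Sigma'}(\codim\hat\zz')$. By Lemma~\ref{lem-compat1}, the standard pairing on the pullback is obtained from the standard pairing on $\hat\Sigma'$ by tensoring with $B$ over $A$; reducing modulo the maximal homogeneous ideal then canonically identifies the $\RR$-bilinear pairings on $W_\sigma$ in the two situations. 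After modifying $\hat{l}$ by a global linear function so it becomes the pullback of a relatively strictly convex $\hat{l}'$ on $\hat\Sigma'$ (which does not change the action on $W_\sigma$), we get $Q_{\hat{l}} = Q_{\hat{l}'}$. In the pointed case so obtained, $c$ equals $\dim\sigma'$ in $\hat\Sigma'$, matching the formulation of Theorem~\ref{thm-RHR} for pointed fans of dimension at most $n$.

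The main technical obstacle is verifying compatibility of the standard pairing under these two reductions; once that is secured by Lemma~\ref{lem-locPD} for the first step and Lemma~\ref{lem-compat1} for the second, positive definiteness of $(-1)^{(c-i)/2} Q_{\hat{l}}$ on the primitive part $\prim_{\hat{l}} W_\sigma^{-i}$ transfers directly from the pointed case of Theorem~\ref{thm-RHR} to the general Theorem~\ref{thm-RHR1}.
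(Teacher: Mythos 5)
Your proof is correct and follows the argument the paper intends (the paper omits the proof, saying only that it is similar to that of Lemma~\ref{lem-RHL-red}); you correctly supply the additional verifications, absent in the RHL case, that the standard pairings and the sign $(-1)^{(c-i)/2}$ are preserved under both reductions, citing Lemma~\ref{lem-locPD} and the volume-form normalization $\Omega_\tau = \Omega_\tau'\cdot\Omega_\sigma$ for the passage to $\hat\Sigma_\tau\to\Sigma_{\pi(\tau)}$ and Lemma~\ref{lem-compat1} for the $\otimes_A B$ step. One small slip: in your definition of $\Sigma_{\pi(\tau)}$ the added linear subspace should be $\Span\pi(\tau)$ rather than $\Span\tau$ (as in the construction of $\Sigma_\tau$ at the end of Section~2), though this does not affect the argument since for $\sigma\in\st_\Sigma\pi(\tau)$ one has $\codim\bigl(\sigma+\Span\pi(\tau)\bigr) = \codim\sigma$, so $c = \codim\tau-\codim\sigma$ and the sign are unchanged.
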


We will also write down a generalization of Theorem~\ref{thm-convex}.

Let $\Phi$ be a convex fan with boundary $\partial\Phi$. Fix volume forms for all cones in $\Phi$.  Let $\cG = \cL^\tau(\codim\tau)$ for some $\tau\in\Phi$. Define 
\[ W_\Phi = \img \big(\overline{\cG(\Phi,\partial\Phi)} \to \overline{\cG(\Phi)}\big).\]
A piecewise linear function $\hat{l}\in \cA^2(\Phi)$ acts on $W_\Phi$ by multiplication. The  standard pairing
\[ \overline{\cG(\Phi)} \times \overline{\cG(\Phi,\partial\Phi)} \to \RR\] 
induces a nondegenerate symmetric bilinear pairing
\[ W_\Phi \times W_\Phi \to \RR.\]
Define the quadratic form $Q_{\hat{l}}$ on $W_\Phi^{-i}$, $i\geq 0$,
\[ Q_{\hat{l}}(w) = \langle \hat{l}^i w, w\rangle.\]

\begin{theorem}\label{thm-convex1}
If $\hat{l}$ is strictly convex on $\Phi$, then for any $i\geq 0$, the form 
\[ (-1)^{\frac{c-i}{2}} Q_{\hat{l}},\]
where $c=\codim\zz_\Phi - \codim\tau$, is positive definite on the primitive part
\[ \prim_{\hat{l}} W^{-i}_\Phi = \ker \hat{l}^{i+1}: W^{-i}_\Phi\to   W^{i+2}_\Phi.\]
\end{theorem}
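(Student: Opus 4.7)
The approach is to deduce the theorem from the Relative Hodge--Riemann bilinear relations (Theorem~\ref{thm-RHR1}) by realizing $\Phi$ as a subdivision of its own support. Set $\sigma:=|\Phi|$, regarded as a single (possibly non-pointed) full-dimensional cone in $V$, and let $\Sigma$ be the fan consisting of $\sigma$ together with all of its faces. The identity of $V$ then realizes $\pi:\Phi\to\Sigma$ as a subdivision whose unique maximal cone is $\sigma$, with $\pi^{-1}([\sigma])=\Phi$. Since $\Phi$ is convex, $\sigma$ is full-dimensional and hence $\codim\sigma=0$; consequently, the hypothesis that $\hat{l}$ is strictly convex on $\Phi$ coincides tautologically with relative strict convexity of $\hat{l}$ with respect to $\pi$.

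Apply Theorem~\ref{thm-RHR1} to the subdivision $\pi$ with the given cone $\tau\in\Phi$ and the sheaf $\cG=\cL^\tau(\codim\tau)$. Decomposing
\[ \pi_{*}\cG \;=\; \bigoplus_{\nu\in\Sigma} W_\nu\otimes\cL^\nu(\codim\nu), \]
one obtains the relative HR bilinear relations on every $W_\nu$, and we specialize to the top cone $\nu=\sigma$. By Lemma~\ref{lem-locPD} applied to $\cF=\pi_{*}\cG$, the space $W_\sigma$ identifies canonically with
\[ \img\!\bigl(\overline{\pi_{*}\cG(\Sigma,\partial\Sigma)}(-\codim\sigma)\longrightarrow\overline{\pi_{*}\cG(\Sigma)}(-\codim\sigma)\bigr); \]
since $\codim\sigma=0$ and $\pi^{-1}([\sigma])=\Phi$ give $\pi_{*}\cG(\Sigma)=\cG(\Phi)$ and $\pi_{*}\cG(\Sigma,\partial\Sigma)=\cG(\Phi,\partial\Phi)$, this is exactly $W_\Phi$. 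Combining Lemma~\ref{lem-locPD} with Theorem~\ref{thm-compat2} (the push-forward compatibility of the duality functor, which identifies the pairing on $\pi_{*}\cG$ with that on $\cG$), the multiplication action of $\hat{l}$ and the standard symmetric bilinear pairing on $W_\sigma$ match the corresponding data on $W_\Phi$ used to define $Q_{\hat{l}}$ in the statement.

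With this identification, primitive parts and quadratic forms transport between $W_\sigma$ and $W_\Phi$, so the positive-definiteness conclusion of Theorem~\ref{thm-RHR1} yields the desired positive-definiteness on $\prim_{\hat{l}}W_\Phi^{-i}$. The main technical obstacle I expect is matching sign exponents: the direct application of Theorem~\ref{thm-RHR1} produces the exponent $(\codim\tau-\codim\sigma-i)/2=(\codim\tau-i)/2$, whereas Theorem~\ref{thm-convex1} records $(\codim\zz_\Phi-\codim\tau-i)/2$. Reconciling these requires tracking the orientation conventions (the fixed orientation on $\tau$ together with the volume form $\Omega_\tau\in\det\tau^\perp$) through the identifications above. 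This is most cleanly handled by first reducing, via the pull-back $\pi^{*}$ of Section~\ref{sec-npfans}, to the case of a pointed fan $\Phi$, where the equivalence to the already established Theorem~\ref{thm-convex} at $\tau=\zz_\Phi$ provides an independent check and the general $\tau$ case then follows by the same argument applied to $\cG=\cL^\tau(\codim\tau)$ in place of $\cL_\Phi(\codim\zz_\Phi)$.
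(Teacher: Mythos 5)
Your proof follows the paper's own route, namely the converse direction of Lemma~\ref{lem-phi-red}: view $\Phi$ as a subdivision of the one-cone fan $[\sigma]$ with $\sigma=|\Phi|$, apply Theorem~\ref{thm-RHR1} with $\codim\sigma=0$, and use Lemma~\ref{lem-locPD} (and the lemma following it) to identify the pairing and quadratic form on $W_\sigma$ with those on $W_\Phi$. The exponent mismatch you flag is real but is not an orientation subtlety to be ``reconciled''; the value $c=\codim\zz_\Phi-\codim\tau$ printed in the statement of Theorem~\ref{thm-convex1} is simply a misprint (for $\tau=\zz_\Phi$ it gives $c=0$ rather than the $c=n$ needed to recover Theorem~\ref{thm-convex}), and the exponent $(\codim\tau-\codim|\Phi|-i)/2$ that your derivation produces from Theorem~\ref{thm-RHR1} is the correct one.
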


Note that Theorem~\ref{thm-convex} in the introduction is a special case of Theorem~\ref{thm-convex1}, where $\Phi$ is pointed and $\cG = \cL_{\Phi}(\codim \zz_\Phi)$. The two theorems are in fact equivalent. The proof of this is similar to the proof of Lemma~\ref{lem-RHL-red} and we omit it. 

\begin{lemma} \label{lem-convex-red} Theorem~\ref{thm-convex} for all pointed convex fans $\Phi$ of dimension at most $n$ implies Theorem~\ref{thm-convex1} for all convex fans $\Phi$ of dimension $n$. \qed
\end{lemma}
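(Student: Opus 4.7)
\textbf{Proof proposal for Lemma~\ref{lem-convex-red}.}

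The plan is to follow the two-step reduction in the proof of Lemma~\ref{lem-RHL-red}: first replace $\cG = \cL^\tau(\codim\tau)$ by $\cG' = \cL_{\Phi'}(\codim\zz_{\Phi'})$ on an auxiliary convex fan $\Phi'$, and then reduce from the (possibly) non-pointed $\Phi'$ to its pointed factor by splitting off an $\RR^m$-direction.

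For the first step, given $\Phi$ convex and $\tau\in\Phi$, I would introduce
\[ \Phi_\tau = \{\sigma+\Span\tau \mid \sigma\in\st_\Phi\tau\}, \]
with boundary $\partial\Phi_\tau$ built from the cones arising from $(\partial\Phi)\cap\st_\Phi\tau$. The map $\sigma\mapsto\sigma+\Span\tau$ is a poset isomorphism from $\st_\Phi\tau$ onto $\Phi_\tau$, and $\tau\leq\sigma$ implies $\Span\tau\subseteq\Span\sigma$, so the rings $\cA_\sigma$ and $\cA_{\sigma+\Span\tau}$ coincide. Hence $[\st_\Phi\tau]$ and $\Phi_\tau$ are isomorphic as ringed spaces over $A$, and $\cL^\tau_\Phi$ matches $\cL_{\Phi_\tau}$ under this isomorphism, with $\Span\tau$ the minimal cone of $\Phi_\tau$ of codimension $\codim\tau$. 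The support $|\Phi_\tau|$ is the tangent cone of $|\Phi|$ along $\tau$, hence convex (one may assume $\tau$ is interior to $\Phi$, since otherwise $W_\Phi=0$ vacuously). After subtracting a global linear form that agrees with $\hat{l}$ on $\Span\tau$, the function $\hat{l}$ becomes translation-invariant along $\Span\tau$ and defines a strictly convex function on $\Phi_\tau$. By Lemma~\ref{lem-locPD}, the space $W_\Phi$, the standard pairing, and the quadratic form $Q_{\hat{l}}$ all correspond canonically to their counterparts on $\Phi_\tau$. This reduces the theorem to the case $\cG = \cL_\Phi(\codim\zz_\Phi)$.

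For the second step, write $\Phi_\tau = \overline\Phi\times\RR^m$ with $\overline\Phi$ pointed, convex, of dimension $\leq n$, and projection $\pi$. Then $\pi^*\cL_{\overline\Phi}=\cL_{\Phi_\tau}$, and Lemma~\ref{lem-compat1} shows $\pi^*$ commutes with $\tilde\DD$, so the standard pairing on $\Phi_\tau$ is the base change of that on $\overline\Phi$ via $\otimes_{\bar A}A$. Consequently $W_{\Phi_\tau}\cong W_{\overline\Phi}$ with matching quadratic forms, once $\hat{l}$ is modified by a global linear form on $V$ to become the pullback of a strictly convex $\overline{\hat{l}}$ on $\overline\Phi$. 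Theorem~\ref{thm-convex} applied to $(\overline\Phi,\overline{\hat{l}})$, which lies in the assumed range since $\dim\overline\Phi\leq n$, yields the desired positivity of $(-1)^{(c-i)/2}Q_{\hat{l}}$ on $\prim_{\hat{l}}W^{-i}_{\overline\Phi}$, and this transports back to the original $(\Phi,\cG,\hat{l})$.

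The main obstacle is the first step: one must verify that the \emph{standard} pairing built from the volume forms $\Omega_\tau\in\det\tau^\perp$ is preserved, rather than just the abstract duality isomorphism. This follows from the local computation of the pairing on $W_\sigma$ in Lemma~\ref{lem-locPD} and the subsequent specialisation to subdivisions, which shows the pairing is determined by the local ringed-space data at $\tau$. Once that compatibility is in hand, the second step is routine bookkeeping via Lemma~\ref{lem-compat1}.
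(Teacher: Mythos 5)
Your proposal is correct and follows essentially the same two-step reduction the paper has in mind when it says the argument is ``similar to the proof of Lemma~\ref{lem-RHL-red}'': first pass from $\cG=\cL^\tau(\codim\tau)$ on $\Phi$ to $\cL_{\Phi_\tau}$ on the tangent-cone fan $\Phi_\tau=\{\sigma+\Span\tau\}$, then split off the $\RR^m$ factor via Lemma~\ref{lem-compat1}. You are also right to flag that, unlike in the RHL reduction, here one must check that the \emph{standard pairing} (not just some abstract duality isomorphism) is preserved, and Lemma~\ref{lem-locPD} is the right tool for that; since changing the volume forms $\Omega_\nu$ only rescales the pairing by a positive constant, even that compatibility is only needed up to positive scalars, which suffices for the definiteness claim. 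One small remark: the parenthetical ``one may assume $\tau$ is interior, since otherwise $W_\Phi=0$'' is not needed for the argument to go through --- the tangent cone $|\Phi_\tau|$ is convex and the ringed-space isomorphism $[\st_\Phi\tau]\isom\Phi_\tau$ holds for any $\tau$, so you can simply run the reduction uniformly without that side case.
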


Theorem~\ref{thm-convex1} is a local version of Theorem~\ref{thm-RHR1}. We show that the two theorems are equivalent.

\begin{lemma} \label{lem-phi-red} Theorem~\ref{thm-convex1} for all convex fans $\Phi$ of dimension at most $n$ is equivalent to Theorem~\ref{thm-RHR1} for all subdivisions of fans of dimension at most $n$.
\end{lemma}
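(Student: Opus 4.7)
The equivalence is established in two directions, each being a direct translation between the two setups via the identification $W_\sigma \cong W_\Phi$ already recorded in the discussion preceding Theorem \ref{thm-RHR1}.

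For Theorem \ref{thm-RHR1} $\Rightarrow$ Theorem \ref{thm-convex1}: given a convex fan $\Phi$ of dimension $d \leq n$ in $V = \Span\Phi$, I take $\Sigma$ to be the fan of faces of the cone $\sigma := |\Phi|$. The identity $V \to V$ then makes $\Phi$ a subdivision of $\Sigma$, itself a fan of dimension $d \leq n$, with $\pi^{-1}([\sigma]) = \Phi$, and a strictly convex $\hat{l}$ on $\Phi$ is automatically relatively strictly convex for $\pi$ because there is only one nontrivial fibre. For any $\tau \in \Phi$, decomposing $\pi_* \cL^\tau(\codim\tau)$, the summand at the top cone is $W_\sigma \cong W_\Phi$ with corresponding standard pairing and $\hat{l}$-action, so Theorem \ref{thm-RHR1} at $\sigma$ reproduces Theorem \ref{thm-convex1} for $\Phi$.

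For the converse, given a subdivision $\pi \colon \hat\Sigma \to \Sigma$ of fans of dimension at most $n$, a cone $\tau \in \hat\Sigma$, relatively strictly convex $\hat{l}$, and $\sigma \in \Sigma$, I set $\Phi := \pi^{-1}([\sigma])$. This is a convex fan in $\Span\sigma$ of dimension $\dim\sigma \leq n$ on which $\hat{l}$ restricts to a strictly convex function. The shifted restriction $\cL^\tau(\codim\tau)|_\Phi (-\codim\sigma)$ equals $\cL^\tau(\codim_{\Span\sigma}\tau)$, the standard sheaf in $\fM^0(\Phi)$ (or vanishes if $\tau \notin \Phi$, in which case $W_\sigma = 0$ trivially). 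Once more $W_\sigma \cong W_\Phi$ with compatible pairing and $\hat{l}$-action, so Theorem \ref{thm-convex1} applied to $\Phi$ with this sheaf yields Theorem \ref{thm-RHR1} at $\sigma$.

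The point requiring care is that the Hodge--Riemann sign $(-1)^{(c-i)/2}$ transports faithfully under the identification. In RHR1 the exponent is $c = \codim\tau - \codim\sigma$ (computed in the ambient space of $\Sigma$), while in conv1 applied to $\Phi = \pi^{-1}([\sigma])$ it is $c' = \codim\zz_\Phi - \codim\tau$ computed in $\Span\sigma$, using that $\zz_\Phi = \hat\zz$ and $\Phi$ is full-dimensional in $\Span\sigma$. The compensating factor that reconciles these two expressions on primitive pieces comes from the ambient-space dependence of the standard isomorphism in Lemma \ref{lem-dualL}, namely from the $(\det\sigma^\perp)^*$ correction when moving between duality on $V$ and on $\Span\sigma$. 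This compatibility is exactly the content of Lemma \ref{lem-locPD} and the subsequent lemma in the excerpt; unpacking those definitions gives the matching signs and completes the equivalence. The dimension-counting and tracking of this sign is the main, and only substantial, obstacle.
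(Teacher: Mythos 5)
Your overall approach is the same as the paper's: identify $W_\Phi$ with $W_\sigma$ via Lemma~\ref{lem-prec-decomp}, transfer the pairing and quadratic form via Lemma~\ref{lem-locPD}, and pass between the two settings by viewing $\Phi=\pi^{-1}([\sigma])$ as a convex fan in $\Span\sigma$ in one direction and a convex $\Phi$ as a subdivision of the face fan of $|\Phi|$ in the other. One minor inaccuracy: relative strict convexity for the subdivision $\Phi\to[\sigma]$ (with $\sigma=|\Phi|$) is not ``automatic because there is only one nontrivial fibre,'' since $\Phi$ will in general subdivide the boundary faces of $\sigma$ as well. The correct reason is that the restriction of a strictly convex piecewise linear function on a convex fan to a face of its support remains strictly convex on the induced subdivision of that face.

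The substantive gap is your final paragraph. Lemma~\ref{lem-locPD} and the lemma that follows it show that, with compatibly chosen volume forms (the $(\det\sigma^\perp)^*$ twist is precisely what is used to arrange this), the pairing and hence $Q_{\hat{l}}$ on $W_\sigma$ is \emph{literally equal} to the one constructed on $\Phi$ inside $\Span\sigma$; there is no residual determinant factor left over to absorb. Consequently the Hodge--Riemann exponent must be the \emph{same} integer $c$ in both theorems. In Theorem~\ref{thm-RHR1} it is $c=\codim\tau-\codim\sigma=\dim\sigma-\dim\tau$. Your computation from the formula printed in Theorem~\ref{thm-convex1}, $c'=\codim_{\Span\sigma}\zz_\Phi-\codim_{\Span\sigma}\tau=\dim\tau-\dim\hat\zz$, is a different integer, and a determinant line cannot change $c$ modulo $4$, so no ``compensating factor'' can reconcile the two. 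What your computation actually exposes is that the exponent constant in Theorem~\ref{thm-convex1} must simplify to $\dim|\Phi|-\dim\tau$ (consistent with the exponent $n$ in Theorem~\ref{thm-convex} in the pointed case $\tau=\zz_\Phi$). With that reading the constants agree and the equivalence goes through exactly as you outline; as written, however, your last paragraph asserts a reconciliation that does not exist and leaves the sign side of the lemma unjustified.
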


\begin{proof} 
In the situation of Theorem~\ref{thm-RHR1}, fix a cone $\sigma\in\Sigma$ and let $\Phi\subset \hat\Sigma$ be the inverse image of $[\sigma]$. Then $W_\Phi = W_\sigma$ by Lemma~\ref{lem-prec-decomp}. The relatively strictly convex function $\hat{l}$ on $\hat\Sigma$ restricts to a strictly convex function on $\Phi$. The pairing and the quadratic forms $Q_{\hat{l}}$ agree on $W_\Phi$ and $W_\sigma$ by Lemma~\ref{lem-locPD}. Therefore Theorem~\ref{thm-convex1} implies Theorem~\ref{thm-RHR1}. The converse is also true by considering $\Phi$ as a subdivision of the fan $[\sigma]$, where $\sigma=|\Phi|$.
\end{proof}

Since HR relations imply the HL theorem, it follows that Theorem~\ref{thm-convex}(2) implies Theorems 1.1-1.3 in the introduction, as well as their generalizations, Theorems~\ref{thm-RHL1}, \ref{thm-RHR1}, \ref{thm-convex1}.

\subsection{From relative to global Hodge-Riemann bilinear relations.}

Let $\pi:\hat\Sigma \to \Sigma$ be a subdivision, with both fans complete. Let $\cF=\pi_*\cL^\tau(\codim\tau)$ for some $\tau\in\hat\Sigma$. Choose a decomposition $\cF = \oplus_\sigma W_\sigma \otimes \cL^\sigma(\codim\sigma)$. 
Let $l\in\cA^2(\Sigma)$ and $\hat{l} \in \cA^2(\hat\Sigma)$ be such that
\begin{itemize}
  \item The quadratic form $Q_l$ satisfies HR on $IH(\st\sigma)$ for any $\sigma\in\Sigma$ as in Theorem~\ref{thm-HR}.
  \item The quadratic form $Q_{\hat{l}}$ satisfies RHR on $\cF$  as in Theorem~\ref{thm-RHR1}. 
\end{itemize}

As before, define $H^i = \overline{\cF(\Sigma)}^i$ and the graded space $H^*_*$, with
\[ H^i_p = \Gr_p H^i=  \oplus_\sigma W^p_\sigma\otimes IH^{i-p +\codim\sigma}(\st\sigma).\]
We have a non-degenerate pairing on $H^*_*$ and a linear map $l+\hat{l}: H^i_*\to H^{i+2}_*$. Using these, we define the quadratic form $Q_{l+\hat{l}}$ on $H^{-i}_*$ for $i\geq 0$. 

\begin{lemma}
The quadratic form 
\[ (-1)^{\frac{c-i}{2}} Q_{l+\hat{l}},\]
where $c=\codim\tau$, is positive definite on 
\[ \prim_{l+ \hat{l}} H^{-i}_* = \ker (l+\hat{l})^{i+1}: H^{-i}_*\to   H^{i+2}_*\]
for any $i\geq 0$.
\end{lemma}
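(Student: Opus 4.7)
Plan: The proof reduces to a Künneth-type argument, applied summand-by-summand to the $\sigma$-decomposition of $H^*_*$. Fix a decomposition $\cF = \bigoplus_{\sigma} W_\sigma \otimes \cL^\sigma(\codim\sigma)$, so that
\[ H^i_p \;=\; \bigoplus_{\sigma\in\Sigma} W_\sigma^p \otimes IH^{i-p+\codim\sigma}(\st\sigma). \]
The standard isomorphism $\phi: \cG \to \tilde\DD(\cG)$ on $\cG = \cL^\tau(\codim\tau)$ pushes forward to $\pi_*\phi: \cF \to \tilde\DD(\cF)$. The induced pairing on $\Gr\cF$ is block-diagonal with respect to the $\sigma$-decomposition, since morphisms between non-isomorphic indecomposables in $\fM^0$ are forced to respect the grading; on the $\sigma$-block it equals the tensor of the (nondegenerate symmetric) pairing on $W_\sigma$ used in Theorem~\ref{thm-RHR1} with the standard pairing on $\cL^\sigma(\codim\sigma)$, as follows from Lemma~\ref{lem-locPD}. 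Passing to $\overline{(\cdot)(\Sigma)}$, the pairing on $H^*_*$ is $\sigma$-block-diagonal and on each block equals the tensor of the $W_\sigma$-pairing with the \Po pairing on $IH(\st\sigma)$. Under this identification, $l + \hat{l}$ acts on the $\sigma$-block as $\hat{l} \otimes 1 + 1 \otimes l$.

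Next, I apply the HR hypotheses factor-by-factor. By the assumed Theorem~\ref{thm-RHR1}, on $W_\sigma$ the operator $\hat{l}$ is Lefschetz centered at degree $0$ and $(-1)^{(c_\sigma - j)/2} Q_{\hat{l}}$ is positive definite on $\prim_{\hat{l}} W_\sigma^{-j}$, with $c_\sigma = \codim\tau - \codim\sigma$. By the assumed Theorem~\ref{thm-HR}, on $IH(\st\sigma)$ the operator $l$ is Lefschetz centered at degree $\codim\sigma$ with HR sign $(-1)^{(\codim\sigma - k)/2}$ on $\prim_l IH^{\codim\sigma - k}(\st\sigma)$. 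I then invoke the general Künneth formula for HR structures: if $(V_1, L_1, \innprod{\cdot,\cdot}_1)$ and $(V_2, L_2, \innprod{\cdot,\cdot}_2)$ are HR structures centered at degrees $m_1, m_2$ with HR signs $(-1)^{(c_a - j_a)/2}$ on their primitive parts, then $(V_1 \otimes V_2, L_1 + L_2, \innprod{\cdot,\cdot}_1 \otimes \innprod{\cdot,\cdot}_2)$ is a HR structure centered at $m_1 + m_2$ with sign $(-1)^{(c_1 + c_2 - i)/2}$ on its primitive part at degree $m_1 + m_2 - i$. This is the same $\mathfrak{sl}_2$-representation-theoretic statement used in the Künneth formula of Section~\ref{sec-products}. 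Applied to each summand with $c_1 = c_\sigma$, $c_2 = \codim\sigma$, one has $c_1 + c_2 = \codim\tau = c$; the combined Lefschetz center is $\codim\sigma$, which, once we shift to the $H^*_*$-grading, lies at $0$. The HR sign on $\prim_{l+\hat{l}} H^{-i}_*$ therefore equals $(-1)^{(c - i)/2}$. Summing the orthogonal $\sigma$-blocks finishes the proof.

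The main obstacle is the tensor Künneth lemma for HR structures invoked in the second step. It is certainly standard, but it must be formulated abstractly, since one cannot directly quote the Künneth statement for products of complete fans --- $W_\sigma$ is not the intersection cohomology of any fan. Its content is purely representation-theoretic (tensor products of irreducible $\mathfrak{sl}_2$-modules equipped with compatible signed symmetric forms), and the proof proceeds by Lefschetz decomposition on each factor and induction on the combined Lefschetz weight; what takes effort is the careful sign bookkeeping. The shift convention $\cL^\sigma(\codim\sigma)$ adopted throughout Section~\ref{sec-tau} is engineered precisely so that all Lefschetz centers on the $W_\sigma$-factors lie at $0$ and the identity $c_1 + c_2 = c$ falls out cleanly.
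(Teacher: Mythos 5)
Your proof is correct and follows the same route as the paper's: decompose $H^*_*$ as $\bigoplus_\sigma W_\sigma\otimes IH(\st\sigma)$, observe this is an orthogonal decomposition compatible with $Q_{l+\hat{l}}$ and with the action $l+\hat{l} = \hat{l}\otimes 1 + 1\otimes l$, and then invoke the fact that a tensor product of two Hodge--Riemann structures is again a Hodge--Riemann structure. The paper's proof is a three-sentence version of exactly this argument, and (as you correctly note) it implicitly appeals to the abstract, representation-theoretic version of the HR-Künneth lemma rather than the product-of-fans statement from Section~\ref{sec-products} as literally stated --- a point your write-up makes explicit, along with the verification $c_\sigma + \codim\sigma = \codim\tau$ for the sign bookkeeping. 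No gaps.
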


\begin{proof}
The direct sum decomposition $H^*_*=\oplus_\sigma W_\sigma\otimes IH(\st\sigma)$ is orthogonal with respect to the pairing and the quadratic form $Q_{l+\hat{l}}$. On a summand $W_\sigma\otimes IH(\st\sigma)$ the degree $2$ map and the pairing come from degree $2$ maps and parings on each factor. Since by assumption both $Q_{\hat{l}}$ and $Q_l$ satisfy HR, so does $Q_{l+\hat{l}}$.
\end{proof}

Multiplication with $l+\varepsilon^2 \hat{l}$ defines a degree $2$ map on $H^*$. We also have a nondegenerate symmetric bilinear pairing on $H^*$, coming from the standard pairing on $\overline{\cG(\hat\Sigma)}$. Let us use this map and pairing to define the quadratic form $Q_{l+\varepsilon^2 \hat{l}}$ on $H^{-i}$ for $i\geq 0$.

\begin{theorem} \label{thm-rel-global-HR}
 The quadratic form 
\[ (-1)^{\frac{c-i}{2}} Q_{l+\varepsilon^2\hat{l}},\]
where $c=\codim\tau$, is positive definite on 
\[ \prim_{l+ \varepsilon^2 \hat{l}} H^{-i} = \ker (l+\varepsilon^2\hat{l})^{i+1}: H^{-i}\to   H^{i+2}\]
for any $i\geq 0$ and small $\varepsilon\neq 0$. 
\end{theorem}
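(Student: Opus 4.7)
My plan is to mimic the $\alpha$--change of coordinates used for Theorem~\ref{thm-rel-global-HL} and degenerate the whole datum (operator, pairing, quadratic form, primitive subspace) on $H^{*}$ to the graded datum on $H^{*}_{*}$, where the preceding lemma already gives the desired positive definiteness. Fix a decomposition $\cF=\bigoplus_\sigma W_\sigma\otimes\cL^\sigma(\codim\sigma)$, use it to identify $H^{*}\isom H^{*}_{*}$, and let $\alpha:H^{*}\to H^{*}$ multiply elements of $H^{*}_{p}$ by $\varepsilon^{p}$. Exactly as in Theorem~\ref{thm-rel-global-HL}, $l$ commutes with $\alpha$, and $\alpha^{-1}\circ\varepsilon^{2}\hat{l}\circ\alpha=:\hat l_\varepsilon$ has a well-defined limit as $\varepsilon\to 0$, namely the graded operator $\hat{l}$ on $H^{*}_{*}$.

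The new point is that the intersection pairing also degenerates properly. Define the rescaled pairing $\langle h,h'\rangle^\varepsilon:=\langle\alpha h,\alpha h'\rangle$; on $H^{*}_{p}\times H^{*}_{q}$ it equals $\varepsilon^{p+q}\langle h,h'\rangle$. By the lemma on compatibility of the pairing with $\tau_{\leq p}$ (the pairing vanishes on $\tau_{\leq p}\cF(\Sigma)\times\tau_{\leq -p-1}\cF(\Sigma)$), the factor $\langle h,h'\rangle$ is zero whenever $p+q<0$. Hence every term with $p+q\geq 1$ carries a positive power of $\varepsilon$ and vanishes in the limit, while the terms with $p+q=0$ are $\varepsilon$-independent and reproduce the graded pairing on $H^{*}_{*}$. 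Writing $Q^\varepsilon(w):=\langle(l+\hat l_\varepsilon)^{i}w,w\rangle^\varepsilon$ and using $(l+\varepsilon^{2}\hat l)^{i}=\alpha(l+\hat l_\varepsilon)^{i}\alpha^{-1}$, one checks $Q^\varepsilon(w)=Q_{l+\varepsilon^{2}\hat l}(\alpha w)$; so $Q^\varepsilon\to Q_{l+\hat l}$ continuously on $H^{*}_{*}$ as $\varepsilon\to 0$.

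It remains to pass positivity through the limit. Because the RHR hypothesis on $Q_{\hat l}$ implies the RHL hypothesis, Theorem~\ref{thm-rel-global-HL} gives that $l+\varepsilon^{2}\hat l$ (equivalently $l+\hat l_\varepsilon$ under $\alpha$) is a Lefschetz operation on $H^{*}$ for all small $\varepsilon\neq 0$; combined with the HL already established in the preceding lemma at $\varepsilon=0$, the dimensions of $\prim_{l+\hat l_\varepsilon}H^{-i}=\ker(l+\hat l_\varepsilon)^{i+1}$ are constant in a neighborhood of $0$, so these subspaces depend continuously on $\varepsilon$. The previous lemma says $(-1)^{(c-i)/2}Q_{l+\hat l}$ is positive definite on the limit subspace $\prim_{l+\hat l}H^{-i}_{*}$. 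Since positive definiteness of a continuously varying symmetric form on a continuously varying subspace is an open condition, $(-1)^{(c-i)/2}Q^\varepsilon$ is positive definite on $\prim_{l+\hat l_\varepsilon}H^{-i}$ for all sufficiently small $\varepsilon\neq 0$. Transporting back by $\alpha$, which maps $\prim_{l+\hat l_\varepsilon}H^{-i}$ isomorphically to $\prim_{l+\varepsilon^{2}\hat l}H^{-i}$ and pulls $Q_{l+\varepsilon^{2}\hat l}$ back to $Q^\varepsilon$, yields the theorem.

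\textbf{Main obstacle.} The only real work is verifying that the pairing degenerates compatibly with the change of coordinates; this rests entirely on the vanishing of the pairing on $\tau_{\leq p}\times\tau_{\leq -p-1}$, already supplied in the intersection pairing subsection. The constancy of $\dim\prim_{l+\hat l_\varepsilon}H^{-i}$ across $\varepsilon=0$ is the secondary delicate point, but it is exactly what Theorem~\ref{thm-rel-global-HL} (available because RHR $\Rightarrow$ RHL) provides.
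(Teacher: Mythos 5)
Your proof is correct and follows essentially the same degeneration strategy as the paper: the same change of coordinates $\alpha$, the same observation that the rescaled pairing limits to the graded pairing on $H^*_*$ (with your explicit appeal to the vanishing on $\tau_{\leq p}\times\tau_{\leq -p-1}$ making precise what the paper leaves implicit), and the same reduction to the preceding lemma at $\varepsilon=0$. The only divergence is the final openness step, where the paper invokes the signature characterization of HR (nondegeneracy plus fixed signature persists under small perturbation) while you track the primitive subspace directly via constancy of its dimension; both are routine and equivalent.
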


\begin{proof}
As in the proof of Theorem~\ref{thm-rel-global-HL} we consider the change of coordinates isomorphism $\alpha: H^*\to H^*$ that multiplies $H^*_p$ with $\varepsilon^p$. Then the map $L_\varepsilon = \alpha^{-1} \circ (l+ \varepsilon^2\hat{l}) \circ \alpha$ defines a \LO on $H^*$ for $\varepsilon$ small, with $L_0$ being the graded map $l+\hat{l}$ on $H^*_*$.
Similarly, we consider the pairing on $H^*$,  $\langle h,g\rangle = \langle \alpha(h),\alpha(g)\rangle$. This pairing limits to the graded pairing on $H^*_*$ because a pairing between $h\in H^*_{-p}$ and $g\in H^*_{p+i}$ for $i>0$ contains a factor $\varepsilon^i$ and hence goes to zero as $\varepsilon \to 0$. Thus, via the change of coordinates map we have a quadratic form $Q_\varepsilon$ on $H^*$ that limits to the form $Q_{l+\hat{l}}$ on $H^*_*$ as $\varepsilon \to 0$. Since the form $Q_{l+\hat{l}}$ on $H^*_*$ satisfies HR, it is nondegenerate and has the correct signature in each degree. This implies that for small $\varepsilon\neq 0$, $Q_\varepsilon$ also satisfies HR. However,  $Q_\varepsilon = Q_{l+\varepsilon^2\hat{l}}$ via the change of coordinates map $\alpha$.
\end{proof}

Let $l\cdot H^*$ be the image of $l:H^*\to H^*$. Define the quadratic form $Q_{l+\varepsilon^2\hat{l}}$ on $l\cdot H(1)$ by 
\[ Q_{l+\varepsilon^2\hat{l}}(l h) = \langle (l+\varepsilon^2\hat{l})^{i-1} lh, h\rangle, \qquad h\in H^{-i}, i> 0.\]
This form is well-defined (does not depend on how we lift $lh$ to $h$) because the pairing is $\cA(\Sigma)$-bilinear.

\begin{theorem}\label{thm-complete-HR}
 The quadratic form 
\[ (-1)^{\frac{c-i}{2}} Q_{l+\varepsilon^2\hat{l}},\]
 where $c=\codim\tau$, is positive definite on 
\[ \prim_{l+ \varepsilon^2 \hat{l}} l\cdot H^{-i} = \ker (l+\varepsilon^2\hat{l})^{i}: l\cdot H^{-i}\to   l\cdot H^{i}\]
for any $i\geq 0$ and small $\varepsilon\neq 0$. 
\end{theorem}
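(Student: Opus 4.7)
The proof follows the pattern of Theorem~\ref{thm-rel-global-HR}, which in turn paralleled Theorem~\ref{thm-rel-global-HL}: first establish the HR statement on the graded object $l\cdot H^*_*$, then transport it to $l\cdot H^*$ via the change-of-coordinates isomorphism $\alpha:H^*\to H^*$ that multiplies $H^*_p$ by $\varepsilon^p$. To begin, one checks that $Q_{l+\varepsilon^2\hat{l}}$ is well-defined on $l\cdot H$. Both $l$ and $\hat{l}$ are self-adjoint for the pairing induced by $\pi_*\phi$: the pairing is $\cA(\Sigma)$-bilinear (self-adjointness of $l$) and, as the standard pairing on $\cL^\tau(\hat\Sigma)$, is also $\cA(\hat\Sigma)$-bilinear (self-adjointness of $\hat{l}$). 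Hence, using that $l$ commutes with $l+\varepsilon^2\hat{l}$, when $l(h-h')=0$ one has
\[ \langle (l+\varepsilon^2\hat{l})^{i-1}lh,\,h-h'\rangle = \langle h,\,(l+\varepsilon^2\hat{l})^{i-1}l(h-h')\rangle = 0.\]

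For the graded step, use the decomposition $H^*_*=\bigoplus_\sigma W_\sigma\otimes IH(\st\sigma)$, in which $l$ acts as $\Id\otimes l$, $\hat{l}$ acts as $\hat{l}\otimes\Id$, and the graded pairing is a direct sum of tensor pairings. Then $l\cdot H^*_*=\bigoplus_\sigma W_\sigma\otimes(l\cdot IH(\st\sigma))$, and $l+\hat{l}$ acts on each summand as $\hat{l}\otimes\Id+\Id\otimes l$. Here $\hat{l}$ on $W_\sigma$ satisfies HR by Theorem~\ref{thm-RHR1}, while $l$ on $l\cdot IH(\st\sigma)$ with the induced form satisfies HR by a purely algebraic consequence of Theorem~\ref{thm-HR}: for any Lefschetz module $(V,L,\langle\cdot,\cdot\rangle)$ satisfying HR, the image $L\cdot V$ inherits a Lefschetz structure, obtained from the Lefschetz decomposition of $V$ by dropping the bottom primitive components, and the induced form $Q(Lh)=\langle L^{i-1}Lh,h\rangle=\langle L^i h,h\rangle$ inherits the correct HR signs on the surviving primitive parts. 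The K\"unneth-type preservation of HR under tensor product with the sum operator (an algebraic counterpart of the product-of-fans lemma from Subsection~\ref{sec-products}) then yields HR for $l+\hat{l}$ on each summand, and hence on $l\cdot H^*_*$, with the asserted signs $(-1)^{(c-i)/2}$ on primitive subspaces of total degree $-i$, where $c=\codim\tau$.

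The degeneration is formally identical to that in the proof of Theorem~\ref{thm-rel-global-HR}. Since $l$ preserves the filtration $\tau_{\leq p}$, the map $\alpha$ commutes with $l$ and hence restricts to an automorphism of $l\cdot H^*$. The conjugated operator $\alpha^{-1}\circ(l+\varepsilon^2\hat{l})\circ\alpha$ on $l\cdot H^*$ limits to the graded operator $l+\hat{l}$ on $l\cdot H^*_*$ as $\varepsilon\to 0$, and the conjugated pairing $\langle\alpha(\cdot),\alpha(\cdot)\rangle$ limits to the graded pairing: a pairing between $h\in H^*_{-p}$ and $g\in H^*_{p+j}$ carries a factor $\varepsilon^j$ with $j\geq 0$ (the perverse filtration forces vanishing for $j<0$) and survives the limit only when $j=0$. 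Thus $Q_{l+\varepsilon^2\hat{l}}$, read through $\alpha$, limits to $Q_{l+\hat{l}}$ on $l\cdot H^*_*$. Since nondegeneracy with a prescribed signature on each finite-dimensional primitive subspace is an open condition, HR at $\varepsilon=0$ persists to all sufficiently small $\varepsilon\neq 0$.

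The main obstacle is the algebraic step in the second paragraph: tracking the HR signs carefully through the combination of (i) passing from HR on $IH(\st\sigma)$ for $l$ to HR on $l\cdot IH(\st\sigma)$ with the correct grading shift, and (ii) taking the tensor product with the HR structure on $W_\sigma$. Only when these signs combine to match the asserted $(-1)^{(c-i)/2}$ is the proof complete; the remaining steps (well-definedness, degeneration) are essentially formal adaptations of the proofs of Theorems~\ref{thm-rel-global-HL} and~\ref{thm-rel-global-HR}.
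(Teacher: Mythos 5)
Your proof is correct and follows essentially the same route as the paper's: first establish HR for $Q_{l+\hat{l}}$ on the graded object $l\cdot H^*_*(1)$ by reducing on each summand $W_\sigma\otimes IH(\st\sigma)$ to the algebraic fact that HR on $IH$ for $l$ descends to HR on $(l\cdot IH)(1)$, then run the same $\varepsilon$-degeneration as in Theorem~\ref{thm-rel-global-HR} with $H^*$ replaced by $l\cdot H^*(1)$. The paper records the well-definedness of $Q_{l+\varepsilon^2\hat{l}}$ just before the theorem statement and compresses the graded step into a single sentence; your version makes the auxiliary observations explicit (self-adjointness of both $l$ and $\hat{l}$, compatibility of the rescaling $\alpha$ with $l$, and a sketch of the Lefschetz-decomposition argument behind the descent to $l\cdot IH(1)$), but the underlying argument is the same.
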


\begin{proof}
We first check that $Q_{l+\hat{l}}$ satisfies HR on the graded space $l\cdot H^*_*(1)$. This reduces to the fact that if $Q_l$ satisfies HR on a cohomology space $IH$, then $Q_l$ also satisfies HR on $(l\cdot IH)(1)$.
Now we repeat the proof of the previous theorem with $H^*$ replaced by $l\cdot H^{*}(1)$.
\end{proof}

Theorems~\ref{thm-complete-HL} and \ref{thm-complete-HR} can be compared to Theorem~\ref{thm-complete} in the case where $\hat\Sigma$ is pointed and $\cF= \pi_*\cL_{\hat\Sigma}(\codim\hat\zz)$. Theorem~\ref{thm-complete} states that any strictly convex $\hat{l}$ satisfies HL and HR on $l\cdot IH(\hat\Sigma)$. Theorems~\ref{thm-complete-HL} and \ref{thm-complete-HR} state that this is true for strictly convex functions that are close to $l$, namely functions of the form $l+\varepsilon^2 \hat{l}$, where $\hat{l}$ is relatively strictly convex. With the argument that we used above we can not yet prove Theorem~\ref{thm-complete} in general.

\begin{remark}
The RHL and RHR theorems become especially simple in the case of a semi-small subdivision. Consider a subdivision $\pi:\hat\Sigma\to\Sigma$, where both fans are pointed and $\hat\Sigma$ is simplicial. Then $\pi$ is called semi-small if $\dim\pi(\sigma) \leq 2\dim\sigma$ for any $\sigma\in\hat\Sigma$; it is called small if strict inequality holds for any nonzero $\sigma\in\hat\Sigma$.

If $\pi:\hat\Sigma\to\Sigma$ is semi-small then $\pi_* \cL_{\hat\Sigma} (\codim\hat\zz)$ lies in $\fM^0(\Sigma)$. This can be seen as follows. Let $\pi_* \cL_{\hat\Sigma} (\codim\hat\zz) = \oplus_\tau W_\tau\otimes \cL^\tau(\codim\tau)$, where 
\[ W_\tau = \img \big(IH(\hat\tau,\partial\hat\tau)\to IH(\hat\tau)\big).\]
The space $IH(\hat\tau,\partial\hat\tau)$ is generated by elements $\chi_\sigma$, where $\pi(\sigma) = \tau$. Since $\deg\chi_\sigma = 2 \dim\sigma \geq \dim\tau$, this implies that $W_\tau$ is zero in negative degrees. By \Po duality it is also zero in positive degrees, hence $W_\tau=W_\tau^0$. A similar argument shows that if $\pi$ is small then $\pi_* \cL_{\hat\Sigma} = \cL_{\Sigma}$.

Since $\pi_* \cL_{\hat\Sigma} (\codim\hat\zz)$ lies in $\fM^0(\Sigma)$, the RHL theorem trivially holds for it, hence a strictly convex $l$ on $\Sigma$ defines a \LO on $IH(\hat\Sigma)$. The RHR relations for $\pi$ imply that the intersection pairing on $W_\tau$ is definite and then $Q_l$ satisfies Hodge-Riemann bilinear relations on $IH(\hat\Sigma)$. However, the fact that the intersection pairing on $W_\tau$ is definite requires the existence of a strictly relatively convex $\hat{l}$, even though this $\hat{l}$ does not enter into the statement. The proof of the RHR theorem below also requires the existence of such $\hat{l}$.
\end{remark}

\section{Proofs of RHL and RHR theorems.}

In this section we prove Theorems~\ref{thm-RHL}-\ref{thm-complete} in the introduction using induction on dimension. The first three of these theorems were generalized to non-pointed fans and more general sheaves in Theorems \ref{thm-RHL1}, \ref{thm-RHR1}, \ref{thm-convex1}, but these generalizations were shown to be equivalent to the theorems in the introduction.

Since HR implies HL, we get from Lemma~\ref{lem-phi-red} that part (2) of Theorem~\ref{thm-convex} implies Theorems~\ref{thm-RHL}-\ref{thm-convex}. Thus, it remains to prove Theorem~\ref{thm-convex}(2) and Theorem~\ref{thm-complete}. We do this by the following steps:
\begin{itemize}
  \item[Step 1.]  Show that Theorem~\ref{thm-convex} in dimension $n$ follows from Theorem~\ref{thm-complete} in dimension $n-1$.
  \item[Step 2.] Reduce Theorem~\ref{thm-complete} to the case where $\hat\Sigma$ is simplicial.
  \item[Step 3.] Prove Theorem~\ref{thm-complete} for simplicial fans $\hat\Sigma$. 
\end{itemize}

\subsection{Reduction to Theorem~\ref{thm-complete} in dimension $n-1$.}

Let $\Phi$ be a pointed convex fan as in Theorem~\ref{thm-convex}. We complete $\Phi$ to $\Phi^c$ by choosing a ray $\rho$ such that $-\rho \in \Int|\Phi|$ and adding to $\Phi$ cones $\rho + \sigma$, where $\sigma\in\partial\Phi$. We may assume that $\hat{l}$ is positive on $|\Phi|\setmin \{0\}$. Then the extension of $\hat{l}$ to $\Phi^c$ so that $\hat{l}$ is zero on $\rho$ is strictly convex on $\Phi^c$.

The idea of the reduction is the following. Given $h\in IH^{n-i}(\Phi,\partial\Phi)$ such that its image in $W_\Phi$ is primitive, we can consider $h$ as an element in $IH^{n-i} (\Phi^c)$ 
and compare $Q_{\hat{l}} (h)$ in $W_\Phi$ with $Q_{\hat{l}}(h)$ in $IH(\Phi^c)$. 
The problem is that $h$ may not be primitive in $IH^{n-i} (\Phi^c)$. 
However, $h+\chi_\rho g$ is primitive for a suitable $g\in IH^{n-i-2}([\st\rho]) = IH^{n-i-2}(\hat\Sigma)$. Thus, when computing $Q_{\hat{l}} (h)$, we get an extra term involving $g$. 

Note that $\Phi^c$ has a local product structure at $\rho$ (recall the definition of local product structure in Section~\ref{sec-products}). Let $\hat\Sigma$ be the image of $\st\rho$ by the projection from $\Span\rho$. Since $\hat{l}$ vanishes on $\rho$, it is the pullback of a strictly convex function on $\hat\Sigma$. As in Theorem~\ref{lem-loc-mult}, we choose $\chi_\rho$ such that the \Po pairings on $\hat\Sigma$ and $[\st\rho]$ satisfy
\[ \langle\cdot,\cdot\rangle_{\hat\Sigma} = \langle\chi_\rho \cdot,\cdot\rangle_{[\st\rho]}.\]
Let $l\in \cA^2(\hat\Sigma)$ be such that its pullback to $[\st\rho]$ agrees with $\chi_\rho$ up to a global linear function. Since $\Phi$ is convex, it follows that $l$ is also convex, with graph equal to $\partial\Phi$.

\begin{lemma}
With notation as in Theorems~\ref{thm-convex}-\ref{thm-complete}, let $h\in IH^{n-i}(\Phi,\partial\Phi) \subset IH^{n-i} (\Phi^c)$ be such that its image in $IH^{n-i}(\Phi)$ lies in $\prim_{\hat{l}} W_\Phi$. Then there exists $g\in IH^{n-i-2}(\hat\Sigma)$ such that
\begin{enumerate}
  \item $l\cdot g \in \prim_{\hat{l}} l\cdot IH^{n-i-2}(\hat\Sigma).$
  \item $h+\chi_\rho g \in \prim_{\hat{l}}  IH^{n-i}(\Phi^c).$
  \item $Q_{\hat{l}}(h+\chi_\rho g) = Q_{\hat{l}}(h)  +  Q_{\hat{l}}(l\cdot g)$.\\
  Here the the three quadratic forms $Q_{\hat{l}}$ are defined on $IH(\Phi^c)$, $W_\Phi$ and $l\cdot IH(\hat\Sigma)$, respectively.
\end{enumerate}
\end{lemma}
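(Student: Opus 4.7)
The plan is to construct $g$ by exploiting two Mayer--Vietoris short exact sequences coming from the decomposition $\Phi^c = \Phi \cup [\st\rho]$ along the common boundary $\partial\Phi = \partial[\st\rho]$. Since $\cL_{\Phi^c}$ restricts to $\cL_\Phi$ on $\Phi$ and to $\cL_{[\st\rho]}$ on $[\st\rho]$ (by minimality of the IC sheaves), splitting sections over the two pieces and reducing mod $m$ yields
\[ 0 \to IH(\Phi,\partial\Phi) \to IH(\Phi^c) \to IH([\st\rho]) \to 0 \]
together with the analogous sequence with $\Phi$ and $[\st\rho]$ swapped. The key observation underlying everything is that $l$, pulled back from $\hat\Sigma$, and $\chi_\rho$ differ on $[\st\rho]$ by a global linear function, and therefore act identically on $IH([\st\rho])$; via Lemma~\ref{lem-loc-mult} this turns $\chi_\rho$ into $l$ under the identification $IH([\st\rho]) \isom IH(\hat\Sigma)$.

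Primitivity of the image $\bar h$ in $W_\Phi$ gives $\hat{l}^{i+1}\bar h = 0$ in $IH^{n+i+2}(\Phi)$, so $\hat{l}^{i+1}h \in IH(\Phi^c)$ has zero image under the restriction $IH(\Phi^c) \to IH(\Phi)$. By the second sequence above, $\hat{l}^{i+1}h$ lifts uniquely to some $\tilde g \in IH^{n+i+2}([\st\rho],\partial[\st\rho])$. Using the isomorphism $\chi_\rho \colon IH^{n+i}([\st\rho]) \to IH^{n+i+2}([\st\rho],\partial[\st\rho])$ from Lemma~\ref{lem-loc-mult}, write $\tilde g = \chi_\rho u$ with $u \in IH^{n+i}(\hat\Sigma)$, and solve $\hat{l}^{i+1}g' = u$ uniquely by Hard Lefschetz on the $(n-1)$-dimensional complete fan $\hat\Sigma$. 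Set $g = -g'$. Claim (2) is then immediate: $\hat{l}^{i+1}(h+\chi_\rho g) = \hat{l}^{i+1}h - \chi_\rho u = \hat{l}^{i+1}h - \tilde g = 0$ in $IH(\Phi^c)$.

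For claim (1) the crucial point, and the main technical subtlety, is this: the section $\hat{l}^{i+1}h$ (extended by zero) vanishes on all of $[\st\rho]$, so its image under the first Mayer--Vietoris map $IH(\Phi^c) \to IH([\st\rho])$ is zero. Since this image equals the forget-boundary image of $\tilde g = \chi_\rho u$, which under $\chi_\rho \leftrightarrow l$ becomes $lu \in IH(\hat\Sigma)$, we conclude $lu = 0$. Consequently $\hat{l}^{i+1}(lg) = -l\,\hat{l}^{i+1}g' = -lu = 0$, i.e.\ $lg$ is primitive in $l\cdot IH(\hat\Sigma)$. This is the place where the $\chi_\rho$--$l$ identification does the real work, converting information about restriction in $\Phi^c$ into the primitivity condition in $\hat\Sigma$.

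For claim (3), expand $Q_{\hat l}(h+\chi_\rho g) = \langle \hat l^i(h+\chi_\rho g),\, h+\chi_\rho g\rangle_{\Phi^c}$ bilinearly. The term $\langle \hat l^i h, h\rangle_{\Phi^c}$ equals $Q_{\hat l}(h)$ by compatibility of \Po pairings under the equidimensional inclusion of quasi-convex fans $\Phi \subset \Phi^c$ recorded in Section~3.1. The cross term vanishes by the disjoint-supports property, since $\hat l^i h$ and $\chi_\rho g$, viewed as extensions by zero, are supported in $\Int|\Phi|$ and $\Int|[\st\rho]|$ respectively. The remaining term, using compatibility for $[\st\rho] \subset \Phi^c$ and Lemma~\ref{lem-loc-mult}(3), computes as
\[ \langle \hat l^i \chi_\rho g,\, \chi_\rho g\rangle_{\Phi^c} = \langle \chi_\rho g,\, \hat l^i \chi_\rho g\rangle_{[\st\rho]} = \langle g,\, \hat l^i l\, g\rangle_{\hat\Sigma} = Q_{\hat l}(lg), \]
using once more that $\chi_\rho$ and $l$ coincide on $IH([\st\rho])$ to rewrite $\hat l^i \chi_\rho g$ as $\hat l^i l\, g$ inside $IH(\hat\Sigma)$. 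This finishes the identity.
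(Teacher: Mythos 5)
Your proof is correct and takes essentially the same route as the paper: you decompose $IH(\Phi^c)$ via the two exact sequences (which the paper uses implicitly rather than naming them Mayer--Vietoris), solve for $g$ using Hard Lefschetz on the $(n-1)$-dimensional fan $\hat\Sigma$, and deduce (1) and (3) from the vanishing of $\hat{l}^{i+1}h$ on $[\st\rho]$, the $\chi_\rho\!\leftrightarrow\! l$ identification, and the disjoint-support/compatibility properties of the Poincar\'e pairing. Your write-up is somewhat more detailed than the paper's, particularly in the bilinear expansion for (3), but the underlying ideas coincide.
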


\begin{proof}
Since $\hat{l}^{i+1} h$ vanishes in $IH(\Phi)$, it follows that when we consider $h$ as an element of $IH(\Phi^c)$ then 
\[ \hat{l}^{i+1} h \in IH^{n+i+2}([\st\rho],\partial [\st\rho]) = \chi_\rho IH^{n+i}(\hat\Sigma).\]
By HL applied to $IH(\hat\Sigma)$, the map 
\[ \hat{l}^{i+1}: IH^{n-i-2}(\hat\Sigma) \to IH^{n+i}(\hat\Sigma) \]
is an isomorphism. Hence we can find $g$ that satisfies (2):
\[  \hat{l}^{i+1} (h+\chi_\rho g) = 0  \text{ in $IH(\Phi^c)$}. \]

Note that $\hat{l}^{i+1} (\chi_\rho g) = -\hat{l}^{i+1} (h)$ is supported on $\Phi$, hence $\hat{l}^{i+1} (\chi_\rho g) = 0$ in $IH([\st\rho]) = IH(\hat\Sigma)$. This implies (1).  

Part (3) follows from the fact that $\hat{l}^{i+1} (\chi_\rho g)$ is supported on $[\st\rho]$ and $\hat{l}^{i+1}(h)$ is supported on $\Phi$. Hence $\chi_\rho g$ and $h$ are orthogonal with respect to $Q_{\hat{l}}(h)$ on $IH(\Phi^c)$.
\end{proof}

\begin{lemma} If Theorem~\ref{thm-complete} holds for $\hat\Sigma$, then Theorem~\ref{thm-convex}(2) holds for $\Phi$.
\end{lemma}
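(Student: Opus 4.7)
The plan is to pick a preimage $h$ of $w$ in $IH(\Phi,\partial\Phi)$, use the previous lemma to produce the auxiliary class $g$ on $\hat\Sigma$, and then estimate the two terms on the right-hand side of identity (3) using (a) the classical Hodge-Riemann relations on the complete $n$-dimensional fan $\Phi^c$ and (b) the inductive hypothesis, Theorem~\ref{thm-complete} applied to the $(n-1)$-dimensional fan $\hat\Sigma$.

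Concretely, given a nonzero $w\in\prim_{\hat l}W_\Phi^{n-i}$, I lift it to a nonzero $h\in IH^{n-i}(\Phi,\partial\Phi)$, noting that $Q_{\hat l}(w)$ is computed as $\langle\hat l^i h,h\rangle$ via the pairing of Lemma~\ref{lem-locPD} and hence agrees with $\langle\hat l^i h,h\rangle_{\Phi^c}$ by the compatibility of intersection pairings under the inclusion $\Phi\subset\Phi^c$. The previous lemma furnishes $g\in IH^{n-i-2}(\hat\Sigma)$ making (1)-(3) hold. By Theorem~\ref{thm-HR} applied to $\hat l$ on $\Phi^c$, the class $h+\chi_\rho g$, which is primitive by~(2), satisfies
\[
(-1)^{(n-i)/2}Q_{\hat l}(h+\chi_\rho g)\;\ge\;0,
\]
with equality iff $h+\chi_\rho g=0$. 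For the second term, I apply Theorem~\ref{thm-complete} to $\hat\Sigma$ of dimension $n-1$ with the index $i'=i+1$: the class $lg$ sits in $W^{(n-1)-i'+2}=W^{n-i}$, is primitive by~(1), and the sign in front of the positive-definite quadratic form becomes $(-1)^{((n-1)-i')/2}=(-1)^{(n-i)/2-1}=-(-1)^{(n-i)/2}$. Hence
\[
-(-1)^{(n-i)/2}Q_{\hat l}(l\cdot g)\;\ge\;0,
\]
with equality iff $lg=0$.

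Substituting these two estimates into the rearrangement
\[
(-1)^{(n-i)/2}Q_{\hat l}(h)\;=\;(-1)^{(n-i)/2}Q_{\hat l}(h+\chi_\rho g)\;-\;(-1)^{(n-i)/2}Q_{\hat l}(l\cdot g)
\]
of (3), the right-hand side is a sum of two non-negative quantities, hence $\ge 0$. Equality would force both $h+\chi_\rho g=0$ and $lg=0$; but under the identifications of Lemma~\ref{lem-loc-mult}, multiplication by $l$ on $IH(\hat\Sigma)$ corresponds to multiplication by $\chi_\rho$ on $IH([\st\rho])$, so $lg=0$ implies $\chi_\rho g=0$ in $IH(\Phi^c)$, and then $h=0$, contradicting $w\ne 0$. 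Thus $(-1)^{(n-i)/2}Q_{\hat l}(w)>0$, which is the desired positive-definiteness on $\prim_{\hat l}W_\Phi^{n-i}$.

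The main obstacle is the sign bookkeeping: the entire argument relies on the observation that when Theorem~\ref{thm-complete} is invoked in dimension $n-1$ with the shifted index $i'=i+1$, its sign is exactly opposite to that of Hodge-Riemann on $\Phi^c$, converting the subtraction coming from (3) into an addition of non-negative contributions. Once this parity is verified, the rigidity in the equality case follows from the fact that $\chi_\rho$ acts as an isomorphism between $IH([\st\rho])$ and $IH([\st\rho],\partial[\st\rho])$, so vanishing of $lg$ forces vanishing of $\chi_\rho g$ and hence of $h$.
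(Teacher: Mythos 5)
Your argument follows the same route as the paper: lift $w$ to $h\in IH^{n-i}(\Phi,\partial\Phi)$, invoke the previous lemma to produce $g$, and split identity~(3) into an $\Phi^c$-term controlled by HR and an $\hat\Sigma$-term controlled by the inductive Theorem~\ref{thm-complete}. The sign bookkeeping with $i'=i+1$ in dimension $n-1$ is exactly the point, and you have it right: $(-1)^{((n-1)-(i+1))/2}=-(-1)^{(n-i)/2}$ turns the subtraction into a sum of two non-negative terms. So the inequality $(-1)^{(n-i)/2}Q_{\hat l}(w)\ge 0$ is correctly established.

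There is, however, a genuine gap in your equality case. You claim that $lg=0$ in $IH(\hat\Sigma)$ forces $\chi_\rho g=0$ in $IH(\Phi^c)$, by appealing to Lemma~\ref{lem-loc-mult}. But these two maps are not the same map. Multiplication by $\chi_\rho$ is an \emph{isomorphism} $IH([\st\rho])\to IH([\st\rho],\partial[\st\rho])$, so $\chi_\rho g=0$ in $IH(\Phi^c)$ would force $g=0$. On the other hand, multiplication by $l$ on $IH(\hat\Sigma)$ is the \emph{composite} $IH([\st\rho])\xrightarrow{\chi_\rho}IH([\st\rho],\partial[\st\rho])\to IH([\st\rho])$, and the second arrow is not injective; equivalently, $l$ is only convex (not strictly convex) on $\hat\Sigma$, so $l\cdot\colon IH(\hat\Sigma)\to IH(\hat\Sigma)$ has a nontrivial kernel. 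Thus $lg=0$ does \emph{not} imply $g=0$, and your conclusion $\chi_\rho g=0$, hence $h=0$, does not follow.

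The fix is easier and does not use the vanishing of the second term at all. If equality holds then the $\Phi^c$-term alone vanishes, giving $h=-\chi_\rho g$ in $IH(\Phi^c)$. The section $\chi_\rho g$ is supported on $[\st\rho]$ and, being a multiple of $\chi_\rho$, vanishes on $\partial[\st\rho]=\partial\Phi=[\st\rho]\cap\Phi$; so its restriction to $\Phi$ is zero. Hence $h$ restricts to zero in $IH(\Phi)$, which is precisely the statement that the image $w$ of $h$ in $W_\Phi$ vanishes. That yields positive-definiteness on $\prim_{\hat l}W_\Phi^{n-i}$ (not the stronger claim $h=0$ in $IH(\Phi,\partial\Phi)$, which is neither needed nor generally true).
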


\begin{proof}
With notation as in the previous lemma, we need to show that 
\[ (-1)^{\frac{n-i}{2}} Q_{\hat{l}}(h) \geq 0 \]
and equality holds only if $h$ is zero in $IH(\Phi)$. From part (3) of the previous lemma we get
\[ (-1)^{\frac{n-i}{2}}Q_{\hat{l}}(h) = (-1)^{\frac{n-i}{2}}Q_{\hat{l}}(h+\chi_\rho g) -  (-1)^{\frac{n-i}{2}}Q_{\hat{l}}(l\cdot g). \]
The first term on the right hand side is non-negative by HR applied to $\Phi^c$. Moreover, it is zero only if $h=-\chi_\rho g$, which implies that $h$ is zero in $IH(\Phi)$. The second term on the right hand side is non-negative by applying part (2) of Theorem~\ref{thm-complete} to $l\cdot g$. Indeed,
\[ -  (-1)^{\frac{n-i}{2}}Q_{\hat{l}}(l\cdot g) = (-1)^{\frac{n-i-2}{2}}Q_{\hat{l}}(l\cdot g) \geq 0.\]
\end{proof}

Notice that if $\Phi$ has dimension $n$, then $\hat\Sigma$ has dimension $n-1$, hence by induction we may assume that $\hat\Sigma$ satisfies Theorem~\ref{thm-complete}. It remains to prove Theorem~\ref{thm-complete} in dimension $n$, assuming the other theorems in dimension $n$ and less. More precisely, let $\Sigma$ be the fan on which $l$ is strictly convex. Then $\hat\Sigma$ is a subdivision of $\Sigma$, and we may assume that Theorems~\ref{thm-RHL} and \ref{thm-RHR} hold for this subdivision. 

Part (2) of Theorem~\ref{thm-complete} implies part (1) of the same theorem. We show that the two parts are equivalent. (The proof of this equivalence is the only place where we use Theorems~\ref{thm-RHL} and \ref{thm-RHR} in the proof of Theorem~\ref{thm-complete}.)

\begin{lemma}\label{lem-equiv2}
In Theorem~\ref{thm-complete} part (1) implies part (2). 
\end{lemma}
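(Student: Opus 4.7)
The plan is to propagate the Hodge--Riemann bilinear relations from the operator $L_{\varepsilon}=l+\varepsilon^{2}\hat l$, where HR has already been established for small $\varepsilon>0$ by Theorem~\ref{thm-complete-HR}, to the operator $\hat l$ itself, using a one-parameter family of strictly convex piecewise linear functions on $\hat\Sigma$ joining the two. Since Hodge--Riemann positivity is invariant under positive rescaling of the Lefschetz operator, Theorem~\ref{thm-complete-HR} applied to $l+\varepsilon^{2}\hat l$ is equivalent, after multiplying by $\varepsilon^{-2}$ and writing $s=\varepsilon^{-2}$, to HR for the operator $L_{s}=\hat l+sl$ on $W=l\cdot IH(\hat\Sigma)$ for all sufficiently large $s$. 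Thus I will study the family $L_{s}=\hat l+sl$ for $s\in[0,\infty)$: at $s=0$ this is $\hat l$, which is Lefschetz on $W$ by the hypothesis of part~(1); for large $s$ it is Lefschetz on $W$ and satisfies HR by Theorems~\ref{thm-complete-HL} and \ref{thm-complete-HR} (after rescaling).

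Next I use continuity of signature. The operator $L_{s}^{i-1}\colon W^{n-i+2}\to W^{n+i}$ has matrix entries depending polynomially on $s$, and by part~(1) together with Theorem~\ref{thm-complete-HL} its determinant is a nonzero polynomial in $s$ (it is nonzero at $s=0$ and for $s\to\infty$). Hence Lefschetz holds on the complement of a finite set $F\subset(0,\infty)$. On $[0,\infty)\setminus F$ the primitive dimensions
\[
\dim\prim_{L_{s}}W^{n-i+2-2j}=\dim W^{n-i+2-2j}-\dim W^{n-i-2j}
\]
depend only on $W$, and the quadratic form $Q_{L_{s}}$ varies continuously; therefore the signature of $Q_{L_{s}}$ on $W^{n-i+2}$ is locally constant on the Lefschetz locus. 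At large $s$ the signature equals the Hodge--Riemann value $\sigma_{\rm HR}$ dictated by these primitive dimensions; the task is to transport this value across $F$ to $s=0$, at which point part~(1) gives nondegeneracy of $Q_{\hat l}$ and matching dimensions of primitive parts, so that the signature on $\prim_{\hat l}W^{n-i+2}$ is forced to agree with $\sigma_{\rm HR}$, yielding part~(2).

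The main obstacle is to rule out a jump in signature as $s$ crosses a parameter in $F$. I will handle this by combining the polynomial structure of the family with a genericity perturbation: replace the family $\hat l+sl$ by $\hat l+sl+sl'$, where $l'$ ranges over a small open set of convex functions, and observe that both endpoints ($s=0$, and the large $s$ limit compared to Theorem~\ref{thm-complete-HR} applied with $l$ replaced by $l+l'$) remain in the Lefschetz locus with signature $\sigma_{\rm HR}$. For generic $l'$ the determinant polynomial has no real positive roots, so the Lefschetz locus of the perturbed family is all of $[0,\infty)$, forcing the signature to be globally constant and equal to $\sigma_{\rm HR}$; letting $l'\to 0$ and invoking semicontinuity of signature then transfers this conclusion to the original family at $s=0$. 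This combined argument, which is analogous to the degeneration techniques already employed in the proofs of Theorems~\ref{thm-complete-HL} and~\ref{thm-complete-HR}, supplies the required equivalence.
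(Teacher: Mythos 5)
Your proof is based on the right basic mechanism — continuity of the signature of $Q$ along a path of strictly convex functions connecting $\hat l$ to the one point $l+\varepsilon^{2}\hat l$ where Theorem~\ref{thm-complete-HR} already gives HR — but it misses the one observation that closes the argument, and then introduces an unjustified step to compensate.

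The missed observation: part~(1) of Theorem~\ref{thm-complete} is a statement about \emph{every} strictly convex piecewise linear function on $\hat\Sigma$, and your $L_{s}=\hat l+sl$ is strictly convex on $\hat\Sigma$ for \emph{every} $s\ge 0$ (it is a sum of the strictly convex $\hat l$ and the convex $sl$). Hence part~(1) applies directly to each $L_{s}$ and gives nondegeneracy of $Q_{L_{s}}$ on $W$ for all $s\ge 0$; your ``finite bad set $F$'' is in fact empty, and the whole difficulty you then try to repair never arises. This is exactly the paper's route: fix $l$, vary $\hat l$ over the \emph{connected} set of all strictly convex functions on $\hat\Sigma$, use part~(1) to get nondegeneracy of $Q_{\hat l}$ on $W$ throughout and hence constant signature, and read off that signature at the single point $\hat l=l+\varepsilon^{2}\tilde l$ (with $\tilde l$ relatively strictly convex) where Theorem~\ref{thm-complete-HR} already gives HR. Having missed this, you appeal to a genericity perturbation and assert that ``for generic $l'$ the determinant polynomial has no real positive roots''; this claim is not justified and does not follow from anything stated — a generic small real perturbation of a real polynomial need not remove its positive real roots — so as written the repair is a genuine gap. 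Remove the perturbation argument and instead invoke part~(1) along the entire family (or, more simply, along the connected space of all strictly convex $\hat l$ as the paper does), and the proof is correct.
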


\begin{proof}
We fix the function $l$, but vary $\hat{l}$ in the connected set of all strictly convex functions on $\hat\Sigma$. By part (1) of the theorem, the quadratic form $Q_{\hat{l}}$ is nondegenerate on $l\cdot IH^i(\hat\Sigma)$ and hence has the same signature for all $\hat{l}$. This means that to prove part (2) of the theorem for all $\hat{l}$, it suffices to prove it for one strictly convex $\hat{l}$. By Theorem~\ref{thm-complete-HR}, we know part (2) of the theorem for strictly convex functions of the form $l+\varepsilon^2\tilde{l}$, where $\tilde{l}$ is relatively strictly convex and $\varepsilon\neq 0$ is small.
\end{proof}

\subsection{Proof of Theorem~\ref{thm-complete} for simplicial fans.}

We start with a more general situation where the fan may be nonsimplicial. 

\begin{lemma} \label{lem-mcmullen}
With notation as in Theorem~\ref{thm-complete}, assume that there is a cone $\tau\in\hat\Sigma$ such that $\hat\Sigma$ has a local product structure at every ray $\rho\in\hat\Sigma\setmin[\tau]$. Suppose some $l h$ violates part (1) of the theorem:
\[ lh \in \ker l^{i-1}: l\cdot IH^{n-i}(\hat\Sigma) \to  l\cdot IH^{n+i-2}(\hat\Sigma).\]
Then $lh$ is zero in $IH([\st\rho])$ for any $\rho\in \hat\Sigma\setmin[\tau]$.
\end{lemma}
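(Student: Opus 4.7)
My plan is to combine Poincar\'e duality on the quasi-convex fan $[\st\rho]$ with the local product structure at $\rho$ in order to reduce the claimed vanishing to a pairing identity in $IH(\hat\Sigma)$ that is immediate from the hypothesis $l^{i-1}(lh) = l^i h = 0$. Concretely, by Poincar\'e duality on $[\st\rho]$ and the compatibility of the intersection pairing with the inclusion $IH([\st\rho],\partial[\st\rho]) \hookrightarrow IH(\hat\Sigma)$ recalled in Section~3, the vanishing $lh = 0$ in $IH^{n-i}([\st\rho])$ is equivalent to $\langle lh, g\rangle_{\hat\Sigma} = 0$ for every $g \in IH^{n+i}([\st\rho],\partial[\st\rho])$, hence by $\cA(\Sigma)$-bilinearity of the pairing to $\langle h, lg\rangle_{\hat\Sigma} = 0$.

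Next, since $\rho$ is a simplicial ray and $\hat\Sigma$ has local product structure at it, Lemma~\ref{lem-loc-mult} lets us write every such $g$ uniquely as $g = \chi_\rho g'$ with $g' \in IH^{n+i-2}(\overline{\st\rho})$, so that $lg = \chi_\rho\, l g'$. It will therefore suffice to produce, for each $g'$, an element $\tilde g \in IH^{n-i}(\overline{\st\rho})$ with $g' = l^{i-1}\tilde g$; for then $f := \chi_\rho \tilde g$ lies in $IH^{n-i+2}([\st\rho],\partial[\st\rho]) \subseteq IH^{n-i+2}(\hat\Sigma)$ and, using that $\chi_\rho$ commutes with $l$ in $\cA(\hat\Sigma)$,
\[ lg = \chi_\rho\, l^i \tilde g = l^i f,\]
so
\[ \langle h, lg\rangle_{\hat\Sigma} = \langle l^i h, f\rangle_{\hat\Sigma} = 0\]
by the hypothesis $l^i h = 0$.

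The existence of such $\tilde g$ amounts to the surjectivity of $l^{i-1} \colon IH^{n-i}(\overline{\st\rho}) \to IH^{n+i-2}(\overline{\st\rho})$, which, via Poincar\'e duality on $[\st\rho]$ (using that source and target have equal dimension by Poincar\'e duality on the complete fan $\overline{\st\rho}$ of dimension $n-1$), is dual to the injectivity of $l^{i-1} \colon IH^{n-i+2}([\st\rho],\partial[\st\rho]) \to IH^{n+i}([\st\rho],\partial[\st\rho])$ furnished by Lemma~\ref{lem-loc-prod} with $\tau = \rho$ and $j = 1$. The main obstacle will be to verify the hypothesis of that lemma, namely that $l$ defines a Lefschetz operation on $IH(\overline{\st\rho})$: after subtracting a linear function the descent of $l$ to $\overline{\st\rho}$ is only convex, not in general strictly convex, so Theorem~\ref{thm-HL} does not apply directly. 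However $\dim \overline{\st\rho} = n-1$, so the full inductive hypothesis is available; combining Theorem~\ref{thm-complete-HL} in dimension $n-1$ with a rank-and-dimension argument along the deformation $l + \varepsilon^2 \hat l'$ for some strictly convex $\hat l'$ on $\overline{\st\rho}$, in the spirit of Lemma~\ref{lem-equiv2}, should supply the required isomorphism and complete the proof.
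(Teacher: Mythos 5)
Your approach has a fundamental gap. You are trying to prove the vanishing of $lh$ in $IH([\st\rho])$ \emph{ray by ray} using only Poincar\'e duality and Lefschetz-type injectivity. Concretely, after the Poincar\'e duality reduction you need $\langle lh, g\rangle = 0$ for \emph{every} $g$ in the dual space $IH^{n+i-2}([\st\rho],\partial[\st\rho])$, and your strategy is to realize each such $g$ as $\hat{l}^{i-1}f$ (or $l^{i-1}f$ in your notation) and pass the Lefschetz power across the pairing onto $lh$. But this only handles those $g$ lying in the image of $\hat{l}^{i-1}$ on the appropriate graded piece of $IH([\st\rho],\partial[\st\rho])$, and that image is generically a proper subspace. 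Via the isomorphism with $\overline{\st\rho}$ the required surjectivity is of
\[
\hat{l}^{i-1}\colon IH^{n-i-2}(\overline{\st\rho}) \longrightarrow IH^{n+i-4}(\overline{\st\rho}),
\]
whose source and target have dimensions $\dim IH^{n+i}(\overline{\st\rho})$ and $\dim IH^{n-i+2}(\overline{\st\rho})$ respectively (by Hard Lefschetz and Poincar\'e duality on the complete $(n-1)$-fan $\overline{\st\rho}$); unimodality of the Betti numbers forces the source dimension to be at most the target dimension, with strict inequality in general. So the map is injective but not surjective, and the duality argument cannot close. The problem is not merely the worry you flag at the end about $l$ failing to be strictly convex on $\overline{\st\rho}$ (which is also real): even if you replace $l$ by $\hat{l}$, which does descend to a strictly convex function, the surjectivity you need is for the \emph{wrong} degrees and is false.

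The paper's argument is of an entirely different character: it does not try to establish the vanishing for each $\rho$ separately, but writes $\hat{l} = \sum_{\rho\notin[\tau]} a_\rho \chi_\rho$ with $a_\rho > 0$, so that
\[
0 = Q_{\hat{l}}(lh) = \sum_\rho a_\rho\, Q_{\hat{l}}\bigl(lh|_{l\cdot IH(\overline{\st\rho})}\bigr),
\]
and then invokes the \emph{Hodge--Riemann positivity} of Theorem~\ref{thm-complete}(2) on each lower-dimensional $\overline{\st\rho}$ (inductive hypothesis). Since $lh$ restricted to each $[\st\rho]$ is still primitive for $\hat{l}$, each term of the sum has the same sign, and a sum of same-signed terms equalling zero forces each term to vanish; then HR positivity forces each restriction to vanish. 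This sign/positivity step is exactly what your duality argument cannot supply: Poincar\'e duality and Hard Lefschetz alone cannot distinguish $lh|_{[\st\rho]}$ from zero, because nonzero primitive classes can pair to zero with the image of $\hat{l}^{i-1}$. As a side note, the displayed hypothesis in the statement you were given has a typo (the operator should be $\hat{l}^{i-1}$, not $l^{i-1}$, as is clear from ``violates part (1)'' of Theorem~\ref{thm-complete}), and your degree bookkeeping inherits this; but even with those corrections the plan does not go through for the reason above.
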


\begin{proof}
We change $\hat{l}$ by a linear function so that it vanishes on $\tau$ and write it as
\[ \hat{l} = \sum_{\rho\in \hat\Sigma\setmin[\tau]} a_\rho \chi_\rho\]
for some $a_\rho>0$. Here $\chi_\rho$ are chosen as in Lemma~\ref{lem-loc-mult}.

Consider the restriction map 
\[ IH(\hat\Sigma) \to IH([\st\rho]) = IH(\overline{\st\rho}),\]
and the isomorphism 
\[ IH([\st\rho],\partial [\st\rho]) \isom \chi_\rho \cdot IH([\st\rho]).\]
These maps are compatible with the action of $\hat{l}$. 

Since $lh \in ker(\hat{l}^{i-1})$, we get 
\[ l h|_{l\cdot IH(\overline{\st\rho})} \in \prim_{\hat{l}} l\cdot IH(\overline{\st\rho}).\]
Now
\[ 0 = Q_{\hat{l}}(l\cdot h) = \langle \sum a_\rho \chi_\rho \hat{l}^{i-2} lh, h\rangle_{\hat\Sigma}  = \sum a_\rho Q_{\hat{l}}(lh|_{l\cdot IH(\overline{\st\rho})}).\]
Applying Theorem~\ref{thm-complete}(2) to $\overline{\st\rho}$ by induction on dimension, all terms in the sum have the same sign, hence they must be zero, implying that $lh|_{l\cdot IH(\overline{\st\rho})} =0$ for every $\rho$.
\end{proof}

\begin{lemma} 
Theorem~\ref{thm-complete} holds for simplicial fans $\hat\Sigma$.
\end{lemma}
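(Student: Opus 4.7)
The plan is to apply Lemma~\ref{lem-mcmullen} with $\tau = \hat\zz$, then leverage the simplicial structure and \Po duality to conclude.

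Since $\hat\Sigma$ is simplicial, every cone containing a ray $\rho$ decomposes as $\rho$ plus its opposite face, so $\hat\Sigma$ has a local product structure at every ray. Taking $\tau = \hat\zz$ in Lemma~\ref{lem-mcmullen}, every ray belongs to $\hat\Sigma \setminus [\hat\zz]$, so any $lh \in l\cdot IH^{n-i}(\hat\Sigma)$ with $\hat{l}^{i-1}(lh) = 0$ must restrict to zero in $IH([\st\rho])$ for every ray $\rho$.

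The heart of the argument is upgrading this to $lh = 0$ in $IH(\hat\Sigma)$. Because $\hat\Sigma$ is simplicial, $\cA(\hat\Sigma)$ is generated over $\RR$ by the characteristic functions $\chi_\rho$ of rays together with global linear functions; the latter vanish in $IH$, so in positive degree
\[
IH^d(\hat\Sigma) \;=\; \sum_\rho \chi_\rho \cdot IH^{d-2}([\st\rho]).
\]
By \Po duality, to prove $lh = 0 \in IH^{n-i+2}$ it suffices to check $\langle lh, g\rangle = 0$ for every $g \in IH^{n+i-2}(\hat\Sigma)$, and the relevant degree is strictly positive (the few small cases of $n+i$ being vacuous). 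Writing $g = \sum_\rho \chi_\rho g_\rho$ with $g_\rho \in IH^{n+i-4}([\st\rho])$, and noting that each $\chi_\rho g_\rho$ lies in $IH([\st\rho],\partial[\st\rho])$ since $\chi_\rho$ vanishes on $\partial[\st\rho]$, compatibility of \Po pairings under the inclusion $[\st\rho]\subset\hat\Sigma$ gives
\[
\langle lh, \chi_\rho g_\rho\rangle_{\hat\Sigma} \;=\; \langle lh|_{[\st\rho]}, \chi_\rho g_\rho\rangle_{[\st\rho]} \;=\; 0.
\]
Summing over rays yields $\langle lh, g\rangle = 0$, hence $lh = 0$.

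This shows that $\hat{l}^{i-1}\colon W^{n-i+2}\to W^{n+i}$ is injective. The two spaces have equal dimension because $l\colon IH^{n-i}\to IH^{n-i+2}$ and $l\colon IH^{n+i-2}\to IH^{n+i}$ are mutually \Po adjoint, hence of equal rank, and these ranks compute $\dim W^{n-i+2}$ and $\dim W^{n+i}$. An injection between equi-dimensional spaces is an isomorphism, establishing Theorem~\ref{thm-complete}(1) for simplicial $\hat\Sigma$. Part~(2) then follows from part~(1) via Lemma~\ref{lem-equiv2}, using Theorems~\ref{thm-RHL} and~\ref{thm-RHR} in dimension $n$, which are available by the outer induction. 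I expect the middle paragraph to be the main obstacle: deducing global vanishing of $lh$ from vanishing on each ray star relies crucially on the simplicial hypothesis, since otherwise the $\chi_\rho$ do not generate $IH^{>0}(\hat\Sigma)$; this is precisely why the nonsimplicial case must first be reduced to the simplicial one in Step~2.
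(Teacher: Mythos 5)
Your proof is correct and follows essentially the same route as the paper: apply Lemma~\ref{lem-mcmullen} with $\tau=\hat\zz$, use that $\cA(\hat\Sigma)$ is generated by the Courant functions $\chi_\rho$ of rays to deduce $lh$ pairs to zero with all of $IH(\hat\Sigma)$, and conclude $lh=0$ by nondegeneracy of \Po duality. The only difference is cosmetic: you spell out the compatibility-of-pairings step and the dimension count (equal ranks of the two adjoint maps $l$) that upgrades injectivity to isomorphism, both of which the paper leaves implicit in the line ``This proves part~(1).''
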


\begin{proof}
A simplicial fan has a local product structure at every ray $\rho$. Hence we may take $\tau=\hat\zz$ in the previous lemma and get that if $lh$ violates part (1) of the theorem then it is zero in $IH([\st\rho])$ for any ray $\rho\in\hat\Sigma$. We claim that this implies $lh$ is zero in $IH(\hat\Sigma)$. Indeed, $\cL(\hat\Sigma) = \cA(\hat\Sigma)$ is generated by $\chi_\rho$ as an $\RR$-algebra. Since the multiplication of $lh$ and $\chi_\rho$ is zero, this implies that the \Po pairing between $lh$ and any other element is zero.

This proves part (1) of Theorem~\ref{thm-complete}. Part (2) follows by Lemma~\ref{lem-equiv2}.
\end{proof}

\subsection{Reduction of Theorem~\ref{thm-complete} to the simplicial case.}

We consider a sequence of star subdivisions of $\hat\Sigma$ that gives the barycentric subdivision $b( \hat\Sigma)$. We claim that Theorem~\ref{thm-complete} for $b( \hat\Sigma)$ implies the same theorem for $\hat\Sigma$. 

With a small abuse of notation, let us assume that $\tilde{\Sigma}\to \hat\Sigma$ is one star subdivision at a cone $\tau\in\hat\Sigma$. Let the new ray be $\rho\in\tilde{\Sigma}$. We assume that $\hat\Sigma$ has a local product structure at $\tau$ and the fan $\overline{\st\tau} = \st\tau/\Span\tau$ is simplicial. This assumption holds for each star subdivison in the barycentric subdivision. We let $l$ on $\tilde{\Sigma}$ be pulled back from $\hat\Sigma$, and we define the strictly convex function $\tilde{l} = \hat{l} - \varepsilon^2 \chi_\rho$ on $\tilde{\Sigma}$, where $\varepsilon\neq 0$ is small. We claim that if $(\tilde{\Sigma},l,\tilde{l})$ satisfies Theorem~\ref{thm-complete}, then so does $(\hat{\Sigma},l,\hat{l})$.

Define a new fan $\overline{\Sigma}$ as follows. Take $[\st \tau] \subset \hat\Sigma$ and complete it by adding cones $\bar\rho + \sigma$, where $\bar\rho = -\rho$ and $\sigma \in \partial [\st \tau]$. 

Notice that $\st\rho \subset \tilde{\Sigma}$ and $\st\bar\rho\subset \overline{\Sigma}$ are isomorphic as ringed spaces over $A$, hence we may identify intersection cohomology sheaves on these spaces. Let $\hat{h} \in \cL(\hat\Sigma)$ and $\tilde{h}\in \cL(\tilde\Sigma)$ be such that  $\hat{h}=\tilde{h}$ on $\hat\Sigma\setmin\st\tau = \tilde\Sigma \setmin \st\rho$. Then we construct $\bar{h} = \beta(\hat{h},\tilde{h})\in \cL(\overline{\Sigma})$, letting $\bar{h}$ be $\hat{h}$ on $[\st\tau]$ and $\tilde{h}$  on $[\st\bar\rho]\isom [\st\rho]$. 

We apply the same procedure to construct $\bar{l} = \beta(\hat{l},\tilde{l})$ and $l = \beta(l,l)$ in $\cA^2(\overline{\Sigma})$. The resulting $l$ is convex and $\bar{l}$ is strictly convex. It is proved in \cite{Karu,BL2} that 
\[ Q_{\hat{l}} (\hat{h}) = Q_{\tilde{l}} (\tilde{h}) + Q_{\bar{l}} (\bar{h}).\]

\begin{lemma} 
Let $l \hat{h} \in l\cdot \cL(\hat\Sigma)$ be such that its class is primitive:
\[ l [h] \in \prim_{\hat{l}} l\cdot IH^{n-i}(\hat\Sigma).\]
Then there exists $l \tilde{h} \in l\cdot \cL(\tilde\Sigma)$, equal to $l \hat{h}$ on $\tilde\Sigma \setmin \st\rho$, and $l \bar{h} = \beta(l\hat{h}, l\tilde{h}) \in l\cdot \cL(\overline\Sigma)$ such that
\begin{enumerate}
  \item $l [\tilde{h}] \in \prim_{\hat{l}} l\cdot IH^{n-i}(\tilde\Sigma).$
  \item $l [\bar{h}] \in \prim_{\hat{l}} l\cdot IH^{n-i}(\overline\Sigma).$
\end{enumerate}
Moreover, if $l [\tilde{h}] = 0$ then $l [\hat{h}] = 0$.
\end{lemma}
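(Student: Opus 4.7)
The plan is to build $l\tilde h$ from the canonical lift of $l\hat h$ to $\tilde\Sigma$ and then correct it by a section supported on $\st\rho$, in order to achieve primitivity with respect to $\tilde l = \hat l - \varepsilon^2\chi_\rho$. Once $l\tilde h$ is in hand, $l\bar h$ is the gluing $\beta(l\hat h, l\tilde h)$, and primitivity on $\overline\Sigma$ will be automatic since $\bar l$ restricts to $\hat l$ on $[\st\tau]\subset\overline\Sigma$ and to $\tilde l$ on $[\st\bar\rho]\subset\overline\Sigma$. The last implication $l[\tilde h]=0 \Rightarrow l[\hat h]=0$ is handled by restricting off $\st\rho$ and then invoking Lemma~\ref{lem-loc-prod} on $[\st\tau]$.

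First I would take $l\tilde h_0 \in l\cdot\cL(\tilde\Sigma)$ to be the image of $l\hat h$ under the canonical embedding $\cL(\hat\Sigma)\hookrightarrow\cL(\tilde\Sigma)$ supplied by the decomposition theorem. Because this embedding is $A$-linear and intertwines multiplication by functions pulled back from $\hat\Sigma$, primitivity $\hat l^{i+2}(l[\hat h])=0$ immediately yields $\hat l^{i+2}(l[\tilde h_0])=0$ in $l\cdot IH(\tilde\Sigma)$. Expanding $\tilde l^{i+2}=(\hat l-\varepsilon^2\chi_\rho)^{i+2}$, the $\hat l^{i+2}$ term vanishes and every remaining summand carries a factor of $\chi_\rho$, so $\tilde l^{i+2}(l[\tilde h_0])$ lies in $\chi_\rho\cdot l\cdot IH(\tilde\Sigma) = l\cdot IH([\st\rho],\partial[\st\rho])$ via the $\chi_\rho$-isomorphism of Lemma~\ref{lem-loc-mult}. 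I would then seek $g \in \cL([\st\rho])$ of degree $n-i-4$ so that $l\tilde h := l\tilde h_0 + l\chi_\rho g$ satisfies $\tilde l^{i+2}(l[\tilde h])=0$; after cancelling a factor of $\chi_\rho$, the required equation sits in $l\cdot IH^{n+i+2}(\overline{\st\rho})$ and is solvable because $\tilde l^{i+2}: l\cdot IH^{n-i-2}(\overline{\st\rho}) \to l\cdot IH^{n+i+2}(\overline{\st\rho})$ is the Lefschetz isomorphism supplied by Theorem~\ref{thm-complete} in dimension $n-1$, available by induction.

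Next, I set $l\bar h := \beta(l\hat h, l\tilde h)$; the gluing is well-defined since $l\hat h$ and $l\tilde h$ agree on $\tilde\Sigma\setmin\st\rho$, hence on the overlap $\partial[\st\tau]=\partial[\st\bar\rho]$. The restrictions $\bar l^{i+2}(l\bar h)|_{[\st\tau]} = \hat l^{i+2}(l\hat h)|_{[\st\tau]}$ and $\bar l^{i+2}(l\bar h)|_{[\st\bar\rho]} = \tilde l^{i+2}(l\tilde h)|_{[\st\rho]}$ both lie in $m\cdot\cL$ by the primitivities already established, hence glue to an element of $m\cdot\cL(\overline\Sigma)$, giving $\bar l^{i+2}(l[\bar h])=0$. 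For the last assertion, if $l[\tilde h]=0$ then $l\tilde h\in m\cdot\cL(\tilde\Sigma)$, and restricting to $\tilde\Sigma\setmin\st\rho = \hat\Sigma\setmin\st\tau$ (where $l\chi_\rho g$ vanishes) shows $l\hat h$ is zero modulo $m$ off $\st\tau$. Hence $l[\hat h]$ lies in the image of $l\cdot IH([\st\tau],\partial[\st\tau])\to l\cdot IH(\hat\Sigma)$, and combining the compatibility of the \Po pairing with the inclusion $[\st\tau]\subset\hat\Sigma$ one sees that this map is injective; primitivity then lifts to $\hat l^{i+2}(l[\hat h]_0)=0$ in $l\cdot IH([\st\tau],\partial[\st\tau])$, and Lemma~\ref{lem-loc-prod} applied with $j=2\leq\dim\tau$ (which holds for any non-trivial star subdivision) yields injectivity of $\hat l^{i+2}$ on this space, so $l[\hat h]_0=0$ and hence $l[\hat h]=0$.

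The main obstacle is the second step: producing a correction $l\chi_\rho g$ that simultaneously preserves the restriction to $\tilde\Sigma\setmin\st\rho$ and cancels the $\tilde l^{i+2}$-image of $l[\tilde h_0]$ in $l\cdot IH(\tilde\Sigma)$. This is precisely what forces the inductive use of Theorem~\ref{thm-complete} on the $(n-1)$-dimensional fan $\overline{\st\rho}$, and it is where the strict convexity of $\tilde l$ really enters.
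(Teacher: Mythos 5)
Your proof follows the paper's strategy in outline: embed $\cL_{\hat\Sigma}\hookrightarrow\cL_{\tilde\Sigma}$, use primitivity with respect to $\hat l$ together with $\tilde l=\hat l-\varepsilon^2\chi_\rho$ to force the Lefschetz image of $l[\hat h]$ into $\chi_\rho\cdot l\cdot IH(\overline{\st\rho})$, solve for a correction supported on $\st\rho$ using Theorem~\ref{thm-complete} on the $(n-1)$-dimensional fan $\overline{\st\rho}$ by induction, and close with Lemma~\ref{lem-loc-prod}. Parts (1) and the final implication match the paper's reasoning.

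The argument for part (2), primitivity of $l[\bar h]$, has a genuine gap. You assert that because $\bar l^{\cdot}(l\bar h)$ restricts into $m\cdot\cL([\st\tau])$ and into $m\cdot\cL([\st\bar\rho])$, the two pieces ``glue to an element of $m\cdot\cL(\overline\Sigma)$.'' That inference is false in general. Already on the complete fan in $\RR$ with the two half-lines as maximal cones, the piecewise linear function $|x|$ restricts on each half-line to $\pm x\in m\cdot\RR[x]$, and yet $|x|$ is the nonzero generator of $IH^2$ and does not lie in $m\cdot\cA(\Sigma)$. Homologically: a class in $IH(\overline\Sigma)$ that dies in both $IH([\st\tau])$ and $IH([\st\bar\rho])$ need not die in $IH(\overline\Sigma)$, because the composite
\[
IH([\st\bar\rho],\partial[\st\bar\rho])\;\hookrightarrow\; IH(\overline\Sigma)\;\longrightarrow\; IH([\st\bar\rho])
\]
has kernel. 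The paper's proof sidesteps this by not gluing ``mod $m$'' at all: it observes that $\beta(l\hat h,l\hat h)\in l\cdot\cL(\overline\Sigma)$ plays relative to $\overline\Sigma$ the same role that $l\hat h$ played relative to $\tilde\Sigma$ (it is primitive away from $[\st\bar\rho]$ because $\bar l=\hat l$ there, and its Lefschetz image therefore lands in $\chi_{\bar\rho}\,l\cdot IH(\overline{\st\bar\rho})$), so the \emph{same} correction $\chi_{\bar\rho}\cdot lh$ that worked on $\tilde\Sigma$ makes $l\bar h$ primitive. You need to re-run the correction argument on $\overline\Sigma$ rather than attempt to glue primitivity from the two sides.

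A minor point: you read primitivity of $l[h]\in\prim_{\hat l}\,l\cdot IH^{n-i}(\hat\Sigma)$ as $\hat l^{\,i+2}l[h]=0$, while the paper reads it as $\hat l^{\,i}l[h]=0$, consistent with Theorem~\ref{thm-complete} under the convention that $l\cdot IH^{n-i}$ means $\{lh:h\in IH^{n-i}\}$ (a subspace of degree $n-i+2$). This is only an indexing shift and not itself an error, but it is worth reconciling before checking that the exponent in Lemma~\ref{lem-loc-prod} falls within the allowed range $j\leq\dim\tau$.
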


\begin{proof}
We embed $\cL_{\hat\Sigma} \hookrightarrow \cL_{\tilde\Sigma}$. Then considering $l[\hat{h}] \in IH(\tilde\Sigma)$, we have
\[ \tilde{l}^i l [\hat{h}]  = \tilde{l}^i l [\hat{h}] - \hat{l}^i l [\hat{h}] = (\tilde{l}^i - \hat{l}^i) l [\hat{h}] \in l\cdot IH^{n+i+2} ([\st\rho],\partial[\st\rho]) = \chi_\rho l\cdot IH^{n+i} (\overline{\st\rho}).\]
Here $\overline{\st\rho}$ is the projection of $\st\rho$ from the span of $\rho$.
Applying Theorem~\ref{thm-complete}(1) to $\overline{\st\rho}$ by induction, we know that 
\[ \tilde{l}^i: l\cdot IH^{n-i} (\overline{\st\rho})\to l\cdot IH^{n+i} (\overline{\st\rho})\]
is an isomorphism, hence we can find $lh\in l\cdot \cL^{n-i}(\overline{\st\rho})$ such that 
\[ \tilde{l}^i (l[\hat{h}] +\chi_\rho l[h]) = 0.\]
This gives $\tilde{h} = \hat{h}+\chi_\rho h$ such that $l[\tilde{h}]$ is primitive.

Clearly $\beta(l\hat{h}, l\tilde{h}) = l \beta(\hat{h},\tilde{h})$ has the form $l\bar{h}$. Let us check that its class is primitive. Recall that we considered $l\hat{h}\in l\cdot \cL(\tilde{\Sigma})$. Now $\beta(l\hat{h},l\hat{h})$ lies in $l\cdot \cL(\overline{\Sigma})$ and the construction of $l\bar{h}$ from it is the same as the construction of $l\tilde{h}$ from $l\hat{h}$, hence $l\bar{h}$ is primitive by the same reason that $l\tilde{h}$ is primitive.

Let us prove the last statement. Suppose $l[\tilde{h}] =0$. Then 
\[ l[\hat{h}]\in IH^{n-i+2}([\st\tau],\partial[\st\tau]).\]
 Assuming that $\dim\tau\geq 2$,  $\hat{l}^i$ is  injective on $IH^{n-i+2}([\st\tau],\partial[\st\tau])$ by Lemma~\ref{lem-loc-prod}.
\end{proof}

\begin{lemma}
If Theorem~\ref{thm-complete} holds for $(\tilde{\Sigma},l,\tilde{l})$, then it also holds for $(\hat{\Sigma},l,\hat{l})$.
\end{lemma}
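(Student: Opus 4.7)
The plan is to transport the test vector $l\hat h$ to analogous classes $l\tilde h$ on $\tilde\Sigma$ and $l\bar h$ on $\overline\Sigma$ via the previous lemma, apply Theorem~\ref{thm-complete}(2) on the two simpler fans, and sum the two inequalities via the additivity identity cited from \cite{Karu,BL2}. This proves part (2) of Theorem~\ref{thm-complete} for $(\hat\Sigma,l,\hat l)$, and Lemma~\ref{lem-equiv2} upgrades it to part (1).

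Given $l\hat h\in l\cdot\cL(\hat\Sigma)$ with $l[\hat h]\in\prim_{\hat l} l\cdot IH^{n-i+2}(\hat\Sigma)$, the previous lemma produces $l\tilde h\in l\cdot\cL(\tilde\Sigma)$ agreeing with $l\hat h$ off $[\st\rho]$ and $l\bar h = \beta(l\hat h,l\tilde h)\in l\cdot\cL(\overline\Sigma)$, with both $l[\tilde h]$ and $l[\bar h]$ primitive. The hypothesis of the present lemma directly yields
\[ (-1)^{(n-i)/2} Q_{\tilde l}(l\tilde h)\geq 0, \]
with equality only when $l[\tilde h]=0$. For the $\overline\Sigma$ term I would appeal to the outer induction on a measure of \emph{distance to the barycentric subdivision} set up in the preceding paragraph: $\overline\Sigma$ replaces every cone of $\hat\Sigma$ outside $[\st\tau]$ by a cone $\bar\rho+\sigma$, at which $\overline\Sigma$ has a local product structure, so $\overline\Sigma$ lies strictly closer to its barycentric refinement than $\hat\Sigma$ does; the induction hypothesis then supplies
\[ (-1)^{(n-i)/2} Q_{\bar l}(l\bar h)\geq 0,\]
with equality only when $l[\bar h]=0$.

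The additivity identity
\[ Q_{\hat l}(l\hat h) = Q_{\tilde l}(l\tilde h) + Q_{\bar l}(l\bar h), \]
proved in \cite{Karu,BL2} for the classical form $\langle \hat l^i\hat h,\hat h\rangle$ and extending to the $l\cdot IH$ setting because the intersection pairing is $\cA(\Sigma)$-bilinear and $l$ is $\beta$-compatible across the three fans, combines the two signed inequalities into $(-1)^{(n-i)/2} Q_{\hat l}(l\hat h)\geq 0$. For definiteness, equality forces both summands to vanish; in particular $l[\tilde h]=0$, and the final clause of the previous lemma (using $\dim\tau\geq 2$, automatic for a nontrivial star subdivision) then forces $l[\hat h]=0$. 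The main obstacle is the justification for invoking the induction hypothesis on $\overline\Sigma$, since $\overline\Sigma$ is not itself obtained from $\hat\Sigma$ by a single star subdivision: one must choose the inductive complexity measure so that $\overline\Sigma$ is strictly simpler than $\hat\Sigma$, exploiting that the replacement cones $\bar\rho+\sigma$ all admit a local product structure at $\bar\rho$ and so can be resolved to the barycentric subdivision by a subfamily of the star subdivisions needed for $\hat\Sigma$.
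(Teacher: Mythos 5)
Your overall architecture matches the paper's: decompose $Q_{\hat l}(l\hat h) = Q_{\tilde l}(l\tilde h) + Q_{\bar l}(l\bar h)$ via the classes produced by the previous lemma, argue both summands have the correct sign, and use the ``moreover'' clause ($l[\tilde h]=0 \Rightarrow l[\hat h]=0$) for the definiteness. The gap is entirely in how you justify the sign of the $\overline\Sigma$ term.

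You propose to invoke an ``outer induction on a measure of distance to the barycentric subdivision'' to get Theorem~\ref{thm-complete} for $(\overline\Sigma,l,\bar l)$. But $\overline\Sigma$ does not sit in the chain of star subdivisions from $\hat\Sigma$ to $b(\hat\Sigma)$ that the outer argument runs over, and no such complexity measure has actually been defined; your own writeup flags this as ``the main obstacle'' and then asserts, without proof, that one could ``choose the inductive complexity measure so that $\overline\Sigma$ is strictly simpler.'' That assertion is not established, and it is not obviously true: $\overline\Sigma$ still contains $[\st\tau]$ with the possibly non-simplicial cone $\tau$, so it is by no means clear that it is closer to being simplicial in any fixed sense. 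As written this is a genuine hole, not a routine detail.

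The paper closes it differently and more concretely: it proves part (1) of Theorem~\ref{thm-complete} for $(\overline\Sigma,l,\bar l)$ \emph{directly}, without appealing to any induction on subdivisions. Since $\overline\Sigma$ was constructed to have a local product structure at every ray outside $[\tau]$, Lemma~\ref{lem-mcmullen} applies and shows that any $l\bar g\in\ker\bar l^{\,i-1}$ vanishes in $IH(\st\bar\rho)$, hence lies in $IH^{n-i+2}([\st\tau],\partial[\st\tau])$; Lemma~\ref{lem-loc-prod} then gives injectivity of $\bar l^{\,i-1}$ on that space, forcing $l\bar g=0$. Part (2) for $\overline\Sigma$ then follows from Lemma~\ref{lem-equiv2}. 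If you want to keep your structure, replace the appeal to an undefined induction on $\overline\Sigma$ with this direct argument; otherwise the step where you claim $(-1)^{(n-i)/2}Q_{\bar l}(l\bar h)\geq 0$ is unsupported.
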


\begin{proof}
With notation as in the previous lemma, we get 
\[ Q_{\hat{l}} (l\hat{h}) = Q_{\tilde{l}} (l\tilde{h}) + Q_{\bar{l}} (l \bar{h}).\]
If both terms on the right hand side have the correct signs then so does the left hand side. The first term on the right hand side has the correct sign by the assumption that 
Theorem~\ref{thm-complete} holds for $(\tilde{\Sigma},l,\tilde{l})$. 
For the second term we need to show that Theorem~\ref{thm-complete} also holds for $(\overline{\Sigma},l,\bar{l})$. It suffices to prove part (1) of Theorem~\ref{thm-complete} for $(\overline{\Sigma},l,\bar{l})$. If 
\[ l \bar{g} \in \ker \bar{l}^{i-1}: l\cdot IH^{n-i} (\overline{\Sigma}) \to l\cdot IH^{n+i-2} (\overline{\Sigma}),\]
then by Lemma~\ref{lem-mcmullen}, $l\bar{g}=0$ in $IH(\st\bar\rho)$, hence $l\bar{g}\in IH^{n-i+2}([\st\tau],\partial[\st\tau])$. By Lemma~\ref{lem-loc-prod}, the map
\[ \bar{l}^{i-1}: IH^{n-i+2} ([\st\tau],\partial[\st\tau]) \to IH^{n+i}([\st\tau],\partial[\st\tau])\]
is injective, hence $l\bar{g}=0$.
\end{proof}

\bibliographystyle{plain}
\bibliography{relHL}

\end{document}